\documentclass[reqno,12pt,twoside]{amsart} 

\usepackage{amsmath,amsthm,amssymb,amsfonts,mathrsfs,color}
\usepackage{latexsym,esint}
\usepackage{mathrsfs}  
\usepackage[colorinlistoftodos,prependcaption,textsize=tiny]{todonotes}
\usepackage{hyperref}
\usepackage{enumitem}
\usepackage{esint}
\usepackage{mathtools}
\usepackage{pgf,tikz}
\usepackage{thmtools}
\usepackage{tkz-euclide}
\usetikzlibrary{arrows,patterns,calc,babel}

\newtheorem{theorem}{Theorem}[section]
\newtheorem{proposition}{Proposition}[section]

\newtheorem{lemma}{Lemma}[section]
\newtheorem{corollary}{Corollary}[section]
\newtheorem{definition}{Definition}[section]

\numberwithin{equation}{section} \numberwithin{theorem}{section}
\numberwithin{proposition}{section} \numberwithin{lemma}{section}
\numberwithin{corollary}{section}
\numberwithin{definition}{section} \numberwithin{remark}{section}

\newcommand{\C}{{\mathcal C}}

\def\Xint#1{\mathchoice
   {\XXint\displaystyle\textstyle{#1}}%
   {\XXint\textstyle\scriptstyle{#1}}%
   {\XXint\scriptstyle\scriptscriptstyle{#1}}%
   {\XXint\scriptscriptstyle\scriptscriptstyle{#1}}%
   \!\int}
\def\XXint#1#2#3{{\setbox0=\hbox{$#1{#2#3}{\int}$}
     \vcenter{\hbox{$#2#3$}}\kern-.5\wd0}}

\def\dashint{\Xint-}

\author[M. Rakovsky]{ Martin Rakovsky}
\address[M. Rakovsky]{Universit\'e Paris-Saclay, CNRS,  Laboratoire de math\'ematiques d'Orsay, 91405, Orsay, France.}
\email{martin.rakovsky@ens-paris-saclay.fr}

\title[]{Existence, uniqueness and regularity of solutions to the parabolic Ambrosio-Tortorelli system}

\begin{document}

\begin{abstract}
%We analyze the initial-boundary value problem corresponding to the gradient flow of the Ambrosio–Tortorelli functional, establishing existence, uniqueness, and regularity results in arbitrary dimension.
We investigate the existence, uniqueness, and regularity of the gradient flow of the Ambrosio–Tortorelli functional, viewed as an initial-boundary value problem, in arbitrary dimension.
%We investigate the existence, uniqueness, and regularity of the initial-boundary gradient flow $(u,v)$ of the Ambrosio-Tortorelli functional in arbitrary dimension. 
For any initial data, using a time-discrete Euler scheme, we establish the existence of a weak gradient flow satisfying a maximum principle. We also identify a functional space in which uniqueness holds. We further show that such gradient flow is smooth in the interior of the space-time domain. Under additional assumptions on the initial data, the regularity of the boundary of the domain, we prove optimal regularity for the solution, up to the space-time boundary.
\end{abstract}

\maketitle

%\tableofcontents

\section{Introduction}

The Mumford--Shah functional was introduced by D.~Mumford and J.~Shah in 1989 in their celebrated paper \cite{MS89}, in the context of image segmentation. It was later given a meaningful interpretation in fracture mechanics to describe brittle fractures in elastic materials (see \cite{FM98}). If $\Omega$ is a bounded open subset of $\mathbb{R}^d$, it can be defined for functions in $SBV^2(\Omega)$, the space of Special Bounded Variation functions, defined by 

\begin{multline*}
SBV^2(\Omega) = \{u \in L^1(\Omega), Du = \nabla u \mathcal{L}^d + (u^+ -u^-)\nu_u \mathcal{H}^{d-1}_{\rvert J_u},\\
\nabla u \in L^2(\Omega; \mathbb{R}^d), \mathcal{H}^{d-1} (J_u) < +\infty \},
\end{multline*}
where $\mathcal{L}^d$ is the Lebesgue measure, $\mathcal{H}^{d-1}$ the $d-1$-Hausdorff measure, $\nabla u$ is the approximate gradient of $u$, the set $J_u$ is the jump set of $u$, the vector $\nu_u$ is the approximate normal to $J_u$ and $u^+$ and $u^-$ are the approximate one-sided limits across $J_u$. For any $u \in SBV^2(\Omega)$, the Mumford Shah functional is defined by 
\[
MS(u) = \int_\Omega |\nabla u|^2 \, dx +  \mathcal{H}^{d-1}(J_u).
\]
For an elastic material, the function $u$ represents the displacement and $J_u$ denotes a part of the crack set.
%Describing the time propagation of a brittle fracture which respects amounts to study local minimizers or critical points of the Mumford-Shah functional at each time, a notion that is not canonical considering the set of admissible cracks. 
Computing local minimizers of the Mumford--Shah functional numerically is challenging, as the geometric properties of the discontinuity set $J_u$ are part of the unknown.

A possible approach to the (local) minimization of $MS$ is to use a steepest descent method, which amounts to determining the gradient flow of $MS$. While defining a suitable notion of gradient for $MS$ is delicate, a generalized gradient flow can be constructed via time discretization using an implicit Euler scheme. Letting the time step tend to zero, any limit curve is called a minimizing movement for the Mumford--Shah functional (see \cite{DG93}, \cite{A95}, \cite{CD97}). 

\smallskip

The Mumford-Shah functional admits various variational regularization in $L^2(\Omega)$ using De Giorgi’s $\Gamma$-convergence (see \cite{AT90}, \cite{AT92}, \cite{G98}).
%, where this approximation is understood in the sense of the Gamma-convergence developed by De Giorgi. 
The most common regularization of the Mumford-Shah functional is probably the Ambrosio and Tortorelli functional, proposed in \cite{AT92} and defined for $(u,v) \in (H^1(\Omega) \times [H^1(\Omega)\cap L^\infty(\Omega)])^2$ by 
\begin{equation}\label{AT}
AT_\varepsilon (u,v) = \int_\Omega (\eta_\varepsilon + v^2) |\nabla u|^2 dx + \int_\Omega \left(\varepsilon |\nabla v|^2 + \frac{(1-v)^2}{\varepsilon}\right)dx
\end{equation}
where $0 < \eta_\varepsilon \ll \varepsilon$ is a small parameter ensuring ellipticity. This approximation follows the spirit of the Allen--Cahn model for phase transitions (\cite{M87}, \cite{S88}) and forms the basis for many numerical simulations (see \cite{BFM08}).
The phase-field variable $v$ can be interpreted mechanically as a damage variable taking values in $[0,1]$, where $\{v=1\}$ corresponds to a sane material and $\{v=0\}$ to a fully damaged state. The existence of minimizers for $AT_\varepsilon$ follows from the direct method in the calculus of variations. Using the $\Gamma$-convergence result of \cite{AT92}, one can approximate a global minimizer of $MS$ by considering a convergent sequence $(u_\varepsilon, v_\varepsilon)$ of minimizers of $AT_\varepsilon$.  However, this approach provides no information about local minimizers. Extensions of the Gamma-convergence result to general critical points of $AT_\varepsilon$ can be found in \cite{FLS09}, \cite{BMRa23}, \cite{BRR26}. 

\medskip

Similarly to \cite{G98}, where a gradient flow for the one-dimensional Mumford--Shah functional was obtained via a non local approximation, the approach proposed in \cite{FP04} consists in deriving a gradient flow for $MS$ as the limit, as $\varepsilon \to 0$, of the gradient flows of $AT_\varepsilon$. The gradient flow of $AT_\varepsilon$ with respect to $L^2$ is defined as a pair $(u,v)$ in $\big[L^\infty((0,+\infty); L^2(\Omega)) \cap L^2((0,+\infty); H^1(\Omega))\big]^2$
satisfying
\begin{equation}\label{systeme départ intro}
\left\{
\begin{array}{ll}
\partial_t u - \mathrm{div}\big((\eta_\varepsilon + v^2) \nabla u\big) = 0 & \text{in } (0,+\infty) \times \Omega,\\
\partial_t v - \varepsilon \Delta v + \dfrac{v-1}{4\varepsilon} + v |\nabla u|^2 = 0 & \text{in } (0,+\infty) \times \Omega,\\
(u,v) = (g,1) & \text{on } (0,+\infty) \times \partial\Omega,\\
(u(0,\cdot), v(0,\cdot)) = (u_0, v_0) & \text{in } \Omega.
\end{array}
\right.
\end{equation}
The same approach was used in \cite{BM14} to obtain a unilateral gradient flow for $MS$, incorporating an irreversibility condition on the crack, which can only grow in time. 
%This irreversibility constraint is also addressed in \cite{G98}. 
Consequently, studying the $L^2$ gradient flow of $AT_\varepsilon$ becomes a center of interest in order to understand the gradient flow of $MS$.

\smallskip

The existence of weak solutions to the gradient flow system \eqref{systeme départ intro} was investigated in \cite{FP04}, where a uniqueness class for strong solutions was also identified, in dimensions $2$ and $3$. The existence of a strong gradient flow in dimension $2$ was studied in \cite{Cortopassi_2024} using a Galerkin spatial discretization. 
The main difficulty in proving the existence of weak solutions for the gradient flow of $AT_\varepsilon$ lies in the presence of a quadratic gradient term in the nonlinear part, which makes standard compactness arguments inefficient (see \cite{E90}). In the Galerkin method, the space of test functions allowed in the Galerkin method is dense in $D(\Omega)$ only for the $H^1$ topology, which is not enough to pass to the limit the term $v|\nabla u|^2$ without stronger estimates on $|\nabla u|^2$. The issue raised by the presence of a quadratic gradient is also discussed in \cite{X94} in the context of flows of incompressible fluids with temperature-dependent viscosity, and is overcome by using a time-discretization approach. We adopt the same approach in this work.  

\smallskip

The time-discretization method was also used in \cite{BM14} to construct a gradient flow for the variable $u$, coupled with an obstacle condition on the damage variable $v$ that enforces irreversibility of the evolution (see also \cite{G05}). 
%This approach is particularly well suited for numerical implementations, as it bypasses the lack of convexity of the biquadratic nonconvex term $v^2 |\nabla u|^2$, which otherwise prevents uniqueness in the standard minimization procedure.

\smallskip

In this paper, we prove the existence of a weak gradient flow for $AT_\varepsilon$ with initial--boundary conditions in any dimension. The existence is established in \autoref{Section existence faible}. In view of applications to the study of a gradient flow for the Mumford--Shah functional, we also derive a uniform energy bound and a maximum principle satisfied by the constructed weak solution $(u,v)$. We further provide, in \autoref{Section unicité}, a uniqueness condition for a ``strong '' gradient flow, which depends on the dimension. Under suitable boundary conditions, the solutions of the weak gradient flow appear to be regular up to the boundary, as shown in \autoref{section régularité}.

\smallskip

The optimal regularity of the elliptic counterpart of \eqref{systeme départ intro} was established in \cite{BMRa23}, relying on fine elliptic estimates involving Morrey--Campanato spaces, with the same method as \cite{R93} for harmonic maps with value in a revolution torus. To adapt this approach to the parabolic setting, we use arguments from \cite{L96}, completed by the method of \cite{Yin_1997}. The main argument is to prove Hölder regularity for the pair $(u,v)$. De Giorgi-Nash-Moser theorem provides the Hölder regularity for the function $u$. Using the first equation of \eqref{systeme départ intro}, we deduce a Campanato--Morrey estimate on $\nabla u$ (see \autoref{prop:grad_spat_Morrey}). The second equation of \eqref{systeme départ intro} becomes a heat equation with a source term in a Morrey space. In the elliptic case, this part is a direct consequence of De Giorgi-Nash Moser theorem. To prove that $v$ is Hölder, according to a parabolic Campanato lemma (see \cite[Lemma 4.3]{L96}), it is sufficient to prove a decay estimate on the $L^2$ norm of $v$ minus its average on parabolic cylinders. To obtain this estimate, we consider $w$ a ``heat ''-extension of $v$. While $w$ satisfies the desired estimate, the distance of the error $v-w$ to its mean value can be controled using a Poincare-type inequality for solutions of a heat equation with a source term in a Morrey space (see \autoref{prop : Poincare}). 
%, we are able to connect the Hölder regularity of $u$ with a  In particular, adapting the elliptic strategy requires a Caccioppoli-type inequality controlling the decay of the spatial gradient $\nabla u$, as well as a Poincaré-type inequality controlling the distance of $u$ to its parabolic mean by the $L^2$-norm of its gradient. 
The latter does not hold in general for functions in $L^2((0,+\infty); H^1(\Omega))$ and relies on the shape of the equation satisfied by the pair $(u,v)$.

\smallskip

While the method of \cite{X94} relies strongly on the specific structure of the system considered, the method presented in \autoref{section régularité} can be adapted to other parabolic system with a source term in $L^1$. In particular, a similar regularity result can be expected for the gradient flow of the Ambrosio-Tortorelli functional with a fidelity term, which is the functional studied in \cite{Cortopassi_2024} in the special case of a time-independent fidelity term. Another type of system in which this approach could provide regularity is the time evolution for a harmonic map flow with value in a revolution torus, for which the regularity in the elliptic case is proved in \cite{R93}.

\smallskip   

We close this introduction with some notations and definitions used in the rest of the paper. 

%The $\Gamma$-convergence approach developed by De Giorgi enables to get around this difficulty by considering a variational approximation of the energy. 

%The numerical implementation of this minimization method still requires some extra care, as the lack of convexity of the term $v^2|\nabla u|^2$ makes the classical gradient method fail to converge to a global or even a local minimizer. Another approach to the minimization of $MS$ would be to 

\medskip

\textbf{Standard notations.}
In the rest of the article, $d\geqslant 1$ is a positive integer. Let $B_R(x_0)$ be the ball of radius $R$ centered at $x$. We denote by $\omega_d$ the Lebesgue measure of $B_1(0)$. For any open subset $\Omega\in \mathbb{R}^d$ and $p\geqslant 1$, for a measurable function $f: \Omega \rightarrow \mathbb{R}$, we denote by 

\[\|f\|_{\infty, \Omega} = \sup_{x\in \Omega} |f(x)|, \quad \|f\|_p = \left(\int_{\Omega} |f|^p\right)^{1/p}, \quad \|f\|_{H^1} = \left(\int_{\Omega} |f(x)|^2 + \int_{\Omega} |\nabla f|^2\right)^{1/2}.\]

We also use the notation $\lfloor x\rfloor$ for the largest integer smaller than $x$ and $\displaystyle\binom{n}{a,b,c} = \dfrac{n!}{a!b!c!}$ for the multinomial coefficient. 

\medskip

\textbf{Parabolic framework.}
We introduce some tools from the theory of parabolic equations. To take into account the specificities of the parabolic framework, we introduce parabolic cylinders, which are analogous to open balls in the elliptic framework: if $X_0=  (t_0, x_0) \in \mathbb{R} \times \mathbb{R}^d$ and $R >0$, the parabolic cylinder $Q_R(X_0)$ or $Q_R(t_0, x_0)$ is defined by

\[Q_R(X_0) = (t_0 -R^2, t_0] \times B_R(x_0) \subset \mathbb{R}\times \mathbb{R}^d.\]
While $Q_R(X_0)$ is not an open set, it is the convenient set that fits the translation invariance properties of the parabolic equations. For any interval $I \subset \mathbb{R}$ and $\Omega \subset \mathbb{R}^d$, we call the parabolic boundary the set 

\[\partial_{\sqcup} (I \times \Omega) := (I \times \partial \Omega) \cup (\inf I \times \Omega).\]
In particular, $\partial_\sqcup Q_R(t_0,x_0) =((t_0-R^2, t_0) \times \partial B_r(x_0))\cap (\{t_0 -R^2\} \times B_R(x_0))$.   

\smallskip

The parabolic distance $d_p$ on $\mathbb{R}\times \mathbb{R}^d$ is defined by

\[d_p((t,x) ; (s, y)) = |t-s|^{1/2} + |x-y| \quad  \text{for all } (t,x),(s,y) \in \mathbb{R}\times \mathbb{R}^d.\]
The regularity of solutions of parabolic equations is studied in terms of the oscillation of $u$ in parabolic cylinders and is naturally described in terms of parabolic Hölder spaces. For any real number $\alpha\in (0,1)$, the Hölder space $\mathcal{C}_{par}^{\alpha/2, \alpha}(I\times \Omega)$ is defined as 

\[\mathcal{C}_{par}^{\alpha/2, \alpha}(I\times \Omega) = \left\{ f \in \mathcal{C}^0 (I\times \Omega), \sup_{X\neq Y \in I\times \Omega} \frac{|f(X) -f(Y)|}{d_p(X,Y)^\alpha} < \infty\right\}.\]
The above quantity defines a semi-norm for $f$, also denoted by

\[[f]_{\alpha , I\times \Omega} = \sup_{X\neq Y \in I\times \Omega} \frac{|f(X) -f(Y)|}{d_p(X,Y)^\alpha},\]
so that the natural norm associated to $C_{par}^{\alpha/2, \alpha}(I\times \Omega)$ is denoted by 

\[\|.\|_{\alpha} = \|.\|_{\infty , I\times \Omega} + [.]_{\alpha , I\times \Omega}.\]
The notion of parabolic space can be extended to parameters larger than $1$. For any nonintegral positive number $\ell$, we define the space $\mathcal{C}_{par}^{\ell/2, \ell}(I \times \Omega)$ as the space of continuous functions which admit derivatives $\partial_t^s \partial_x^r$ for any integers $s$ and $r$ such that $2r+s<\ell$. More precisely,  

\begin{equation}\label{def : Ckl}
\mathcal{C}_{par}^{\ell/2, \ell}(I \times \Omega) = \left\{f \in \mathcal{C}(I \times \Omega),  \sum_{2r+s \leqslant \lfloor \ell\rfloor} \|\partial_t^r \partial_x^s f\|_{\infty, I \times \Omega} + \sum_{2r+s = \lfloor \ell\rfloor} [\partial_t^r \partial_x^s f]_{\ell - \lfloor \ell \rfloor, I\times \Omega}  < +\infty \right\}.
\end{equation}  
Observe that a function $f\in \mathcal{C}_{par}^{\alpha/2, \alpha}((0,T)\times \Omega)$ is $\alpha/2-$Hölder with respect to the time variable and $\alpha-$Hölder with respect to the space variable, following the idea that a solution to a parabolic solution is twice as regular in the space variable than in the time variable.  

\smallskip

Finally, for a parabolic cylinder $Q_R(t_0,x_0)$ and a function $f\in L^1(Q_R(t_0,x_0))$, the average of $f$ on $Q_R(t_0,x_0)$ is denoted by 

\[\{f\}_{R, t_0,x_0} := \frac{1}{|Q_R(t_0,x_0)|} \iint_{Q_R(t_0,x_0)} f = \frac{1}{\omega_d r^{d+2}} \iint_{Q_R(t_0,x_0)} f.\]

\medskip

%\textbf{Weak solutions of a parabolic equation.}
%We specify the definition of a weak solution of a parabolic equation. We use the definition of \cite[Definition 8]{I26}. 
%
%\begin{definition}\label{def : sol faible parabolique}
%Let $I$ be an interval and $\Omega\subset \mathbb{R}^d$ be an open set. Let $X$ be a metric space (either $L^p(\Omega)$ or $H^{-1}(\Omega)$) and let $X_0^\star = \overline{\mathcal{C}_c^{\infty}}^{\|.\|_{X^\star}}$. Let $g \in L^2(I ; X)$ and 
%let $A : \mathbb{R}^d \rightarrow \mathbb{M}_d (\mathbb{R})$ be a bounded measurable function.  
%
%\smallskip
%
%A function $f$ is a \textit{weak} solution solution of the equation $\partial_t f - \text{div}(A \nabla f) = g$ if 
%
%\[f \in \mathcal{C}(I ; L^2(\Omega)), \quad \nabla f \in L^2(I \times \Omega), \quad \partial_t f \in L^2(I ; H^{-1}(\Omega)),\]
%
%and for every function $\varphi \in H^1(I \times \Omega) \cap L^2(I ; X_0^\star)$, 
%
%\[\int_I \int_\Omega f(t,x) \partial_t \varphi(t,x)dxdt + \int_I \int_\Omega A(x) \nabla f (t,x) \cdot \nabla \varphi(t,x) dxdt =  \int_I \langle g(t) ,\varphi(t) \rangle_{X_0^\star, X} dt.\] 
%\end{definition}
%
%For a detailed review of parabolic equations, we refer to \cite{L96}. 
%
%\medskip

\textbf{Parabolic Morrey spaces.}
Given $I \subset \mathbb{R}$ an interval and $\Omega \subset \mathbb{R}^d$, $p\geqslant 1$ and $\alpha \in (0,1)$, we define the space of Morrey functions with parameters $p, \alpha$ by 

\[\mathcal{L}^{p,d +\alpha} (I \times \Omega) = \left\{ f\in L^p(I ; L^p(\Omega)) ; \sup_{Q_\rho(t_0,x_0) \subset I \times \Omega} \frac{1}{\rho^{d+\alpha}}\iint_{Q_\rho(t_0,x_0)} |f(t,x)|^p dx\, dt < +\infty\right\}.\]
The space $\mathcal{L}^{p,d +\alpha} (I \times \Omega)$ can be endowed with the norm 

\[\|f\|_{\mathcal{L}^{p,d+\alpha}} = \sup_{Q_\rho(t_0,x_0) \subset I \times \Omega} \left(\frac{1}{\rho^{d+\alpha}}\iint_{Q_\rho(t_0,x_0)} |f(t,x)|^p dx\, dt\right)^{1/p}.\]

%\cite{FP04}, \cite{Cortopassi_2024}, \cite{DB93}, \cite{G18}, \cite{E10}, gobino
%
%Pourquoi Scheven ? Unicité ?  
%Preuve de l'unicité, truc bizarre. 
 
\section{Statement of the results}

In the sequel, we consider an integer $d\geqslant 1$ and $\Omega \subset \mathbb{R}^d$ a bounded open domain with a Lipschitz boundary $\partial\Omega$. The sequence $(\eta_\varepsilon)_{\varepsilon >0}$ is a sequence of positive numbers such that $\eta_\varepsilon \ll\varepsilon$.  
The Ambrosio-Tortorrelli functional is defined as \eqref{AT}.
Let $u_0, v_0 ,g$ be three functions satisfying the following conditions: 

\medskip

\begin{enumerate}[label=(H.\arabic*)]
\item $(u_0,v_0) \in [H^1(\Omega) \cap L^\infty(\Omega)]^2$ and $g\in H_{loc}^1 ([0,\infty) ; H^1(\Omega))\cap L^\infty([0,\infty)\times \Omega)$,
\item $g(0,\cdot) = u_0$ and $0 \leqslant v_0 \leqslant 1$ almost everywhere on $\Omega$,

\item $(u_0 -g(0,\cdot), v_0 -1) \in H_0^1 (\Omega)$,  
\item $g(t)\in L^\infty(\partial\Omega) \cap H^{1/2}(\partial\Omega)$ for almost every $t\in (0,+\infty)$.
\end{enumerate}

\medskip
% $ be a pair such that $0 \leqslant v_0 \leqslant 1$, and let $g\in H^1((0,\infty) ; H^1(\Omega))$ be such that $g(0,\cdot) = u_0$ on $\overline{\Omega}$. We also suppose that $g(t)\in L^\infty(\partial\Omega) \cap H^{1/2}(\partial\Omega)$ for any $t\in (0,+\infty)$. 
The system of equations corresponding to the gradient flow of the functional $AT_\varepsilon$ writes as : 

\begin{equation}\label{système de départ}
\left\{\begin{array}{ll}
\partial_t u - \text{div} ((\eta_\varepsilon + v^2) \nabla u) =0 & \text{on} \, (0,+\infty) \times \Omega\\
\partial_t v -  \varepsilon \Delta v + \frac{v-1}{4\varepsilon} + v |\nabla u|^2 =0 &\text{on} \, (0,+\infty) \times \Omega\\
(u,v) =(g,1) &\text{on} \, (0,+\infty) \times \partial\Omega\\
(u(0,\cdot ), v(0, \cdot)) = (v_0, u_0)& \text{on} \, \Omega\\
\end{array}
\right.
\end{equation}

We define the following notion of weak solution for the system \eqref{système de départ} : 

\begin{definition}\label{solution faible}
Let $u_0,v_0,g$ be functions satisfying conditions $(H.1), (H.2), (H.3)$ and $(H.4)$. Denote by 
\begin{equation}\label{notation : F}
\mathcal{F} = [L_{loc}^2([0,+\infty);H^1(\Omega))\cap H_{loc}^1([0,+\infty) ; L^2(\Omega))\cap L^\infty([0,+\infty)\times \Omega)]^2
\end{equation}
A pair $(u,v)\in \mathcal{F}$ is a weak solution of \eqref{système de départ} if for every every $\varphi\in \mathcal{C}_c^\infty ((0,+\infty)\times \Omega)$, 

\begin{equation}\label{eq : eq in u}
\int_0^{+\infty} \int_\Omega \left[\partial_t u(t,x) \varphi(t,x) + (\eta_\varepsilon + v^2) \nabla u (t,x) \nabla \varphi(t,x)\right] dxdt = 0,
\end{equation}
and 
\begin{multline}\label{eq : eq in v}
\int_0^{+\infty} \int_\Omega \left[\partial_t v(t,x) \varphi(t,x) +  \varepsilon \nabla v(t,x)\cdot  \nabla \varphi(t,x) +\frac{v(t,x) - 1}{4\varepsilon} \varphi(t,x)\right.\\
+ v(t,x) |\nabla u(t,x)|^2 \varphi(t,x)\bigg] dxdt =0. 
\end{multline}
\end{definition}

Our first main statement, which is proved in \autoref{Section existence faible}, is the following : 

\begin{theorem}\label{Theorem 1}
%Let $(u_0, v_0) \in [H^1(\Omega) \cap L^\infty(\Omega)]^2$ be a pair of functions such that $0\leqslant v_0 \leqslant 1$ on $\Omega$. Let $g\in H_{loc}^2((0,\infty) ; H^1(\Omega))$ be such that $g(0,\cdot) = u_0$ on $\overline{\Omega}$.
Let $u_0,v_0,g$ be functions satisfying conditions $(H.1), (H.2), (H.3)$ and $(H.4)$. 
%We also suppose that $g(t)\in L^\infty(\partial\Omega) \cap H^{1/2}(\partial\Omega)$ for any $t\in (0,+\infty)$.

\smallskip

The system \eqref{système de départ} admits a weak solution $(u,v)$ in the sense of \autoref{solution faible}. Moreover, the pair $(u,v)$ satisfies the following uniform energy bound for every $t >0$, 

\begin{multline}\label{energy bound thm 1}
AT_\varepsilon (u(t), v(t)) + \int_0^t \left(\|\partial_t u(s)\|_2^2 +\|\partial_t v(s)\|_2^2\right)ds\\
\leqslant AT_\varepsilon (u_0, v_0) +\int_0^t \|\partial_t g (s)\|_2^2 ds + \int_0^t \int_\Omega (\eta_\varepsilon + v(s)^2) |\partial_t \nabla g(s)|^2 dx ds\\
 + 2 \int_0^t \int_\Omega (\eta_\varepsilon + v(s)^2) \nabla u(s) \cdot \nabla (\partial_t g(s))dx ds,
\end{multline}
and it satisfies the maximum principle 

\begin{equation}\label{eq : principe maximum}
0\leqslant v\leqslant 1, \quad |u| \leqslant \max(\|g\|_{\infty, [0,+\infty) \times \mathbb{R}^d}, \|u_0\|_{\infty , \Omega}) \, \text{a.e on } (0,+\infty)\times \Omega .
\end{equation}
\end{theorem}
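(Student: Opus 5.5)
We construct the solution by a time-discrete implicit Euler (minimizing movement) scheme, following the strategy of \cite{X94} and \cite{BM14}. Fix a time step $\tau>0$, set $t_k=k\tau$, $g_k=g(t_k)$ and $(u^0,v^0)=(u_0,v_0)$. Given $(u^{k-1},v^{k-1})$, we look for $(u^k,v^k)$ in the affine space $(g_k,1)+[H_0^1(\Omega)]^2$. Minimizing jointly
\[
(u,v)\mapsto AT_\varepsilon(u,v)+\frac{1}{2\tau}\|u-u^{k-1}\|_2^2+\frac{1}{2\tau}\|v-v^{k-1}\|_2^2
\]
would lose the structure we need, because of the nonconvex term $v^2|\nabla u|^2$. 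Instead we use a semi-decoupled scheme. We first take $u^k$ as the unique minimizer of the strictly convex problem
\[
\int_\Omega(\eta_\varepsilon+(v^{k-1})^2)|\nabla u|^2+\frac1{2\tau}\|u-u^{k-1}\|^2 .
\]
Then we take $v^k$ as the unique minimizer of the convex problem in $v$, with $|\nabla u^k|^2$ frozen. The Euler--Lagrange equations are the discrete versions of \eqref{eq : eq in u}, \eqref{eq : eq in v} with $v^2$ replaced by $(v^{k-1})^2$. The maximum principle \eqref{eq : principe maximum} holds at each step by truncation: replacing $u^k$ by its truncation at level $M=\max(\|g\|_\infty,\|u_0\|_\infty)$ decreases the energy and keeps the boundary datum. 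Likewise, replacing $v^k$ by $\min(\max(v^k,0),1)$ decreases every term, since $(1-v)^2$ and $v^2|\nabla u^k|^2$ both decrease. Uniqueness of the minimizers then gives the bounds.

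Next we derive the discrete energy inequality. We test the $u$-equation with $u^k-u^{k-1}-(g_k-g_{k-1})\in H_0^1$, and use the minimality of $v^k$ against the competitor $v^{k-1}$. Summing, we telescope $AT_\varepsilon(u^k,v^k)-AT_\varepsilon(u^{k-1},v^{k-1})$, where the coefficient mismatch $(v^{k-1})^2$ versus $(v^k)^2$ is absorbed by the $v$-minimality, which is evaluated at $u^k$. This produces the discrete analogue of \eqref{energy bound thm 1}; the boundary terms involving $g_k-g_{k-1}$ are handled by Young's inequality. Define the piecewise affine interpolants $(\hat u_\tau,\hat v_\tau)$ and the piecewise constant interpolants $(\bar u_\tau,\bar v_\tau)$. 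The energy bound then gives, uniformly in $\tau$ on every $[0,T]$, bounds in $L^\infty(0,T;H^1)$ and in $H^1(0,T;L^2)$, together with the $L^\infty$ bounds. By Aubin--Lions we extract a subsequence converging strongly in $C([0,T];L^2)$, weakly in $L^2H^1$, and weakly-$*$ in $L^\infty$. Passing to the limit in the $u$-equation is then standard, since $(\bar v_\tau^{\,\text{shifted}})^2\to v^2$ strongly in every $L^p$ with $p<\infty$.

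The main obstacle is the term $v|\nabla u|^2$ in \eqref{eq : eq in v}, which requires strong convergence of $\nabla\bar u_\tau$ in $L^2_{loc}$. Our plan is to compare energy identities. We test the discrete $u$-equation with $(\bar u_\tau-g)\varphi$ and the limit equation with $(u-g)\varphi$; this is legitimate because $u\in L^2H^1$ and $\partial_tu\in L^2L^2$. Using the $H^1(0,T;L^2)$ compactness to handle the time-derivative terms, we obtain
\[
\limsup\iint(\eta_\varepsilon+\bar v_\tau^2)|\nabla\bar u_\tau|^2\varphi\le\iint(\eta_\varepsilon+v^2)|\nabla u|^2\varphi .
\]
Together with weak lower semicontinuity and the uniform ellipticity $\eta_\varepsilon>0$, this gives $\nabla\bar u_\tau\to\nabla u$ strongly in $L^2_{loc}$. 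Hence $\bar v_\tau|\nabla\bar u_\tau|^2\to v|\nabla u|^2$ in $L^1_{loc}$, which is enough against smooth $\varphi$. The remaining parts follow directly. The maximum principle passes to the a.e.\ limit. For \eqref{energy bound thm 1}, we use lower semicontinuity of $AT_\varepsilon$ and of the $L^2$ norms of the time derivatives, while the right-hand side converges thanks to the strong convergence of $\bar v_\tau$ and the weak convergence of $\nabla\bar u_\tau$ against $\nabla\partial_tg$. This yields the bound for a.e.\ $t$, and then for every $t$ by time continuity of $(u,v)$ in $L^2$ combined with lower semicontinuity.
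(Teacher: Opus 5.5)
Your proof is correct, but it rests on a different discrete scheme from the paper's. The paper minimizes jointly, $(u_i,v_i)\in\mathrm{Argmin}\{AT_\varepsilon(u,v)+\frac1\delta\int_\Omega(|u-u_{i-1}|^2+|v-v_{i-1}|^2)\}$ over $\mathcal{A}_{g_i}$. It gets the discrete energy inequality in one line from the competitor $(u_{i-1}+g_i-g_{i-1},v_{i-1})$, and its Euler--Lagrange system is fully implicit.

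Your claim that joint minimization ``would lose the structure we need'' is not right. Truncation still works: every joint minimizer satisfies the bounds, because truncating on a set of positive measure strictly lowers the energy. The only thing lost is uniqueness of each step, which the existence proof never uses.

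What your alternating scheme buys is strictly convex subproblems with unique minimizers. You also correctly see why the energy still telescopes: the $v$-step is compared with $v^{k-1}$ at the frozen $u^k$, so $\int_\Omega (v^{k-1})^2|\nabla u^k|^2$ cancels. The price is the lagged coefficient in the $u$-equation. Both the limit in that equation and the identification of $\lim\iint(\eta_\varepsilon+\bar v_\tau(\cdot-\tau)^2)|\nabla\bar u_\tau|^2\varphi$ involve the shifted interpolant, not $\bar v_\tau$ as in your display. This is harmless, since $\|\bar v_\tau-\bar v_\tau(\cdot-\tau)\|_{L^2(0,T;L^2)}\leqslant C\tau$ by the $H^1(0,T;L^2)$ bound.

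The remaining differences are minor. You use Aubin--Lions instead of Ascoli plus Rellich. You identify the limit of the weighted Dirichlet energy through the limit $u$-equation tested with $(u-g)\varphi$, whereas the paper tests the discrete equation with $(\bar u_\delta-u)\varphi$ and needs only weak--strong products.

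Your upgrade of the energy bound from a.e.\ $t$ to every $t$ is a genuine improvement: continuity in $L^2$, the $H^1$ bounds and lower semicontinuity of $AT_\varepsilon$ give it, while the paper's argument only yields it for a.e.\ $t$.

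One normalization fix is needed. With the weight $\frac1{2\tau}$, your discrete equations read $\frac{u^k-u^{k-1}}{\tau}=2\,\mathrm{div}((\eta_\varepsilon+(v^{k-1})^2)\nabla u^k)$, and similarly for $v$. The limit then solves the system with every spatial term multiplied by $2$, and your energy bound carries a factor $\frac12$ on the dissipation and on $\int_0^t\|\partial_t g\|_2^2$. Use the weight $\frac1\tau$, as the paper does, to recover the stated constants.
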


To our knowledge, this notion of weak solution is not sufficient to grant the uniqueness of the solution of \eqref{système de départ} (see \cite[Remark 2.3]{FP04}). However, inspired by the proof of \cite[Theorem 2.3, Step 3]{FP04}, we obtain the following notion of uniqueness for the system \eqref{système de départ}:

\begin{theorem}\label{Theorem 3}
Let $u_0,v_0,g$ be functions satisfying conditions $(H.1), (H.2), (H.3)$ and $(H.4)$. 
Suppose that the boundary $\partial \Omega$ is of class $\mathcal{C}^2$.

\smallskip

Let $(u_1, v_1)$ and $(u_2, v_2)$ be weak solutions of \eqref{système de départ} in the sense of \autoref{solution faible}. Suppose that $\nabla u_2, \nabla u_1 \in L^{d+2}((0,T); L^{d+2}(\Omega))$ for every $T>0$.   
Then $(u_1 , v_1) = (u_2, v_2)$.
%Then $(u,v) \in \mathcal{C}^\infty ((0,+\infty) \times \overline{\Omega})$. 
\end{theorem}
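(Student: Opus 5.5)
Set $U = u_1 - u_2$ and $V = v_1 - v_2$. Both solutions have the same boundary data $(g,1)$ and the same initial data, so $U(t), V(t) \in H_0^1(\Omega)$ for a.e.\ $t$ and $U(0)=V(0)=0$. Since $(u_i,v_i)\in\mathcal{F}$, the time derivatives lie in $L^2_{loc}L^2$ and the gradients in $L^2_{loc}H^1$. By density we may test \eqref{eq : eq in u}--\eqref{eq : eq in v} with functions in $L^2((0,t);H_0^1(\Omega))\cap L^\infty$ supported in $[0,t]$, at least for the $u$-equation.

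For the $v$-equation there is a subtlety. The term $v_i|\nabla u_i|^2\varphi$ only needs $\varphi\in L^\infty$ once $|\nabla u_i|^2\in L^1$. With the hypothesis $\nabla u_i\in L^{d+2}$ we have $|\nabla u_i|^2\in L^{(d+2)/2}$, so testing with $V\in L^\infty$ is legitimate. The maximum principle of Theorem~\ref{Theorem 1} does not automatically apply to arbitrary weak solutions, but $v_i\in L^\infty$ is part of $\mathcal{F}$, so $V$ is bounded anyway.

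The plan is a Gronwall estimate on $E(t)=\|U(t)\|_2^2+\|V(t)\|_2^2$. Subtract the two $u$-equations and test with $U$:
\[
\frac12\frac{d}{dt}\|U\|_2^2+\int_\Omega(\eta_\varepsilon+v_1^2)|\nabla U|^2 = -\int_\Omega (v_1^2-v_2^2)\nabla u_2\cdot\nabla U .
\]
The right-hand side is bounded by $C\int |V||\nabla u_2||\nabla U|\le \frac{\eta_\varepsilon}{2}\|\nabla U\|_2^2 + C\int V^2|\nabla u_2|^2$, using that $v_1+v_2$ is bounded.

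Next subtract the $v$-equations and test with $V$:
\[
\frac12\frac{d}{dt}\|V\|_2^2+\varepsilon\|\nabla V\|_2^2+\frac{1}{4\varepsilon}\|V\|_2^2 = -\int V^2|\nabla u_1|^2 - \int v_2(\nabla u_1-\nabla u_2)\cdot(\nabla u_1+\nabla u_2)V .
\]
The first term on the right has a good sign. The second is bounded by $C\int |\nabla U|(|\nabla u_1|+|\nabla u_2|)|V| \le \frac{\eta_\varepsilon}{4}\|\nabla U\|_2^2 + C\int V^2(|\nabla u_1|^2+|\nabla u_2|^2)$.

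After these two estimates, everything reduces to controlling $\int_\Omega V^2 h$ with $h=|\nabla u_1|^2+|\nabla u_2|^2$. By the hypothesis, $h\in L^{q}((0,T)\times\Omega)$ with $q=(d+2)/2$, which is exactly the parabolic scaling-critical exponent. I do not expect a pointwise-in-time inequality to suffice, since $h(t)$ is only in $L^{q}$ in space with $q$-integrable norm in time. Instead I would use Hölder in space-time together with the parabolic Gagliardo--Nirenberg embedding
\[
\|V\|_{L^{2(d+2)/d}((0,t)\times\Omega)}^2\le C\Big(\sup_{s\le t}\|V(s)\|_2^2+\int_0^t\|\nabla V\|_2^2\Big),
\]
valid for $V\in H^1_0$ in space. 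This gives
\[
\int_0^t\!\!\int_\Omega V^2h\le \|h\|_{L^{q}((0,t)\times\Omega)}\, C\Big(\sup_{s\le t}\|V\|_2^2+\int_0^t\|\nabla V\|_2^2\Big).
\]
This term is then absorbed on short time intervals, chosen so that $C\|h\|_{L^q((t_0,t_0+\delta)\times\Omega)}$ is small. This is possible by absolute continuity of the integral of $h^{q}$, uniformly in $t_0$ on $[0,T]$.

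On such an interval, integrating the sum of the two energy inequalities from $t_0$, with $E(t_0)=0$, yields $\sup\|U\|^2+\sup\|V\|^2+\int\|\nabla U\|^2+\int\|\nabla V\|^2\le \frac12(\ldots)$. Hence $U=V=0$ on $[t_0,t_0+\delta]$, and finitely many steps cover $[0,T]$.

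The main obstacle is the critical-exponent absorption just described: a plain Gronwall argument fails, and one must work with the full parabolic energy norm on small intervals. A secondary technical point is justifying the testing with $U$ and $V$, including the time-derivative identity $\int\partial_t V\,V=\frac12\frac{d}{dt}\|V\|^2$. This is fine because $\partial_tV\in L^2L^2$. The $\mathcal{C}^2$ boundary hypothesis is presumably only needed for the density of admissible test functions and for the embedding constants; I do not expect it to play a deeper role.
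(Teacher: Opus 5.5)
Your argument is correct, but it takes a different route from the paper. Your claim that no pointwise-in-time estimate can work, and that a plain Gronwall argument fails, is mistaken.

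The paper works at fixed $t$. It bounds $\int_\Omega V^2|\nabla u_i|^2\le \|\nabla u_i(t)\|_{d+2}^2\,\|V(t)\|_{2(d+2)/d}^2$ by H\"older with exponents $(\frac{d+2}{2},\frac{d+2}{d})$. It then interpolates $\|V\|_{2(d+2)/d}\le C\|\nabla V\|_2^{d/(d+2)}\|V\|_2^{2/(d+2)}+C\|V\|_2$ by Gagliardo--Nirenberg. Finally it applies Young's inequality with the conjugate pair $(\frac{d+2}{d},\frac{d+2}{2})$ to absorb $\delta\|\nabla V\|_2^2$ into the dissipation. What remains is $\frac{d}{dt}E\le C\big(1+\|\nabla u_1(t)\|_{d+2}^{d+2}+\|\nabla u_2(t)\|_{d+2}^{d+2}\big)E$. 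The hypothesis $\nabla u_i\in L^{d+2}((0,T)\times\Omega)$ says precisely that this coefficient is in $L^1(0,T)$, so Gronwall closes on all of $[0,T]$ at once. This is a Ladyzhenskaya--Prodi--Serrin type condition with finite time exponent. The truly critical case, where the pointwise Gronwall argument breaks down, is the endpoint $\nabla u_i\in L^\infty_tL^d_x$, not the hypothesis at hand.

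Your space-time version is the time-integrated form of the same interpolation, and it is valid. It consists of H\"older in $(t,x)$; the embedding of $L^\infty L^2\cap L^2H^1_0$ into $L^{2(d+2)/d}$ with a constant independent of the interval length; absorption on intervals where $\|h\|_{L^{(d+2)/2}}$ is small, uniformly by absolute continuity; and continuation using $U,V\in\mathcal{C}([0,T];L^2(\Omega))$.

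The paper's route gives the stability estimate $E(t)\le E(0)\exp\big(C\int_0^t(1+\|\nabla u_1\|_{d+2}^{d+2}+\|\nabla u_2\|_{d+2}^{d+2})\,ds\big)$ with no extra effort, and it avoids the stepping argument. Yours avoids the Young bookkeeping, and it never uses the $\mathcal{C}^2$ assumption on $\partial\Omega$. The homogeneous inequality for $H_0^1$ functions holds on any bounded domain by extension by zero. The paper needs boundary regularity only to invoke the inhomogeneous Gagliardo--Nirenberg inequality, whose lower-order term it then has to handle separately.
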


%This statement could be re-interpreted as this : if \eqref{système de départ} admits a "strong" solution, than it admits a unique weak solution.  

\smallskip

Our final statement tackles the regularity of the weak solution $(u,v)$. We prove that the pair is smooth in $(0,+\infty) \times\Omega$. The regularity up to the boundary requires additional properties on $\partial\Omega$ and on the initial and boundary conditions $(u_0,v_0)$ and $g$, especially compatibility conditions on the derivatives of $g$ and $(u_0,v_0)$ at $\{0\}\times \partial\Omega$. To understand the necessary conditions, we can look briefly at the expected compatibility condition satisfied by a pair $(u,v)$ in $\mathcal{C}_{par}^{1+\alpha/2,2+\alpha}([0,+\infty)\times \overline{\Omega})$ with $\alpha \in (0,1)$, focusing on the equation in $u$. If $(u,v)\in \mathcal{C}_{par}^{1+\alpha/2,2+\alpha}([0,+\infty)\times \overline{\Omega})$, then we must have, by the time continuity of $v, \nabla v, \nabla u$ and $\Delta u$, 
\[\partial_t u(0,x) = \lim\limits_{t\rightarrow 0} \text{div}((\eta_\varepsilon +v^2(t,x)) \nabla u(t,x))= \text{div}((\eta_\varepsilon +v_0(x)^2)\nabla u_0(x))\quad \text{for any } x\in \Omega,\]
and this equality must then be satisfied for any $x\in \partial\Omega$, yielding the condition
%
% $\left.\partial_t u(t)\right|_{x\in \partial \Omega} =\left. \partial_t g(t)\right|_{x\in \partial \Omega}$ for any $t >0$ and, passing this equality to the limit:
%\[\left.\partial_t g(0,x)\right|_{x\in \partial \Omega} = \lim\limits_{t\rightarrow 0} \partial_t \left(\left. u\right|_{x\in \partial \Omega}\right) =\left. \left[\lim\limits_{t\rightarrow 0} \text{div}((\eta_\varepsilon +v^2(t,x)) \nabla u(t,x))\right]\right|_{x\in \partial \Omega},\]
%which results to the condition 
\[\partial_t g(0,x) = \text{div}((\eta_\varepsilon +v_0(x)^2)\nabla u_0(x))\quad \text{for any } x\in \partial\Omega.\]
To obtain the compatibility conditions of superior order, we differentiate with respect to $t$ in the equations of \eqref{systeme départ intro} and use again that $\partial_t^k u(0,x) = \lim\limits_{t\rightarrow 0} \partial_t^k u(t,x)$, and a similar reasoning for the function $v$. The following definition describes the quantity that is expected to correspond to $(\partial_t^k u(0,\cdot), \partial_t^k v(0,\cdot))$ in the case where $(u,v) \in \mathcal{C}^{k+\alpha/2,2k+\alpha}([0,+\infty) \times \overline{\Omega})$ with $\alpha \in (0,1)$. 

%\[\lim\limits_{t\rightarrow 0} \text{div}((\eta_\varepsilon +v^2(t,x)) \nabla u(t,x)) = \text{div}((\eta_\varepsilon +v_0(x)^2)\nabla u_0(x)) \quad \text{for any } x\in \Omega.\]
%In particular, for any $x\in \partial\Omega$, we must have 
%
%\[\text{div}((\eta_\varepsilon +v_0(x)^2)\nabla u_0(x)) = \lim\limits_{t\rightarrow 0} \partial_t u 
%
%For the solution $u$ to be smooth up to points of $\{0\}\times \partial \Omega$, the equation \eqref{systeme départ intro} must remain true at the boundary if we extend $(\partial_t u, \partial_t v)$ by $(\partial_t g, 0)$ and the spatial derivatives of $(u,v)$ by the spatial derivatives of $(u_0,v_0)$. The following definition allows to state those conditions. 

\smallskip

\begin{definition}\label{def : compatibilité}
Let $k\geqslant 0$ be an integer and let $(u_0,v_0)\in [\mathcal{C}^{k+1,1}(\overline{\Omega})]^2$. We define recursively the sequence $(\partial_t^{(\ell)} u_0, \partial_t^{(\ell)} v_0)_{\ell \leqslant k} : \Omega \rightarrow \mathbb{R}$ by 

\[(\partial_t^{(0)} u_0, \partial_t^{(0)} v_0) = (u_0, v_0)\]

\[(\partial_t^{(1)} u_0, \partial_t^{(1)} v_0) = \left(\text{div}((\eta_\varepsilon+ v_0^2)\nabla u_0), \varepsilon \Delta v_0 + \dfrac{1-v_0}{4\varepsilon} - v_0 |\nabla u_0|^2\right)\]
and for every $1 \leqslant \ell \leqslant k-1$, 

\begin{multline*}
\partial_t^{(\ell+1)} u_0 = \sum_{a+b+c=\ell} \binom{k}{a,b,c} \partial_t^{(a)} v_0 \partial_t^{(b)} v_0 \Delta \partial_t^{(c)} u_0 \\
+ 2\sum_{a+b+c=\ell} \binom{k}{a,b,c} \partial_t^{(a)} v_0 \nabla \partial_t^{(b)} v_0 \cdot \nabla \partial_t^{(c)} u_0,
\end{multline*}

\[\partial_t^{(\ell +1)} v_0 = \varepsilon \Delta \partial_t^{(\ell)} v_0 - \frac{1}{4\varepsilon} \partial_t^{(\ell)} v_0
 - \sum_{a+b+c=\ell} \binom{k}{a,b,c} \partial_t^{(a)} v_0 \nabla \partial_t^{(b)} u_0 \cdot \nabla \partial_t^{(c)} u_0.\]
\end{definition}

Notice that if $(u, v)\in \mathcal{C}^{k+\alpha/2, 2k+\alpha}([0,+\infty)\times \overline{\Omega})$, for $\alpha \in (0,1)$, is a solution of \eqref{système de départ}, then one has the equality $(\partial_t^{(\ell)} u_0, \partial_t^{(\ell)} v_0) = (\partial_t^{(\ell)} u(0, \cdot) , \partial_t^{(\ell)} v(0, \cdot))$ on $\overline{\Omega}$. 

\smallskip
%Suppose that $(u_0,v_0) \in W^{k+2, \infty}$ for an integer $k\geqslant 0$. We  $(\partial_t^{(\ell)} u(0,x), \partial_t^{(\ell)} v(0,x))$ 

\begin{theorem}\label{Theorem 2}
Suppose that $\Omega$ has a Lipschitz boundary $\partial \Omega$ and let $u_0,v_0,g$ be functions satisfying conditions $(H.1), (H.2), (H.3)$ and $(H.4)$. 
Let $(u,v)$ be a weak solution of \eqref{système de départ} and suppose that it also satisfies \eqref{energy bound thm 1} and \eqref{eq : principe maximum}. Then $(u,v) \in [\mathcal{C}^{\infty}((0,\infty)\times \Omega)]^2$.

\smallskip

Moreover, suppose that there exists an integer $k\geqslant 1$ and a real number $\beta \in (0,1)$ satisfying the following conditions: 

\begin{itemize}
\item[(i)] $\partial\Omega \in \mathcal{C}^{k, \beta}$,
\item[(ii)] $(u_0, v_0) \in \mathcal{C}^{k+1,1}(\overline{\Omega})$,
\item[(iii)] $g\in \mathcal{C}_{par}^{\lfloor k/2\rfloor + \beta/2,2\lfloor k/2\rfloor +\beta}([0,+\infty) \times \Omega)$, 
\item[(iv)] for any integer $\ell \leqslant \lfloor k/2\rfloor +1$,

\[(\partial_t^{(\ell)} u_0 , \partial_t^{(\ell)} v_0) =  (\partial_t^{(\ell)} g(0,x), 0).\]
\end{itemize}

Then $(u,v) \in \mathcal{C}_{par}^{\lfloor k/2\rfloor + \beta/2, 2\lfloor k/2\rfloor + \beta }([0,+\infty) \times \overline{\Omega})$. 
%In particular, if $k\geqslant 2$, the system \eqref{système de départ} admits a unique solution. 
\end{theorem}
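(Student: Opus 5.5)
The plan is to first establish parabolic Hölder continuity of the pair $(u,v)$, interior and then up to the boundary, and afterwards bootstrap with Schauder theory. By \eqref{eq : principe maximum}, $0\leqslant v\leqslant 1$, so the first equation of \eqref{système de départ} is a linear uniformly parabolic equation in divergence form, $\partial_t u-\text{div}(A\nabla u)=0$ with $A=\eta_\varepsilon+v^2\in[\eta_\varepsilon,\eta_\varepsilon+1]$. The De Giorgi--Nash--Moser theorem then gives $u\in \mathcal{C}_{par}^{\alpha/2,\alpha}$ locally for some $\alpha\in(0,1)$. Comparing $u$ on $Q_R$ with the $A$-caloric extension of its values, and using the Caccioppoli inequality, we obtain the Morrey bound $\nabla u\in \mathcal{L}^{2,d+\alpha'}_{loc}$ of \autoref{prop:grad_spat_Morrey}.

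Consequently, the second equation reads $\partial_t v-\varepsilon\Delta v=f$ with $f=\frac{1-v}{4\varepsilon}-v|\nabla u|^2$, and $|f|\lesssim 1+|\nabla u|^2\in \mathcal{L}^{1,d+\alpha'}$. On each cylinder $Q_R(X_0)$ we split $v=w+(v-w)$, where $w$ is the caloric function with $w=v$ on $\partial_\sqcup Q_R$. Standard estimates for the heat equation give
\[
\iint_{Q_\rho}|w-\{w\}_\rho|^2\lesssim (\rho/R)^{d+4}\iint_{Q_R}|w-\{w\}_R|^2 .
\]
The error $z=v-w$ solves the heat equation with source $f$ and zero parabolic boundary data. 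For $z$ we use the Poincaré-type inequality \autoref{prop : Poincare}, together with $|z|\leqslant 2$ (by the maximum principle), to bound $\iint_{Q_R}|z-\{z\}_R|^2\lesssim R^{d+2+\gamma}$. Testing with $z$ gives $\int|\nabla z|^2\lesssim \|z\|_\infty\int|f|\lesssim R^{d+\alpha'}$, and a Morrey version of this estimate supplies the extra power. The standard iteration lemma then yields the Campanato decay $\iint_{Q_\rho}|v-\{v\}_\rho|^2\lesssim\rho^{d+2+2\gamma}$, and the parabolic Campanato lemma \cite[Lemma 4.3]{L96} gives $v\in\mathcal{C}^{\gamma/2,\gamma}_{par}$.

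With $v$ Hölder, parabolic Schauder-type estimates for divergence-form equations with Hölder coefficients give $\nabla u\in \mathcal{C}^{\gamma/2,\gamma}$. The source $f$ is then Hölder, so $v\in\mathcal{C}^{1+\gamma/2,2+\gamma}$. This makes $A$ of class $\mathcal{C}^{1}$ in space, so $u\in\mathcal{C}^{1+\gamma/2,2+\gamma}$. Iterating, with each step gaining two spatial and one time derivative for both functions, gives $\mathcal{C}^\infty((0,\infty)\times\Omega)$.

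For the boundary statement, we repeat the Hölder step on cylinders centred at points of $\{0\}\times\overline\Omega$ and $(0,\infty)\times\partial\Omega$. We use boundary De Giorgi--Nash--Moser for $u-g$ and for $v-1$, which needs only Lipschitz boundary and Hölder data by (ii)--(iii). The boundary Morrey estimate for $\nabla u$ and the boundary Poincaré/Campanato argument follow after flattening $\partial\Omega$. After that we bootstrap with global Schauder estimates for the Cauchy--Dirichlet problem, as in the theory of \cite{L96}. These estimates require $\partial\Omega\in\mathcal{C}^{k,\beta}$ and the compatibility conditions of order up to the current level. Condition (iv) and \autoref{def : compatibilité} guarantee exactly that the traces of $\partial_t^{\ell}u,\partial_t^\ell v$ at $t=0$ match $\partial_t^\ell g$ and $0$ on $\partial\Omega$. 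One also has to check that the nonlinear right-hand sides have the needed Hölder norms, which they do by the preceding step. The regularity stops at level $\lfloor k/2\rfloor$ because each bootstrap step costs two spatial orders of boundary regularity and of $g$.

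The main obstacle is the Hölder estimate for $v$, especially near the boundary. The source $v|\nabla u|^2$ is only $L^1$ with Morrey decay, and the Poincaré inequality \autoref{prop : Poincare} must be made compatible with the Dirichlet data and the flattened geometry. I would first try to prove everything on half-cylinders, using odd reflection of $v-1$ across the flattened boundary. I would then check that the reflected source keeps its Morrey decay, so that the interior argument applies essentially verbatim.
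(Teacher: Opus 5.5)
Your interior argument is the paper's. It runs De Giorgi--Nash--Moser for $u$, then tests the $u$-equation with $\zeta^2(u-u(X_0))$ and uses H\"older continuity to get $\nabla u\in\mathcal{L}^{2,d+2\alpha}$; no comparison function is needed there, since $u$ itself solves the homogeneous equation. It then splits $v=w+z$ with $w$ caloric, gets $\iint|\nabla z|^2\lesssim r^{d+2\alpha}$ by testing with $z$, and concludes with the iteration lemma and \autoref{lem : Lieberman}. Your bootstrap order is in fact more careful than the paper's: divergence-form Schauder first, so that $\nabla u$ is H\"older, then $v\in\mathcal{C}^{1+\gamma/2,2+\gamma}$, and only then $u\in\mathcal{C}^{1+\gamma/2,2+\gamma}$. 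The paper instead applies a non-divergence Schauder estimate to $u$ while $v$ is only H\"older. The Poincar\'e step you list as an obstacle is harmless. Since $z(t,\cdot)$ vanishes on the lateral boundary of the comparison cylinder $Q_{2r}$, the ordinary spatial Poincar\'e inequality gives $\iint_{Q_\rho}|z-\{z\}_\rho|^2\leqslant\iint_{Q_{2r}}|z|^2\leqslant Cr^2\iint_{Q_{2r}}|\nabla z|^2$, near $\partial\Omega$ as well as inside.

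The boundary step, however, has a genuine gap, on three counts.

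\textbf{De Giorgi--Nash--Moser for $v-1$.} H\"older continuity of $v-1$ cannot come from this theorem. Its source $v|\nabla u|^2$ is only $L^1$ with Morrey decay, as your own last paragraph says, so only the Campanato argument is available.

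\textbf{Coefficients after reflection.} Reflecting $v-1$ oddly across the flattened boundary and then running the interior argument essentially verbatim breaks down, and this is the more serious problem. After flattening by $y=(x',x_d-\psi(x'))$, the operator $\varepsilon\Delta$ becomes $\mathrm{div}(A\nabla\,\cdot)$ with $A_{id}=-\varepsilon\,\partial_i\psi(y')$ for $i<d$. Straight odd reflection replaces $A(y)$ by $PA(Py)P$, $P=\mathrm{diag}(1,\dots,1,-1)$, which flips the sign of $A_{id}$. The reflected principal coefficients therefore jump across $\{y_d=0\}$ wherever $\nabla\psi\neq0$. Comparing with a constant-coefficient caloric $w$ then leaves an error $\mathrm{div}((A-A(x_1))\nabla\widetilde v)$ in which $|A-A(x_1)|$ does not tend to $0$ with $r$ on half of $Q_{2r}$. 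You then only get $\iint_{Q_{2r}}|\nabla z|^2\lesssim r^d$, and no H\"older exponent.

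The paper avoids this by using, without flattening, the geodesic reflection $\sigma=2\pi_\Omega-\mathrm{id}$. Its differential on $\partial\Omega$ is the orthogonal reflection through $T_x\partial\Omega$, so $j$ and $M$ take the values $1$ and $I_d$ on $\partial\Omega$ from both sides, and $H=M/j$ is continuous across $\partial\Omega$. The paper then freezes $H_1=H(x_1)$, $J_1=J(x_1)$. It controls $\mathrm{div}((H-H_1)\nabla\widetilde v)$ by $|H-H_1|\lesssim r$ together with the Caccioppoli bound $\iint_{Q_r}|\nabla\widetilde v|^2\leqslant Cr^d$ of \autoref{lem : energie tilv}.

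To keep your flattening you have two options. One is to recentre the chart at each boundary point, so that $\nabla\psi$ vanishes there, and patch boundary and interior Campanato estimates. The other is to let $w$ solve the homogeneous equation with the same discontinuous coefficients and use its De Giorgi--Nash--Moser decay $(\rho/r)^{d+2+2\alpha_0}$ instead of $(\rho/r)^{d+4}$. This closes the iteration for any exponent $\gamma\leqslant\alpha$ with $\gamma<\alpha_0$.

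\textbf{Cylinders at $t=0$.} You do not say how the Campanato step runs on cylinders meeting $\{t=0\}$. The paper first extends $(u,v)$ backward in time by the constant pair $(u_0,v_0)$. This turns $\{0\}\times\overline\Omega$ into interior times at the cost of the bounded sources $U_0,V_0$ of \eqref{def U0 V0}, which is where $(u_0,v_0)\in\mathcal{C}^{1,1}$ enters. Only then does it reflect in space.
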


%Let $\Omega \subset \mathbb{R}^d$ be a bounded open set with smooth boundary. 
%Let $(u_0, v_0) \in H^1(U)^2$ such that $0 \leqslant v_0 \leqslant 1$. Let $g(t)$ be a smooth function.

Notice that the regularity stated in \autoref{Theorem 2} is non-trivial. Indeed, the second equation of \eqref{système de départ} is of the form
\[\partial_t v - \varepsilon \Delta v = f, \quad \text{with } \sup_{(0,+\infty)} \|f(t)\|_{1} < + \infty,\]
for which the standard theory does not apply. To prove the regularity for the solution of \eqref{système de départ}, we obtain parabolic Hölder regularity for the pair $(u,v)$ using arguments from the theory of parabolic differential equations (see \cite{L96}). The key point is to prove that $\nabla u$ belongs to a suitable Morrey space, which is achieved using a parabolic version of Caccioppoli' inequality, and to deduce that $v$ is Hölder, which is achieved using a Poincaré's inequality and the parabolic Campanato lemma (see \cite[Lemma 4.3]{L96}).  

\smallskip

In the sequel, we adopt the following notation given a function $g \in L^\infty(\partial \Omega)\cap H^{1/2}(\partial\Omega)$: 

\[\mathcal{A}_g = \{ (u,v) \in [H^1(\Omega) \cap L^\infty (\Omega)]^2, u_{|\partial\Omega} = g, v_{|\partial\Omega} =1\}.\]

A consequence of the direct method in calculus of variation is the following result:  

\begin{proposition}\label{prop : méthode directe}
Let $\delta >0$.
Let $(\tilde{u}, \tilde{v}) \in [H^1(\Omega)\cap L^\infty(\Omega)]^2$ with $0\leqslant \tilde{v} \leqslant 1$ a.e in $\Omega$ and $g \in L^\infty(\partial \Omega)\cap H^{1/2}(\partial\Omega)$. The problem 

\[\min\left\{AT_\varepsilon (u,v) + \frac{1}{\delta} \int_\Omega \left(|u-\tilde{u}|^2 +|v-\tilde{v}|^2\right), (u,v) \in \mathcal{A}_g\right\}\]
admits a solution $(u,v)$ which satisfies the following maximum principle : 

\[ |u| \leqslant \max(\|g\|_{\infty, \partial\Omega}, \|\tilde{u}\|_{\infty,\Omega}), \quad \quad 0\leqslant v\leqslant 1 \quad \text{a.e in }\Omega.\]
\end{proposition}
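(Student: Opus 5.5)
The plan is to use the direct method for existence and then a truncation argument for the maximum principle. Denote by $J(u,v)$ the functional to be minimized. The class $\mathcal{A}_g$ is nonempty. Indeed, $g\in H^{1/2}(\partial\Omega)\cap L^\infty(\partial\Omega)$ has an $H^1$ extension $G$, and truncating $G$ at level $\|g\|_{\infty,\partial\Omega}$ keeps the trace and makes it bounded. The pair $(G,1)$ therefore belongs to $\mathcal{A}_g$ and $J(G,1)<\infty$.

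\textbf{Compactness and lower semicontinuity.} First I reduce to a bounded minimizing sequence. Let $M=\max(\|g\|_{\infty,\partial\Omega},\|\tilde u\|_{\infty,\Omega})$. For $(u,v)\in\mathcal{A}_g$, set $u^M=\max(-M,\min(u,M))$ and $v^\sharp=\min(\max(v,0),1)$. These preserve the boundary conditions, since $|g|\le M$ and the boundary value $1$ lies in $[0,1]$. They also do not increase $J$:
\begin{itemize}
\item $|\nabla u^M|\le|\nabla u|$ pointwise.
\item $|u^M-\tilde u|\le|u-\tilde u|$, because $|\tilde u|\le M$ and truncation is $1$-Lipschitz and fixes $\tilde u$.
\item $|v^\sharp|\le|v|$, so $(\eta_\varepsilon+(v^\sharp)^2)\le(\eta_\varepsilon+v^2)$.
\item $|\nabla v^\sharp|\le|\nabla v|$.
\item $|1-v^\sharp|\le|1-v|$ and $|v^\sharp-\tilde v|\le|v-\tilde v|$, since $0\le\tilde v\le1$.
\end{itemize}
Hence $J(u^M,v^\sharp)\le J(u,v)$, and I may take a minimizing sequence $(u_n,v_n)$ with $|u_n|\le M$ and $0\le v_n\le 1$.

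Coercivity then gives $H^1$ bounds. Since $\int\eta_\varepsilon|\nabla u_n|^2$ and $\varepsilon\int|\nabla v_n|^2$ are bounded, and the $L^2$ norms are bounded by the fidelity term (or by the $L^\infty$ bounds), the sequence is bounded in $H^1$. Extract a subsequence with $u_n\rightharpoonup u$ and $v_n\rightharpoonup v$ weakly in $H^1$, strongly in $L^2$ by Rellich, and a.e. The trace is weakly continuous, so $(u,v)\in\mathcal{A}_g$, and the pointwise bounds pass to the limit.

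The lower order terms and $\varepsilon\int|\nabla v|^2$ are lower semicontinuous by strong $L^2$ convergence and weak convergence. The main point is the term $\int(\eta_\varepsilon+v_n^2)|\nabla u_n|^2$. I write it as $\|\sqrt{\eta_\varepsilon+v_n^2}\,\nabla u_n\|_2^2$. Because $\sqrt{\eta_\varepsilon+v_n^2}\to\sqrt{\eta_\varepsilon+v^2}$ a.e. and stays uniformly bounded by $\sqrt{\eta_\varepsilon+1}$, dominated convergence gives, for every $\psi\in L^2$,
\[
\sqrt{\eta_\varepsilon+v_n^2}\,\psi\to\sqrt{\eta_\varepsilon+v^2}\,\psi \quad\text{strongly in } L^2 .
\]
Combined with $\nabla u_n\rightharpoonup\nabla u$ weakly in $L^2$, this yields $\sqrt{\eta_\varepsilon+v_n^2}\,\nabla u_n\rightharpoonup\sqrt{\eta_\varepsilon+v^2}\,\nabla u$ weakly in $L^2$. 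Weak lower semicontinuity of the norm then gives the liminf inequality. So $(u,v)$ is a minimizer satisfying the stated bounds.

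\textbf{Maximum principle for every minimizer.} If the statement is meant for every minimizer, I apply the truncation argument directly. For a minimizer $(u,v)$ the truncated pair is also a minimizer. Equality $J(u^M,v^\sharp)=J(u,v)$ forces every inequality in the list above to be an equality. In particular equality in the fidelity term gives $\int|u-\tilde u|^2=\int|u^M-\tilde u|^2$. Where $u>M$, one has $|u-\tilde u|>|M-\tilde u|$ strictly, so $\{u>M\}$ is null, and likewise $\{u<-M\}$. The same argument applies to $v$ through its fidelity term, using $0\le\tilde v\le 1$. This is the step where a little care is needed, but the fidelity terms make it strict and immediate.
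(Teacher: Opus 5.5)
Your proof is correct and follows the route the paper indicates: the direct method for existence, plus the classical truncation argument for the bounds (the paper gives no further detail than that). Your equality-case argument via the strict decrease of the fidelity terms is a worthwhile addition, since it gives the bounds for every minimizer, which is what the Euler scheme actually needs when it selects an arbitrary element of the Argmin.
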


The proof of the maximum principle follows from a classical truncation argument.

\section{Existence of solutions}
\label{Section existence faible}

The object of \autoref{Section existence faible} is to prove \autoref{Theorem 1}, namely the existence of a weak solution to \eqref{système de départ} by using the finite difference approximation in time, in a similar fashion as in \cite{X94} and \cite{BM14}. The Euler scheme is defined as followed :

\begin{definition}\label{Eueler Scheme}
Let $\delta >0$ and $(u_0, v_0)\in (H^1(\Omega) \cap L^\infty(\Omega))^2$ be a pair such that $(u_0, v_0)= (g(0,\cdot ), 1)$ on $\partial\Omega$ and such that $0\leqslant v_0 \leqslant 1$. The sequence $((u_i^\delta, v_i^\delta))_{i\geqslant 0}$ is defined recursively by $(u_0^\delta, v_0^\delta) = (u_0, v_0)$ and for $i\geqslant 1$,

\[(u_{i+1}^\delta, v_{i+1}^\delta) \in \text{Argmin} \left\{AT_\varepsilon (u,v) + \frac{1}{\delta} \int_\Omega \left(|u-u_i^\delta|^2 +|v-v_i^\delta|^2\right), (u,v) \in \mathcal{A}_{g((i+1)\delta)}\right\}.\] 
\end{definition}

The pair $(\overline{u}^\delta(t), \overline{v}^\delta(t)) = \sum_{i=0}^\infty (u_i^\delta(t), v_i^\delta(t)) 1_{t\in [i\delta, (i+1)\delta)}(t)$ aims to be a time approximation of a weak solution to \eqref{solution faible}. 

\smallskip

Let us highlight that, as the functional is not strictly convex, the minimizing pair $(u_{i+1}^\delta, v_{i+1}^\delta)$ might not be unique, which explains why the functional space $\mathcal{F}$ is not a uniqueness class for \autoref{solution faible}. The lack of uniqueness is also an issue for the numerical approximation, as the classic gradient scheme might not converge to the minimizer $(u_i^\delta, v_i^\delta)$. See \cite{BM14} for an alternative algorithm used for numerical experiments.

\medskip

Observe that this definition implies that the pair $(u_i^\delta, v_i^\delta)$ is a weak distributional solution of the following Euler-Lagrange equation for any index $i\geqslant 1$ : 

\begin{equation}\label{EL approché discret}
\left\{\begin{array}{ll}
\displaystyle\frac{u_i^\delta - u_{i-1}^\delta}{\delta} - \text{div}((\eta_\varepsilon + |v_i^\delta|^2) \nabla u_i^\delta) = 0 & \text{in } \mathcal{D}'(\Omega),\\
\displaystyle\frac{v_i^\delta - v_{i-1}^\delta}{\delta} - \varepsilon\Delta v_i^\delta + \frac{v_i^\delta -1}{4\varepsilon} + v_i^\delta |\nabla u_i^\delta|^2 =0 & \text{in } \mathcal{D}'(\Omega),\\
(u_i^\delta - g(i\delta , \cdot) , v_i^\delta - 1) \in [H_0^1(\Omega)]^2\\
\end{array}\right.
\end{equation}

\subsection{Study of the sequence $(u_i , v_i)$}

In this subsection, we fix a positive number $\delta >0$ and consider the sequence $((u_i^\delta, v_i^\delta))_{i\geqslant 0}$ defined in \autoref{Eueler Scheme}. In order to pass to the limit in the sequence $((u_i^\delta, v_i^\delta))_{i\geqslant 0}$ as $\delta \rightarrow 0$, we need compactness properties, which are the issue of this subsection. More precisely, we provide a uniform energy bound on $(AT_\varepsilon (u_i^\delta, v_i^\delta))_{i\geqslant 0}$. We drop the subscript $\delta$ and write simply the sequence as $(u_i, v_i)$. 

We also define 

\begin{equation}\label{gi}
g_i(x) = g(x, i\delta) \quad \forall x\in \mathbb{R}^d.
\end{equation} 

%In the sequel, we fix an integer $N$ and we set $\delta = T/N$. We set $t_i = i \delta$. We construct iteratively the sequence $(u_i, v_i)$, initiated by $(u_0, v_0)$. 

A consequence of \autoref{prop : méthode directe} is the discrete maximum principle :

\begin{proposition}\label{Maximum principle}
For any index $i\geqslant 1$, the pair $(u_i, v_i)$ satisfies

\begin{equation}\label{maximum principle}
0\leqslant v_i \leqslant 1, \quad |u_i| \leqslant \max(\|g\|_{\infty, (0,\infty)\times \Omega}, \|u_0\|_{\infty,\Omega}). 
\end{equation}
\end{proposition}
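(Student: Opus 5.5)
The proof is an induction on $i$ built on \autoref{prop : méthode directe}. Set $M := \max(\|g\|_{\infty, (0,\infty)\times \Omega}, \|u_0\|_{\infty,\Omega})$. We prove by induction on $i\geqslant 0$ the stronger statement that $(u_i,v_i)$ lies in $[H^1(\Omega)\cap L^\infty(\Omega)]^2$, with $0\leqslant v_i\leqslant 1$ and $|u_i|\leqslant M$ a.e.\ in $\Omega$.

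\textbf{Base case $i=0$.} Here $(u_0,v_0)$ is the initial datum. By (H.1) and (H.2), it lies in $[H^1(\Omega)\cap L^\infty(\Omega)]^2$ and satisfies $0\leqslant v_0\leqslant 1$. The bound $|u_0|\leqslant \|u_0\|_{\infty,\Omega}\leqslant M$ is immediate.

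\textbf{Inductive step.} Assume the claim holds at rank $i$. We apply \autoref{prop : méthode directe} with $\tilde u = u_i$, $\tilde v = v_i$ and boundary datum $g_{i+1} = g((i+1)\delta,\cdot)$. This boundary datum belongs to $L^\infty(\partial\Omega)\cap H^{1/2}(\partial\Omega)$ by (H.4), possibly after discarding a null set of times, or by choosing the grid offset, since (H.4) only holds for a.e.\ $t$.

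The hypotheses of \autoref{prop : méthode directe} hold: $(\tilde u,\tilde v)\in[H^1\cap L^\infty]^2$ and $0\leqslant\tilde v\leqslant 1$ by the induction hypothesis. The proposition asserts that some minimizer satisfies the maximum principle. We take $(u_{i+1},v_{i+1})$ to be this minimizer; equivalently, we note that the truncation argument behind the proposition applies to every minimizer. That argument runs as follows. Replacing $v$ by $\min(\max(v,0),1)$ and $u$ by its truncation at level $\max(\|g_{i+1}\|_{\infty,\partial\Omega},\|u_i\|_\infty)$ preserves the boundary conditions. It does not increase $|\nabla u|^2$, $|\nabla v|^2$, $v^2$ (where $v\geqslant 0$ matters) or $(1-v)^2$. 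It also strictly decreases the fidelity terms wherever the truncation is active. Hence a minimizer coincides with its truncation a.e.

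This yields
\[
0\leqslant v_{i+1}\leqslant 1,\qquad |u_{i+1}|\leqslant \max(\|g_{i+1}\|_{\infty,\partial\Omega},\|u_i\|_{\infty,\Omega}).
\]

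\textbf{Closing the induction.} It remains to see that the right-hand side is at most $M$. First, $\|u_i\|_\infty\leqslant M$ by the induction hypothesis. Second, the trace bound $\|g_{i+1}\|_{\infty,\partial\Omega}\leqslant \|g\|_{\infty,(0,\infty)\times\Omega}$ holds because $g(t)\in H^1(\Omega)\cap L^\infty(\Omega)$: the trace of a bounded $H^1$ function is bounded by its $L^\infty$ norm, since truncation commutes with the trace. This completes the induction.

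\textbf{Main obstacle.} The only delicate points are technical. The first is ensuring that the chosen minimizer in \autoref{Eueler Scheme} is one satisfying the bounds; the strict decrease of the fidelity term shows this holds for all minimizers. The second is the $L^\infty$ control of the trace of $g$, handled as above.
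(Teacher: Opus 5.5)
Your proof is correct and follows the paper's route: the paper simply states this as a consequence of the direct-method proposition (whose bounds come from the classical truncation argument), applied step by step with $(\tilde u,\tilde v)=(u_i,v_i)$ and boundary datum $g((i+1)\delta)$. Your write-up makes explicit what the paper leaves implicit: the induction keeping $(u_i,v_i)\in[H^1\cap L^\infty]^2$ with $0\leqslant v_i\leqslant 1$, the strict decrease of the fidelity term showing that every minimizer (not just some minimizer) obeys the bounds, and the trace estimate $\|g((i+1)\delta)\|_{\infty,\partial\Omega}\leqslant\|g\|_{\infty,(0,\infty)\times\Omega}$.
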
 

We now establish the discrete energy bound on $(AT_\varepsilon (u_i, v_i))_{i\geqslant 0}$. 

\begin{proposition}\label{prop : energy bound discret}
For any $N\geqslant 1$, 

\begin{multline}\label{energy bound discret}
AT_\varepsilon (u_N, v_N) + \sum_{i=1}^N \frac{1}{\delta} \int_\Omega \left(|u_i -u_{i-1}|^2 + |v_i -v_{i-1}|^2\right) \leqslant AT_\varepsilon (u_0, v_0)+ \sum_{i=1}^N \frac{1}{\delta} \int_\Omega |g_i -g_{i-1}|^2\\
+ \sum_{i=1}^N \left[\int_\Omega (\eta_\varepsilon + v_{i-1} ^2) (|\nabla (g_i - g_{i-1})|^2 + 2\nabla u_{i-1} \cdot \nabla (g_i - g_{i-1}))\right].
\end{multline}
\end{proposition}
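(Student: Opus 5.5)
The plan is to compare, at each step $i$, the minimizer $(u_i,v_i)$ with a competitor built from the previous step, and then sum the resulting one-step inequalities. Since $(u_{i-1},v_{i-1})\in\mathcal{A}_{g_{i-1}}$, we shift the boundary datum. Take the competitor
\[(\hat u,\hat v) = (u_{i-1} + g_i - g_{i-1},\, v_{i-1}).\]
By (H.1) and (H.4), $g_i-g_{i-1}\in H^1(\Omega)\cap L^\infty(\Omega)$ with trace $g_i-g_{i-1}$ on $\partial\Omega$. Hence $(\hat u,\hat v)\in\mathcal{A}_{g_i}$ is admissible in the minimization of \autoref{Eueler Scheme}. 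Minimality of $(u_i,v_i)$ then gives
\[AT_\varepsilon(u_i,v_i)+\frac1\delta\int_\Omega\left(|u_i-u_{i-1}|^2+|v_i-v_{i-1}|^2\right)\leqslant AT_\varepsilon(\hat u,\hat v)+\frac1\delta\int_\Omega |g_i-g_{i-1}|^2 .\]

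Next we expand $AT_\varepsilon(\hat u,\hat v)$. The $v$-part is unchanged because $\hat v=v_{i-1}$. Only the elastic term changes, and $(\eta_\varepsilon+v_{i-1}^2)|\nabla u_{i-1}+\nabla(g_i-g_{i-1})|^2$ splits as
\[(\eta_\varepsilon+v_{i-1}^2)\left(|\nabla u_{i-1}|^2+|\nabla(g_i-g_{i-1})|^2+2\nabla u_{i-1}\cdot\nabla(g_i-g_{i-1})\right).\]
Therefore
\[AT_\varepsilon(\hat u,\hat v)=AT_\varepsilon(u_{i-1},v_{i-1})+\int_\Omega(\eta_\varepsilon+v_{i-1}^2)\left(|\nabla(g_i-g_{i-1})|^2+2\nabla u_{i-1}\cdot\nabla(g_i-g_{i-1})\right).\]
Each term here is finite because $v_{i-1}\in L^\infty$, using \autoref{Maximum principle} or directly $0\leqslant v_0\leqslant 1$.

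Summing the one-step inequality over $i=1,\dots,N$, the terms $AT_\varepsilon(u_{i-1},v_{i-1})$ telescope against $AT_\varepsilon(u_i,v_i)$. This leaves exactly \eqref{energy bound discret}.

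Step $i=1$ needs a check. The definition of the scheme is stated for $i\geqslant1$ producing $u_{i+1}$, but we read it as defining all $u_i$ for $i\geqslant 1$ from $u_{i-1}$, with $(u_0,v_0)\in\mathcal{A}_{g_0}$ by (H.3). The competitor argument then works for $i=1$ as well.

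The only genuinely delicate point is admissibility of the competitor. This requires $g_i-g_{i-1}$ to lie in $H^1(\Omega)\cap L^\infty(\Omega)$ with the right trace, which follows from (H.1) and (H.4). Everything else is algebraic.
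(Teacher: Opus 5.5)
Your proposal is correct and follows the paper's proof. The paper uses the same competitor $(u_{i-1}+g_i-g_{i-1},\,v_{i-1})\in\mathcal{A}_{g_i}$, expands the elastic term, and sums over $i=1,\dots,N$ so that the energies telescope; your remarks on the admissibility of $g_i-g_{i-1}$ and on the indexing of the scheme at $i=1$ spell out details the paper leaves implicit.
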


\begin{proof}
%We start by comparing $AT_\varepsilon (u_i, v_i)$ with $AT_\varepsilon (u_{i-1}, v_{i-1})$ for any index $i\geqslant 1$. 
Take the pair $(u,v) = (u_{i-1} + g_i - g_{i-1} , v_{i-1})$, which is an element of $\mathcal{A}_{g_i}$, as a competitor. The construction of the pair $(u_i, v_i)$ in \autoref{Eueler Scheme} provides 
%
%\[\mathcal{AT}_{\varepsilon , \delta , u_{i-1}, v_{i-1}} (u_i ,v_i)\leqslant \mathcal{AT}_{\varepsilon , \delta , u_{i-1}, v_{i-1}} (u,v) \]
%
%which rewrites as 

\begin{multline}\label{bound discret 1}
AT_\varepsilon (u_i,  v_i) + \frac{1}{\delta} \int_\Omega \left(|u_i -u_{i-1}|^2 + |v_i -v_{i-1}|^2\right)\\
\leqslant AT_\varepsilon (u,  v) + \frac{1}{\delta} \int_\Omega \left(|u -u_{i-1}|^2 + |v -v_{i-1}|^2\right) 
=AT_\varepsilon (u_{i-1}, v_{i-1}) + \frac{1}{\delta} \int_\Omega |g_i -g_{i-1}|^2\\
+ \int_\Omega (\eta_\varepsilon + v_{i-1} ^2)\left(|\nabla (g_i - g_{i-1})|^2 + 2\nabla u_{i-1} \cdot \nabla (g_i - g_{i-1})\right).
\end{multline}
Summing \eqref{bound discret 1} for $i=1, \ldots , N$ and simplifying yields the desired inequality. 
\end{proof}

\subsection{Convergence with delta}

In the subsection, we let $\delta \rightarrow 0$ to obtain a solution of \eqref{système de départ}. 
We define the respectively piecewise constant and affine functions on $\mathbb{R}_+ \times \Omega$ : 

\[(\overline{u}_\delta (t), \overline{v}_\delta (t),\overline{g}_\delta (t) )= (u_i , v_i, g_i) \quad \quad  \text{for } t\in [i\delta , (i+1)\delta ),\]

\begin{multline*}
(\hat{u}_\delta (t) , \hat{v}_\delta (t), \hat{g}_\delta (t)) = \\
\left((u_i , v_i, g_i) + \dfrac{t-i\delta}{\delta} (u_{i+1} - u_i, v_{i+1}- v_i, g_{i+1}-g_i)\right) \quad \text{for }t\in [i\delta , (i+1)\delta ).
\end{multline*}
Note that $(\overline{u}_\delta, \overline{v}_\delta, \overline{g}_\delta) \in [L^\infty_{loc}([0,\infty); H^1(\Omega)]^3$ and $(\hat{u}_\delta, \hat{v}_\delta, \hat{g}_\delta) \in [H^1_{loc}([0,\infty);H^1(\Omega))]^3$. 
For any time $t>0$ let $t_\delta = \delta \left\lfloor\dfrac{t}{\delta}\right\rfloor$. The maximum principle \autoref{Maximum principle} rewrites as 
\begin{equation}\label{max prin delta}
0\leqslant \overline{v}_\delta, \hat{v}_\delta \leqslant 1, \quad |\overline{u}_\delta|, |\hat{u}_\delta| \leqslant \max(\|g\|_{\infty, (0,+\infty)\times \Omega}, \|u_0\|_{\infty,\Omega}), 
\end{equation}
while \autoref{prop : energy bound discret} rewrites as 

\begin{multline}\label{energy bound continue}
AT_\varepsilon (\overline{u}_\delta (t) , \overline{v}_\delta (t)) + \int_0^{t_\delta} \left(\|\partial_t \hat{u}_\delta(s)\|_2^2 + \|\partial_t \hat{v}_\delta(s)\|_2^2 \right)ds 
\leqslant AT_\varepsilon (u_0 , v_0) + \int_0^{t_\delta} \|\partial_t \hat{g}_\delta (s)\|_2^2 ds\\
+ \int_0^{t_\delta}\int_\Omega (\eta_\varepsilon + \overline{v}_\delta(s)^2)\left(|\partial_t \nabla \hat{g}_\delta (s)|^2 + 2 \nabla \overline{u}_\delta (s)\cdot \nabla (\partial_t \hat{g}_\delta (s)\right) dx ds.
\end{multline}
Considering the Euler--Lagrange equations \eqref{EL approché discret},  the pair $(u_i, v_i)$ satisfies for a.e $t >0$

\begin{equation}\label{EL approché t fixe}
\left\{\begin{array}{ll}
\partial_t \hat{u}_\delta (t) - \text{div}((\eta_\varepsilon + \overline{v}_\delta^2(t)) \nabla \overline{u}_\delta(t)) = 0 & \text{in } \mathcal{D}'(\Omega),\\
\partial_t \hat{v}_\delta (t) - \varepsilon\Delta \overline{v}_\delta(t) + \frac{\overline{v}_\delta(t) -1}{4\varepsilon} + \overline{v}_\delta(t) |\nabla \overline{u}_\delta(t)|^2 =0 & \text{in } \mathcal{D}'(\Omega),\\
(\overline{u}_\delta(t) - g_i(t), v_i^\delta(t) -1) \in H_0^1(\Omega). &\\
\end{array}\right.
\end{equation}
Let $\varphi\in \mathcal{D}((0,\infty)\times \Omega)$. For any $t >0$, the function $\varphi(t,\cdot)$ is in $\mathcal{D}(\Omega)$, so 
\begin{equation}\label{EL approché t fixe1}
\int_\Omega \partial_t \hat{u}_\delta (t,x)\varphi(t,x)dx +\int_\Omega (\eta_\varepsilon + \overline{v}_\delta^2(t,x)) \nabla \overline{u}_\delta(t,x) \cdot \nabla\varphi(t,x)dx = 0,
\end{equation}

\begin{multline}\label{EL approché t fixe2}
\int_\Omega \partial_t \hat{v}_\delta (t,x) \varphi(t,x)dx +\int_\Omega \bigg[\varepsilon\nabla \overline{v}_\delta(t,x) \cdot \nabla \varphi (t,x)\\
\left. + \frac{\overline{v}_\delta(t,x) -1}{4\varepsilon}\varphi(t,x) + \overline{v}_\delta(t,x) |\nabla \overline{u}_\delta(t,x)|^2 \varphi(t,x)\right]dx =0  
\end{multline}
Integrating \eqref{EL approché t fixe1} and \eqref{EL approché t fixe2} over $(0,\infty)$, we deduce that

\begin{equation}\label{EL approché t fixe3}
\int_0^\infty\int_\Omega \partial_t \hat{u}_\delta (t,x)\varphi(t,x)dxdt +\int_0^\infty\int_\Omega (\eta_\varepsilon + \overline{v}_\delta^2(t,x)) \nabla \overline{u}_\delta(t,x) \cdot \nabla\varphi(t,x)dxdt = 0,
\end{equation}

\begin{multline}\label{EL approché t fixe4}
\int_0^\infty\int_\Omega \partial_t \hat{v}_\delta (t,x) \varphi(t,x)dxdt +\int_0^\infty\int_\Omega \bigg[\varepsilon\nabla \overline{v}_\delta(t,x) \cdot \nabla \varphi (t,x) + \frac{\overline{v}_\delta(t,x) -1}{4\varepsilon}\varphi(t,x)\\
\left. + \overline{v}_\delta(t,x) |\nabla \overline{u}_\delta(t,x)|^2 \varphi(t,x)\right]dxdt =0.
\end{multline}

As \eqref{EL approché t fixe3} and \eqref{EL approché t fixe4} are true for any $\varphi \in \mathcal{D}((0,\infty)\times \Omega)$,
% & \text{on } \partial\Omega,\\
%\overline{v}_\delta = 1 & \text{on } \partial\Omega, \\
%(\overline{u}_\delta(0,\cdot), \overline{v}_\delta(0,\cdot))= (u_0, v_0) & \text{on } \Omega.
the space of admissible test functions $\mathcal{C}_c^\infty((0,+\infty) \times \Omega)$ can be extended by density to $L_{loc}^\infty((0,+\infty) \times \Omega) \cap H_{loc}^1((0,+\infty); L^2(\Omega)) \cap L^2((0,+\infty); H_0^1(\Omega))$. 

\smallskip

Let us first recall the following standard interpolation result for the sequence $(\overline{g}_\delta , \hat{g}_\delta)$. 

\begin{proposition}\label{prop : cv g}
Suppose that $g \in H^2_{loc}((0,+\infty); H^1(\Omega))$. Then 

\begin{enumerate}
\item $\overline{g}_\delta (t) \longrightarrow g(t)$ strongly in $H^1(\Omega)$ for almost every $t >0$,

\item $\overline{g}_\delta \longrightarrow g$ strongly in $L^2_{loc}((0,+\infty) ; H^1(\Omega))$

\item $\partial_t \hat{g}_\delta (t) \longrightarrow \partial_t g(t)$ strongly in $H^1(\Omega)$ for almost every $t >0$. 

\item $\partial_t \hat{g}_\delta \longrightarrow \partial_t g$ strongly in $L^2_{loc}((0,+\infty) ; H^1(\Omega))$, 

\end{enumerate}
\end{proposition}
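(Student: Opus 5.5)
The plan is to treat $X = H^1(\Omega)$ as a Hilbert space and use the vector-valued Sobolev structure of $g$. Since $g \in H^2_{loc}((0,+\infty);H^1(\Omega))$, both $g$ and $\partial_t g$ lie in $H^1_{loc}((0,+\infty);X)$. Hence they admit representatives that are continuous (indeed $1/2$-Hölder) with values in $X$. For any compact interval $[a,b]\subset(0,+\infty)$ and $s\le t$ in it we have
\[
\|h(t)-h(s)\|_X \leqslant \int_s^t \|\partial_t h(\tau)\|_X\,d\tau \leqslant |t-s|^{1/2}\,\|\partial_t h\|_{L^2(a,b;X)},
\]
applied to $h=g$ and $h=\partial_t g$. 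All four items then follow from this modulus of continuity, plus a local-averaging argument for the difference quotients.

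For item (1), fix $t>0$ and write $t_\delta=\delta\lfloor t/\delta\rfloor$, so that $\overline{g}_\delta(t)=g(t_\delta)$ and $|t-t_\delta|\le\delta$. The estimate above gives $\|\overline{g}_\delta(t)-g(t)\|_X\le \delta^{1/2}\|\partial_t g\|_{L^2(t-1,t+1;X)}$ once $\delta$ is small. This yields convergence for every $t>0$, and in particular almost everywhere. Item (2) follows by integrating the squared bound over $[a,b]$: we get $\|\overline{g}_\delta-g\|^2_{L^2(a,b;X)}\le (b-a)\,\delta\,\|\partial_t g\|^2_{L^2(a-1,b+1;X)}\to0$. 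Near $t=0$ the hypothesis $(H.1)$ gives $g\in H^1_{loc}([0,\infty);X)$, so the same estimate even works on $[0,b]$.

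For item (3), on $[i\delta,(i+1)\delta)$ we have
\[
\partial_t\hat g_\delta(t)=\frac{g((i+1)\delta)-g(i\delta)}{\delta}=\frac1\delta\int_{i\delta}^{(i+1)\delta}\partial_t g(\tau)\,d\tau .
\]
Thus
\[
\|\partial_t\hat g_\delta(t)-\partial_t g(t)\|_X\le \frac1\delta\int_{i\delta}^{(i+1)\delta}\|\partial_t g(\tau)-\partial_t g(t)\|_X\,d\tau\le (2\delta)^{1/2}\|\partial_t^2 g\|_{L^2(t-1,t+1;X)},
\]
using the Hölder modulus of $\partial_t g$, since $|\tau-t|\le\delta$. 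This gives convergence at every $t$. Integrating in $t$ over compact intervals gives item (4). If we only want to use $g\in H^1_{loc}(X)$ for item (4), an alternative is to note that $\partial_t\hat g_\delta$ is the conditional expectation (piecewise average) of $\partial_t g$ on the grid. Such averages are contractions on $L^2(a,b;X)$ and converge on the dense subset of continuous functions, so they converge strongly.

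There is no genuine obstacle here. The only point requiring care is justifying the fundamental theorem of calculus for $X$-valued $H^1$ functions, i.e. identifying $g$ with its absolutely continuous representative. This is standard Bochner-space theory.
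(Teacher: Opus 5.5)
The paper does not prove this proposition; it simply recalls it as a ``standard interpolation result''. Your argument is a correct proof of it. The time-Hölder modulus $\|h(t)-h(s)\|_X\leqslant |t-s|^{1/2}\|\partial_t h\|_{L^2(X)}$ with $h=g$ gives items (1)--(2). The identity $\partial_t\hat g_\delta(t)=\frac{1}{\delta}\int_{i\delta}^{(i+1)\delta}\partial_t g$, together with the same modulus for $h=\partial_t g$, gives items (3)--(4). One cosmetic fix: under the hypothesis as stated, $g\in H^2_{loc}((0,+\infty);H^1(\Omega))$, nothing is assumed near $t=0$. So the intervals $(t-1,t+1)$ and $(a-1,b+1)$ should be replaced by, e.g., $(t/2,t+1)$ and $(a/2,b+1)$, with $\delta<\min(t,a)/2$.

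Your alternative argument for (4) is worth keeping, because it is the version the paper actually needs. \autoref{Theorem 1} only assumes $(H.1)$, i.e. $g\in H^1_{loc}([0,+\infty);H^1(\Omega))$, not $H^2_{loc}$. The proposition is also used on intervals $(0,T)$ that touch $t=0$:
\begin{itemize}
\item in \autoref{cor h1 bound}, for the uniform bound on $\|\partial_t\hat g_\delta\|_{L^2((0,T);H^1(\Omega))}$;
\item when passing to the limit in the energy inequality, for strong convergence of $\partial_t\hat g_\delta$ and $\partial_t\nabla\hat g_\delta$ in $L^2((0,t)\times\Omega)$.
\end{itemize}
Viewing $\partial_t\hat g_\delta$ as the piecewise average of $\partial_t g$ handles exactly this under $(H.1)$ alone: Jensen gives the uniform bound, and density of continuous functions gives convergence. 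Item (3) also survives under this weaker hypothesis. Indeed, $\partial_t\hat g_\delta(t)$ is an average of $\partial_t g$ over an interval of length $\delta$ containing $t$, so it converges at every Lebesgue point of $\partial_t g$ by the Lebesgue differentiation theorem for Bochner integrals. Your argument therefore also repairs the mismatch between the proposition's $H^2_{loc}$ hypothesis and the $H^1_{loc}$ hypothesis of \autoref{Theorem 1}.
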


We now prove uniform estimates for $(\overline{u}_\delta, \overline{v}_\delta)$ and $(\hat{u}_\delta, \hat{v}_\delta)$ derived from \eqref{energy bound continue}. 

\begin{corollary}\label{cor h1 bound}
For any $T >0$, there exists a constant $C_{g,T} >0$, independent of $\delta$, such that 

\begin{equation}\label{bound overline+hat}
\sup_{t\in [0,T]} \|\overline{u}_\delta (t)\|_{H^1(\Omega)}+ \sup_{t\in [0,T]} \|\overline{v}_\delta (t)\|_{H^1(\Omega)}+ \int_0^T \int_\Omega \left(\|\partial_t \hat{u}_\delta(s)\|_2^2  + \|\partial_t \hat{v}_\delta(s)\|_2^2\right) ds < C_{g,T}.
\end{equation}
%\begin{equation}\label{bound overline}
%\sup_{t\in [0,T]} \|\overline{u}_\delta (t)\|_{H^1(\Omega)}, \sup_{t\in [0,T]} \|\overline{v}_\delta (t)\|_{H^1(\Omega)} \leqslant C_{g,T}
%\end{equation}
%
%\begin{equation}\label{bound hat}
%\int_0^T \int_\Omega \left(\|\partial_t \hat{u}_\delta(s)\|_2^2  + \|\partial_t \hat{v}_\delta(s)\|_2^2\right) ds < C_{g,T}.
%\end{equation} 
\end{corollary}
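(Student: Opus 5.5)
The plan is to start from the continuous-in-time form \eqref{energy bound continue} of the discrete energy bound and close it with a Gronwall-type argument. The dangerous term is the cross term $2\int_0^{t_\delta}\int_\Omega (\eta_\varepsilon + \overline{v}_\delta^2)\nabla \overline{u}_\delta\cdot\nabla\partial_t\hat g_\delta$, because it contains the unknown $\nabla \overline{u}_\delta$. The other terms on the right are controlled by $g$ alone.

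For the data terms, the maximum principle \eqref{max prin delta} gives $0\leqslant \overline{v}_\delta\leqslant 1$, hence $\eta_\varepsilon+\overline{v}_\delta^2\leqslant 1+\eta_\varepsilon$. For the interpolant $\hat g_\delta$, on each $[i\delta,(i+1)\delta)$ we have $\partial_t\hat g_\delta = (g_{i+1}-g_i)/\delta = \frac1\delta\int_{i\delta}^{(i+1)\delta}\partial_t g$. Jensen's inequality then yields $\int_0^T\|\partial_t \hat g_\delta\|_{H^1}^2\leqslant \int_0^{T+\delta}\|\partial_t g\|_{H^1}^2$, which is finite by (H.1) since $g\in H^1_{loc}([0,\infty);H^1(\Omega))$. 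This is exactly why Proposition~\ref{prop : cv g} is not needed here. Consequently $\int_0^{t_\delta}\|\partial_t\hat g_\delta\|_2^2$ and $(1+\eta_\varepsilon)\int_0^{t_\delta}\|\nabla\partial_t\hat g_\delta\|_2^2$ are bounded by a constant $K_{g,T}$ independent of $\delta$ (say for $\delta\leqslant 1$).

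For the cross term, apply Young's inequality pointwise in time:
\[
2(1+\eta_\varepsilon)\|\nabla \overline{u}_\delta(s)\|_2\|\nabla\partial_t\hat g_\delta(s)\|_2\leqslant \|\nabla\partial_t\hat g_\delta(s)\|_2\bigl(1+ (1+\eta_\varepsilon)^2\|\nabla \overline{u}_\delta(s)\|_2^2\bigr).
\]
Since $AT_\varepsilon(u,v)\geqslant \eta_\varepsilon\|\nabla u\|_2^2$, this is at most $\|\nabla\partial_t\hat g_\delta(s)\|_2\,(1+C_\varepsilon AT_\varepsilon(\overline u_\delta(s),\overline v_\delta(s)))$. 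Setting $E(t)=AT_\varepsilon(\overline u_\delta(t),\overline v_\delta(t))$, we obtain
\[
E(t)\leqslant A_{g,T} + C_\varepsilon\int_0^{t}\|\nabla\partial_t\hat g_\delta(s)\|_2\,E(s)\,ds.
\]
Here $\int_0^T\|\nabla\partial_t\hat g_\delta\|_2\leqslant \sqrt T\,(\int_0^{T+\delta}\|\partial_t g\|_{H^1}^2)^{1/2}$. Because $E$ is piecewise constant and the integral runs only up to $t_\delta$, the integral form of Gronwall's lemma applies (or one can argue discretely at the level of \eqref{energy bound discret}). It gives $\sup_{[0,T]}E\leqslant A_{g,T}\exp(C_\varepsilon\sqrt T\,\|\partial_t g\|_{L^2(0,T+1;H^1)})$.

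Having bounded $E$, the gradient bounds follow because $\|\nabla\overline u_\delta\|_2^2\leqslant E/\eta_\varepsilon$ and $\|\nabla \overline v_\delta\|_2^2\leqslant E/\varepsilon$. The $L^2$ parts come from the maximum principle \eqref{max prin delta} together with the boundedness of $\Omega$: we have $|\overline u_\delta|\leqslant \max(\|g\|_\infty,\|u_0\|_\infty)$ and $0\leqslant\overline v_\delta\leqslant1$. Reinserting the bound on $E$ into \eqref{energy bound continue} controls $\int_0^{t_\delta}(\|\partial_t\hat u_\delta\|_2^2+\|\partial_t\hat v_\delta\|_2^2)$. To reach $T$ itself, apply the estimate at time $T+\delta$, so that $t_\delta\geqslant T$. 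The main obstacle is the Gronwall step: it needs care with the discrete/piecewise-constant structure. The safest route is to run it directly on the recursion \eqref{bound discret 1}, written as $E_i\leqslant E_{i-1}(1+c\,a_i)+b_i$ with $\sum a_i,\sum b_i$ bounded, which gives $E_N\leqslant (E_0+\sum b_i)e^{c\sum a_i}$.
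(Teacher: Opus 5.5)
Your proof is correct and follows the paper's route: start from \eqref{energy bound continue}, absorb the cross term by Young's inequality, close with Gronwall, then reinsert. The paper's Young step is the pointwise weighted bound $2(\eta_\varepsilon+\overline{v}_\delta^2)\nabla\overline{u}_\delta\cdot\nabla\partial_t\hat{g}_\delta\leqslant(\eta_\varepsilon+\overline{v}_\delta^2)\bigl(|\nabla\overline{u}_\delta|^2+|\nabla\partial_t\hat{g}_\delta|^2\bigr)$, which is controlled by the energy density directly, so its Gronwall kernel is $1$ with no $1/\eta_\varepsilon$. Your kernel $C_\varepsilon\|\nabla\partial_t\hat{g}_\delta\|_2\in L^1(0,T)$ works equally well. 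Your Jensen bound on $\partial_t\hat{g}_\delta$ improves on the paper's appeal to \autoref{prop : cv g}, which assumes $g\in H^2_{loc}$ beyond (H.1), and your discrete Gronwall on \eqref{bound discret 1} handles the piecewise-constant and $t_\delta$ bookkeeping more cleanly than the paper does.
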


\begin{proof}
Fix $t >0$ and set $t_\delta = \left\lceil \dfrac{t}{\delta}\right\rceil$. Using that $2ab\leqslant a^2 +b^2$ in the right-hand side of \eqref{energy bound continue}, 

\begin{align*}
&AT_\varepsilon (\overline{u}_\delta (t) , \overline{v}_\delta (t)) + \int_0^{t_\delta} \left(\|\partial_t \hat{u}_\delta (s)\|_2^2 + \|\partial_t \hat{v}_\delta (s)\|_2^2\right)dx ds \\
&\leqslant AT_\varepsilon (u_0 , v_0)\\
&\quad + \int_0^{t_\delta} \|\partial_t \hat{g}_\delta (s)\|_2^2ds + \int_0^{t_\delta}\int_\Omega (\eta_\varepsilon + \overline{v}_\delta(s)^2)\left(|\partial_t \nabla \hat{g}_\delta (s)|^2 + 2 \nabla \overline{u}_\delta (s)\cdot \nabla \partial_t \hat{g}_\delta (s)\right) dxds\\
&\leqslant AT_\varepsilon (u_0 , v_0) + \int_0^{t_\delta} \|\partial_t \hat{g}_\delta(s)\|_2^2 ds + 2(1+\eta_\varepsilon) \int_0^{t_\delta}\int_\Omega |\partial_t \nabla \hat{g}_\delta(s)|^2 dxds\\
&\quad +\int_0^{t_\delta} \int_\Omega (\eta_\varepsilon + \overline{v}_\delta^2(s)) |\nabla \overline{u}_\delta (s)|^2 dxds .
\end{align*}
%&\leqslant AT_\varepsilon (u_0, v_0) + \widetilde{\mathcal{C}_g} + \int_0^t AT_\varepsilon (\overline{u}_\delta (s), \overline{v}_\delta (s)).\\ 
From \autoref{prop : cv g}, the sequence $(\|\partial_t \hat{g}_\delta\|_{L^2((0,T);H^1(\Omega))})_\delta$ is uniformly bounded, so there exists a constant $C_{g,T}$ such that  

\begin{multline}\label{gron}
AT_\varepsilon (\overline{u}_\delta (t) , \overline{v}_\delta (t)) + \int_0^{t_\delta} \left(\|\partial_t \hat{u}_\delta (s)\|_2^2 + \|\partial_t \hat{v}_\delta (s)\|_2^2 \right)ds\\
\leqslant  AT_\varepsilon (u_0, v_0) + C_{g,T} + \int_0^{t_\delta} AT_\varepsilon (\overline{u}_\delta (s), \overline{v}_\delta (s))ds.\\ 
\end{multline}
Applying Gronwall's lemma in \eqref{gron} yields for all $t\in [0,T]$
\begin{equation}\label{gron2}
AT_\varepsilon (\overline{u}_\delta (t) , \overline{v}_\delta (t)) \leqslant (AT_\varepsilon (u_0, v_0) + C_{g,T}) e^T,
\end{equation} 
and injecting it in \eqref{gron} implies in turn for all $t\in [0,T]$
\begin{equation}\label{gron 3}
\int_0^{t_\delta} \left(\|\partial_t \hat{u}_\delta(s)\|_2^2 + \|\partial_t \hat{v}_\delta(s)\|_2^2\right)ds \leqslant  AT_\varepsilon (u_0, v_0) + C_{g,T} + \int_0^t AT_\varepsilon (\overline{u}_\delta (s), \overline{v}_\delta (s))ds \leqslant C_{g,T},
\end{equation}
which implies \eqref{bound overline+hat}. 
\end{proof}
We are now ready to pass to the limit in \eqref{EL approché t fixe3}, \eqref{EL approché t fixe4} and \eqref{energy bound continue}. 

\begin{proposition}\label{extraction}
There exists a pair $(u,v) \in \left(L_{loc}^\infty(\mathbb{R}_+; H^1(\Omega)) \cap H_{loc}^1(\mathbb{R}_+; L^2(\Omega))\right)^2$ and a (not relabeled) subsequence $(\overline{u}_\delta, \overline{v}_\delta, \hat{u}_\delta, \hat{v}_\delta)$ satisfying :  

\begin{enumerate}
\item $(\hat{u}_\delta , \hat{v}_\delta) \rightarrow (u,v) \, \text{strongly} \text{ in } (C^0([0,T];L^2(\Omega))^2$ for every $T >0$,

\item $(\partial_t \hat{u}_\delta , \partial_t \hat{v}_\delta) \rightharpoonup (\partial_t u,\partial_t v) \, \text{weakly} \text{ in } (L^2([0,T]; L^2(\Omega)))^2$,

\item $(\overline{u}_\delta, \overline{v}_\delta) \longrightarrow (u,v)$ strongly in $[L^2_{loc}((0,+\infty); L^2(\Omega))]^2$ and almost everywhere on $(0,+\infty)\times \Omega$,

\item $(\overline{u}_\delta (t), \overline{v}_\delta(t)) \rightharpoonup (u(t),v(t)) \, \text{weakly } \text{ in } [H^1(\Omega)]^2$ for almost every  $t >0$, 

\item $(\overline{u}_\delta , \overline{v}_\delta ) \rightharpoonup (u,v)) \, \text{weakly } \text{ in } [L^2_{loc}((0, +\infty) ;H^1(\Omega))]^2$, 

\item $(\overline{u}_\delta, \overline{v}_\delta) \overset{\star}{\rightharpoonup} (u, v)$ weakly-$\star$ in $[L^\infty_{loc}((0,+\infty) \times \Omega)]^2$,

\item $(u(t) - g(t), v(t) - 1) \in H_0^1 (\Omega)$ for almost every $t >0$,  
\end{enumerate}
\end{proposition}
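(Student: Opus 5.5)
We will extract the subsequence with standard compactness arguments, using only the uniform bounds of \autoref{cor h1 bound} and the maximum principle \eqref{max prin delta}, together with a diagonal argument over $T\in\mathbb{N}$.

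\textbf{Items (1)--(2).} \autoref{cor h1 bound} gives that $(\hat{u}_\delta,\hat{v}_\delta)$ is bounded in $H^1((0,T);L^2(\Omega))$. Since $\hat{u}_\delta(t)$ is a convex combination of $u_i$ and $u_{i+1}$, it is also bounded in $L^\infty((0,T);H^1(\Omega))$. The embedding $H^1(\Omega)\hookrightarrow L^2(\Omega)$ is compact because $\Omega$ is bounded and Lipschitz. We then apply the Aubin--Lions--Simon lemma: a bounded subset of $L^\infty(0,T;H^1)$ whose time derivatives are bounded in $L^2(0,T;L^2)$ is relatively compact in $C^0([0,T];L^2)$. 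Alternatively, Arzelà--Ascoli applies, with equicontinuity coming from
\[\|\hat{u}_\delta(t)-\hat{u}_\delta(s)\|_2\le C|t-s|^{1/2}\]
and pointwise relative compactness from the $H^1$ bound. This gives (1) for each $T$, and a diagonal extraction over $T\in\mathbb{N}$ makes it hold for all $T$. For (2), the weak compactness of $\partial_t\hat{u}_\delta$ in $L^2((0,T)\times\Omega)$ gives a weak limit, which is identified with $\partial_t u$ by testing against $\psi\in C_c^\infty$ and integrating by parts in time.

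\textbf{Item (3).} This is the key comparison between the piecewise constant and piecewise affine interpolants. On $[i\delta,(i+1)\delta)$ we have
\[\|\overline{u}_\delta(t)-\hat{u}_\delta(t)\|_2\le \|u_{i+1}-u_i\|_2 .\]
Hence
\[\int_0^T\|\overline{u}_\delta-\hat{u}_\delta\|_2^2\,dt\le \delta\sum_{i}\|u_{i+1}-u_i\|_2^2=\delta^2\int_0^{T+\delta}\|\partial_t\hat{u}_\delta\|_2^2\le C_{g,T}\,\delta^2\to0 .\]
Combined with (1), this gives strong convergence in $L^2_{loc}$. A further subsequence then converges almost everywhere, again with a diagonal extraction.

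\textbf{Items (4)--(7).} For (5), $\overline{u}_\delta$ is bounded in $L^\infty((0,T);H^1)$, so it has a weak limit in $L^2((0,T);H^1)$; this limit coincides with $u$ by (3). For (6), boundedness in $L^\infty$ from \eqref{max prin delta} gives weak-$\star$ compactness, and the limit is again identified by (3). This also shows $u,v\in L^\infty_{loc}(\mathbb{R}_+;H^1)$, via weak lower semicontinuity of the norm.

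Item (4) is the delicate point, since weak convergence for almost every $t$ along a single subsequence is not automatic. We plan to use (3) first: after extraction, $\overline{u}_\delta(t)\to u(t)$ strongly in $L^2(\Omega)$ for almost every $t$. Indeed, $\int_0^T\|\overline{u}_\delta-u\|_2^2\to0$, so a subsequence has $\|\overline{u}_\delta(t)-u(t)\|_2\to0$ for a.e.\ $t$. For such $t$, $\sup_\delta\|\overline{u}_\delta(t)\|_{H^1}\le C_{g,T}$. Every subsequence therefore has an $H^1$-weakly convergent further subsequence, and its limit must be $u(t)$ by the $L^2$ convergence. Hence the whole sequence converges weakly in $H^1$ at that $t$, with no $t$-dependent extraction.

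For (7), $\overline{u}_\delta(t)-\overline{g}_\delta(t)\in H^1_0(\Omega)$ and $\overline{v}_\delta(t)-1\in H^1_0(\Omega)$. We use \autoref{prop : cv g}(1) for $\overline{g}_\delta(t)\to g(t)$, or alternatively (2) with a further extraction, keeping only almost every $t$. Since $H^1_0$ is weakly closed, the limits $u(t)-g(t)$ and $v(t)-1$ lie in $H^1_0(\Omega)$ for a.e.\ $t$.

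The only real obstacle is bookkeeping: all extractions must be made once, along one countable diagonal sequence, and the a.e.-$t$ statements must be deduced from (3) rather than by extracting separately at each $t$.
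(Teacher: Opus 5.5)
Your proposal is correct and follows essentially the paper's argument: Ascoli in $C^0([0,T];L^2(\Omega))$ (equicontinuity from the bound on $\partial_t\hat{u}_\delta$, pointwise compactness from the $H^1$ bound and Rellich) with a diagonal extraction, comparison of the two interpolants for (3), the subsequence-of-a-subsequence uniqueness argument for (4), and weak closedness of $H^1_0(\Omega)$ for (7). The differences are minor. The paper compares the interpolants pointwise in time, $\|\overline{u}_\delta(t)-\hat{u}_\delta(t)\|_2\leqslant C\sqrt{\delta}$, which already gives $\overline{u}_\delta(t)\to u(t)$ in $L^2(\Omega)$ for every $t$, so your extra extraction for (4) is unnecessary though harmless. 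The paper also obtains (5) by integrating (4) in time via Pettis' theorem and dominated convergence, whereas you use weak compactness in $L^2((0,T);H^1(\Omega))$ directly.
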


\begin{proof}
\textbf{Step 1 :} There exists a pair $(u,v) \in C^0([0,+\infty);L^2(\Omega))$ and a converging subsequence  $(\hat{u}_\delta, \hat{v}_\delta)$ such that $(\hat{u}_\delta, \hat{v}_\delta) \rightarrow (u(t) , v(t))$ strongly in $(C^0([0,T];L^2(\Omega))^2$ for every $T >0$.  

\smallskip

Let us show that, for any $T >0$, the sequence $(\hat{u}_\delta, \hat{v}_\delta)_\delta$, satisfies the hypothesis of the Ascoli theorem. 

\smallskip

\begin{itemize}
\item[$\bullet$] For any fixed time $t$, the sets $\{\|\hat{u}_\delta (t)\|_2^2, \delta >0\}$ and $\{\|\hat{v}_\delta (t)\|_2^2, \delta >0\}$ are relatively compact in $L^2(\Omega)$ for the strong topology. Indeed, according to \autoref{Maximum principle},

\[\sup_{[0,T]} \|\hat{u}_\delta(t)\|_{\infty , \Omega}\leqslant \max (\|u_0\|_{\infty, \Omega} , \|g\|_{\infty, (0,+\infty)\times \Omega}), \quad \sup_{[0,T]} \|\hat{v}_\delta(t)\|_{\infty , \Omega} \leqslant 1,\]

and according to \autoref{cor h1 bound}, 

\begin{equation}\label{compacitestar}
\sup_{[0,T]} \|\nabla \hat{u}_\delta(t)\|_{L^2(\Omega)} \leqslant \sup_{[0,T]} \|\nabla \overline{u}_\delta(t)\|_{L^2(\Omega)} \leqslant C_{T,g},
\end{equation}

\[\sup_{[0,T]} \|\nabla \hat{v}_\delta(t)\|_{L^2(\Omega)} \leqslant \sup_{[0,T]} \|\nabla \overline{v}_\delta(t)\|_{L^2(\Omega)} \leqslant C_{T,g}.\]
Hence, the sets $\{\|\hat{u}_\delta (t)\|_2^2, \delta >0\}$ and $\{\|\hat{v}_\delta (t)\|_2^2, \delta >0\}$ are bounded in $[H^1(\Omega)]^2$, so they are compact in $[L^2(\Omega)]^2$ according to the Rellich-Theorem. %It means that for any $t>0$, $\{(\hat{u}_\delta(t), \hat{v}_\delta), \delta >0\}$ is compact for the strong topology of $(L^2(\Omega))^2$.  

\smallskip

\item[$\bullet$] The sequence $(\hat{u}_\delta, \hat{v}_\delta)_\delta$ is equi-continuous for the $L^2(\Omega)$ norm. This is a consequence of \eqref{bound overline+hat} and Jensen inequality : for any $t<s$, 
\begin{equation}\label{eq : prop3.4 1}
\|\hat{u}_\delta (t) - \hat{u}_\delta(s)\|_2^2 = \left\|\int_t^s \partial_t \hat{u}_\delta\right\|_2^2 \leqslant (s-t) \int_t^s \|\partial_t \hat{u}_\delta \|_2^2\leqslant (s-t) C_{g,T}.
\end{equation}
%\begin{multline*}
%\|\hat{u}_\delta (t) - \hat{u}_\delta(s)\|_2^2 = \left\|\int_t^s \partial_t \hat{u}_\delta\right\|_2^2 \leqslant (s-t) \int_t^s \|\partial_t \hat{u}_\delta \|_2^2\leqslant (s-t) C_{g,T}\\
%\leqslant (s-t) e^{T} (AT_\varepsilon (u_0 ,v_0) + C_{g,T}).
%\end{multline*}
A similar inequality occurs for $v$. 
\end{itemize} 
Hence, according to Ascoli's theorem, the sets $\{(\hat{u}_\delta, \hat{v}_\delta), \delta >0\}$ is a compact subset of $C^0([0,T]; L^2(\Omega))$ for any $T >0$. Using a diagonal extraction argument, we deduce the existence of a pair $(u,v) \in [C^0([0, +\infty) ; L^2(\Omega))]^2$ and a (not relabelled) converging subsequence $(\hat{u}_\delta, \hat{v}_\delta)$ to $(u,v)$ strongly in $[C^0([0,+\infty) ; L^2(\Omega))]^2$. 
This proves item (1).  

\medskip

\textbf{Step 2 :} $(\partial_t \hat{u}, \partial_t \hat{v}) \rightharpoonup (\partial_t u, \partial_t v)$ weakly in $[L_{loc}^2((0,\infty); L^2(\Omega))]^2$.  

We prove that $\partial_t \hat{u}_\delta \rightharpoonup \partial_t u$ weakly in $[L^2([0,T];L^2(\Omega)]^2$. Indeed, we first have $(\partial_t \hat{u}_\delta, \partial_t \hat{v}_\delta) \rightharpoonup  (\partial_t u, \partial_t v)$ in $[\mathcal{D}'((0,T)\times \Omega)]^2$. 
%for any test function $\varphi\in C_c^1((0,T)\times \Omega)$, 
%
%\[
%\int_0^T\int_\Omega \partial_t \hat{u}_\delta \varphi = -\int_0^T \int_\Omega \hat{u}_\delta \partial_t \varphi \longrightarrow -\int_0^T \int_\Omega u \partial_t \varphi = \int_0^T \int_\Omega \partial_t u \varphi.\]
%A similar convergence holds for $v$. We deduce that $(\partial_t \hat{u}, \partial_t \hat{v}) \rightharpoonup (\partial_t u, \partial_t v)$ weakly in $\mathcal{D}'((0,+\infty)\times \Omega)$. 
Using \autoref{cor h1 bound}, the sequence $(\partial_t \hat{u}_\delta, \partial_t \hat{v}_\delta)$ is bounded in $[L_{loc}^2((0,+\infty);L^2(\Omega))]^2$, so it admits a converging subsequence to an element in $[L_{loc}^2((0,+\infty);L^2(\Omega))]^2$. This element has to agree with $(\partial_t u, \partial_t v)$. This proves item (2). In particular, $(u,v)\in [H^1((0,T);L^2(\Omega))]^2$. 

\medskip

\textbf{Step 3 :} We now prove item (3).
%\textbf{Step 3 :} For almost every $t >0$, $(\overline{u}_\delta (t), \overline{v}_\delta (t)) \rightharpoonup (u(t), v(t))$ in $H^1(\Omega)$. 

\smallskip

%Now we prove that $(\overline{u}_\delta , \overline{v}_\delta) \rightharpoonup (u, v)$ in $L_t^2 H_x^1$. 
Fix a time $t  >0$. We prove that $\overline{u}_\delta (t) \rightarrow u(t)$ strongly in $L^2(\Omega)$. Indeed, let $i= \lfloor t/ \delta \rfloor$. Observe that with \autoref{Maximum principle} and the strong convergence $\hat{u}_\delta(t) \rightarrow u(t)$ in $L^2(\Omega)$,

\begin{multline*}
\|\overline{u}_\delta (t) - u(t) \|_2 = \|\hat{u}_\delta (i\delta) - u(t)\|_2 \leqslant \|\hat{u}_\delta (i\delta) - \hat{u}_\delta (t)\|_2 + \|\hat{u}_\delta (t) -u(t)\|_2 \\
\leqslant C_{g,T}\sqrt{\delta} + \|\hat{u}_\delta (t) -u(t)\|_2 \rightarrow 0.
\end{multline*}

Using \autoref{Maximum principle} and item (1), we can apply Lebesgue convergence theorem to conclude that $\overline{u}_\delta \rightarrow u$ strongly in $L_{loc}^2((0,+\infty)\times \Omega)$. We also deduce that there exists a (not relabelled) subsequence $\overline{u}_\delta$ that converges to $u$ almost everywhere on $(0,+\infty)\times \Omega$. A similar argument holds for $\overline{v}_\delta$, which concludes (3). 

\medskip

\textbf{Step 4 :} We prove items (4) and (5).

\smallskip

Recall from \autoref{Maximum principle} and \eqref{compacitestar} that the sequence $(\hat{u}_\delta(t) )$ is bounded in $H^1(\Omega)$. Hence, it admits a weakly-converging subsequence in $H^1(\Omega)$ and then, again due to Rellich theorem, a strong converging subsequence in $L^2(\Omega)$. Thanks to item (1), we deduce that $\hat{u}_\delta(t) \rightharpoonup u(t)$ weakly in $H^1(\Omega)$. We deduce that $t\mapsto u(t) \in H^1(\Omega)$ is weakly measurable as the limit of weak measurable functions. Since $H^1(\Omega)$ is separable, according to Pettis theorem (see \cite[Theorem III.1.1]{S97}), this map is in fact strongly measurable. We can integrate this convergence in time using Lebesgue dominated convergence theorem and \eqref{compacitestar} to conclude item (5). A similar argument applies to $\overline{v}_\delta$. 

\smallskip

\textbf{Step 5 :} The pair $(u, v)$ belongs to $[L^\infty_{loc}((0,+\infty); H^1(\Omega))\cap H^1_{loc}((0,+\infty) ; L^2(\Omega))]^2$. 

\smallskip

Using again \autoref{Maximum principle} and by the weak lower semi-continuity of $\|.\|_{H^1(\Omega)}$, we deduce that for all $t\in [0,T]$,

\[\|u(t)\|_{H^1(\Omega)} \leqslant \liminf\limits_{\delta\rightarrow 0} \|\overline{u}_\delta (t)\|_{H^1(\Omega)} \leqslant C_{g,T},\]
which proves that $u\in L_{loc}^\infty((0,+\infty); H^1(\Omega))$. Morevoer, since $\partial_t u \in L_{loc}^2([0,\infty);L^2(\Omega))$, we deduce that $u\in H^1_{loc}((0,\infty); L^2(\Omega))$.  
The similar reasoning applies to $\overline{v}_\delta$.  

\smallskip

\textbf{Step 6 :} Item (6) is a consequence of \autoref{Maximum principle} and \eqref{max prin delta}.

%Let $\varphi \in L^1((0,+\infty)\times \Omega)$. According to \autoref{max prin delta}, the function $\overline{u}_\delta(t,x) \varphi(t,x)$ is bounded almost everywhere on $(0,+\infty)\times \Omega$ by $\max( \|g\|_{\infty}, \|u_0\|_{\infty, \Omega})$ and according to (3), it converges almost everywhere on $(0,+\infty) \times \Omega$ to $u(t,x) \varphi(t,x)$. According to Lebesgue convegence theorem, we conclude that    
%\[\int_0^{+\infty}\int_\Omega \overline{u}_\delta (t,x)\varphi (t,x) dx dt \rightarrow \int_0^{+\infty} \int_\Omega u(t,x) \varphi(t,x) dx dt,\]
%which implies that $\overline{u}_\delta \overset{\star}{\rightharpoonup} u \in L^\infty((0,+\infty) \times \Omega)$ weakly-$\star$. A similar argument holds for $\overline{v}_\delta$.

\textbf{Step 7 :} We prove that $(u(t) - g(t), v(t)-1) \in H_0^1(\Omega)$ for almost every $t >0$. 

\smallskip

Indeed, according to Step 4 and \autoref{prop : cv g}, for almost every $t >0$, $\overline{u}_\delta (t) - \overline{g}_\delta (t) \rightharpoonup u(t) -g(t)$ weakly in $H^1(\Omega)$. Since $\overline{u}_\delta (t) - \overline{g}_\delta (t) \in H_0^1(\Omega)$, we deduce that $u(t) -g(t) \in H_0^1(\Omega)$ as well. The similar reasoning allows to conclude that $v(t) - 1\in H_0^1(\Omega)$ for almost every $t >0$ as well. This proves item (7).

Finally, as $(\overline{u}_\delta, \overline{v}_\delta \rightarrow (u,v)$ strongly in $L^2([0,T];L^2(\Omega))$, we can extract a subsequence that converges a.e on $[0,T]\times \Omega$, which proves (5).
\end{proof}

We can easily deduce the maximum principle for the pair $(u,v)$, which we state in the following proposition. 

\begin{proposition}\label{Maximum principle vrai}
Recall that $0 \leqslant v_0(x) \leqslant 1$ for all $x$. Then $0 \leqslant v(t,x) \leqslant 1$.
The pair $(u,v)$ satisfies 

\[0\leqslant v\leqslant 1, \quad \|u\|_\infty \leqslant \max(\|g\|_\infty, \|u_0\|_{\infty,\Omega}).\]
\end{proposition}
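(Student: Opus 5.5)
The plan is to pass the discrete maximum principle of \autoref{Maximum principle} to the limit along the subsequence of \autoref{extraction}. Set $M = \max(\|g\|_{\infty,(0,+\infty)\times\Omega}, \|u_0\|_{\infty,\Omega})$. By \eqref{max prin delta}, for every $\delta>0$ we have $0\leqslant \overline{v}_\delta \leqslant 1$ and $|\overline{u}_\delta|\leqslant M$ almost everywhere on $(0,+\infty)\times\Omega$. The bounds hold for $\delta$-independent constants, so they are closed conditions that should survive any reasonable limit.

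The most direct route uses item (3) of \autoref{extraction}: $(\overline{u}_\delta,\overline{v}_\delta)\to(u,v)$ almost everywhere on $(0,+\infty)\times\Omega$. Concretely, take $E_\delta$ to be the null set where the discrete bounds fail, and $E$ the null set where pointwise convergence fails. Off the countable union $E\cup\bigcup_\delta E_\delta$, which is still null since the subsequence is countable, each inequality $0\leqslant \overline{v}_\delta(t,x)\leqslant 1$ and $-M\leqslant \overline{u}_\delta(t,x)\leqslant M$ passes to the pointwise limit. This gives $0\leqslant v\leqslant 1$ and $|u|\leqslant M$ almost everywhere, which is \eqref{eq : principe maximum}. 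At $t=0$ the statement for $v$ is just the hypothesis (H.2).

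As a cross-check that does not rely on extracting an a.e.\ convergent subsequence, I would also argue by weak-$\star$ convergence, item (6). The sets $\{w\in L^\infty : 0\leqslant w\leqslant 1 \text{ a.e.}\}$ and $\{w : |w|\leqslant M\text{ a.e.}\}$ are convex and weak-$\star$ closed. For instance, testing against nonnegative $\varphi\in L^1$, the inequality $\iint (1-\overline{v}_\delta)\varphi\geqslant 0$ passes to the limit, and this gives $v\leqslant 1$ a.e. The same works for the other three inequalities. Alternatively, strong $L^2_{loc}$ convergence of $\hat u_\delta$ in $C^0([0,T];L^2)$, item (1), gives the bound for every $t$, including continuity in time, since $\hat u_\delta(t)$ is a convex combination of $u_i$ and $u_{i+1}$ and therefore also satisfies the bounds.

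There is no real obstacle. The only point needing care is that $M$ is defined with the supremum of $g$ over the whole time range, so it is uniform in $\delta$ (the $g_i$ are samples of $g$). One must also make sure that the exceptional null sets stay countable, which holds because the extraction is along a sequence $\delta_n\to 0$.
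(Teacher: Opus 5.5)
Your proposal is correct and matches the paper's proof, which simply combines the discrete bounds \eqref{max prin delta} with the almost-everywhere convergence in item (3) of \autoref{extraction}. Your weak-$\star$ argument via item (6) and your $C^0([0,T];L^2(\Omega))$ argument via item (1) are also valid, but they are not needed.
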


\begin{proof}
This is a consequence of \eqref{max prin delta} and item (3) of \autoref{extraction}. 
%This is a consequence of \autoref{extraction}, item (5). 
\end{proof}

We now prove that the pair $(u,v)$ is a solution of \eqref{système de départ} in the sense of \autoref{solution faible}.  

\begin{proposition}
The pair $(u,v)$ is a solution of \eqref{solution faible}. 
\end{proposition}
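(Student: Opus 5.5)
We pass to the limit $\delta\to 0$ in \eqref{EL approché t fixe3} and \eqref{EL approché t fixe4} for a fixed $\varphi\in\mathcal{C}_c^\infty((0,+\infty)\times\Omega)$, with support in $(0,T)\times\Omega$. The regularity $(u,v)\in\mathcal{F}$ is already given by \autoref{extraction} and \autoref{Maximum principle vrai}.

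\textbf{The linear terms and the $u$-equation.} The terms $\int\partial_t\hat u_\delta\varphi$, $\int\partial_t\hat v_\delta\varphi$ converge by item (2) of \autoref{extraction}. The terms $\varepsilon\int\nabla\overline v_\delta\cdot\nabla\varphi$ and $\int\frac{\overline v_\delta-1}{4\varepsilon}\varphi$ converge by items (5) and (3). For $\int(\eta_\varepsilon+\overline v_\delta^2)\nabla\overline u_\delta\cdot\nabla\varphi$, note that $(\eta_\varepsilon+\overline v_\delta^2)\nabla\varphi\to(\eta_\varepsilon+v^2)\nabla\varphi$ strongly in $L^2$. This follows from dominated convergence, using the a.e.\ convergence of item (3) and the bound $0\le\overline v_\delta\le1$. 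Combined with the weak $L^2$ convergence of $\nabla\overline u_\delta$, this gives \eqref{eq : eq in u}.

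\textbf{The quadratic term: strong convergence of $\nabla\overline u_\delta$.} The only delicate term is $\int\overline v_\delta|\nabla\overline u_\delta|^2\varphi$, and weak convergence is not enough for it. We aim to show $\nabla\overline u_\delta\to\nabla u$ strongly in $L^2_{loc}((0,+\infty);L^2(\Omega))$, or at least in $L^2((0,T)\times\Omega)$. The plan is an energy identity argument on the $u$-equation.

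First test \eqref{EL approché t fixe1} with $\psi(t)(\overline u_\delta-\overline g_\delta)$, where $\psi\in\mathcal{C}_c^\infty((0,T))$ and $\psi\ge0$. This function belongs to $H^1_0$ at each fixed time and is admissible by the density remark following \eqref{EL approché t fixe4}. It gives
\[\iint\psi(\eta_\varepsilon+\overline v_\delta^2)|\nabla\overline u_\delta|^2=-\iint\psi\,\partial_t\hat u_\delta(\overline u_\delta-\overline g_\delta)+\iint\psi(\eta_\varepsilon+\overline v_\delta^2)\nabla\overline u_\delta\cdot\nabla\overline g_\delta .\]
The right-hand side converges. In the first term, $\partial_t\hat u_\delta$ converges weakly and $\overline u_\delta-\overline g_\delta$ converges strongly in $L^2$, by item (3) and \autoref{prop : cv g}. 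The second term is treated like the $u$-equation above.

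Next, in the limit equation \eqref{eq : eq in u}, test with $\psi(u-g)$; this is justified by density, since $u-g\in L^2H^1_0\cap L^\infty$. The resulting limit right-hand side equals $\iint\psi(\eta_\varepsilon+v^2)|\nabla u|^2$. Hence $\iint\psi(\eta_\varepsilon+\overline v_\delta^2)|\nabla\overline u_\delta|^2\to\iint\psi(\eta_\varepsilon+v^2)|\nabla u|^2$.

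To conclude, write $a_\delta=(\eta_\varepsilon+\overline v_\delta^2)^{1/2}\nabla\overline u_\delta$. It converges weakly to $(\eta_\varepsilon+v^2)^{1/2}\nabla u$, since it is the product of a bounded, a.e.\ convergent factor and a weakly convergent one. Its weighted norms converge by the previous step, so $a_\delta$ converges strongly in $L^2(\psi\,dx\,dt)$. Dividing by the coefficient, which is at least $\eta_\varepsilon>0$ and converges a.e., gives $\nabla\overline u_\delta\to\nabla u$ strongly in $L^2_{loc}$ on the support of $\psi$.

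Then $\overline v_\delta|\nabla\overline u_\delta|^2\varphi\to v|\nabla u|^2\varphi$ in $L^1$: the factor $|\nabla\overline u_\delta|^2$ converges in $L^1$, and $\overline v_\delta\varphi$ is bounded and converges a.e., so Vitali or dominated convergence applies after extracting a subsequence. This yields \eqref{eq : eq in v}.

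\textbf{Main obstacle.} The main obstacle is justifying the test functions $\psi(\overline u_\delta-\overline g_\delta)$ and $\psi(u-g)$. For the discrete one, it is exact at each fixed $t$, since \eqref{EL approché t fixe1} holds with $H^1_0$ test functions by density. For the limit, $\partial_t u\in L^2L^2$ and $u\in L^2H^1\cap L^\infty$ make $\psi(u-g)$ an admissible test function in the extended class.
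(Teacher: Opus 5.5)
Your proof is correct, but it reaches the key step, the strong convergence of $\nabla\overline u_\delta$, by a different route from the paper.

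The paper tests the discrete $u$-equation with the localized function $\varphi(\overline u_\delta-u)$, where $\varphi\geqslant 0$ is a space--time cutoff. The terms containing $\partial_t\hat u_\delta$ and $\nabla\varphi$ vanish by weak--strong pairing, because $\overline u_\delta-u\to 0$ strongly in $L^2_{loc}$. This leaves $\iint(\eta_\varepsilon+\overline v_\delta^2)\varphi\,\nabla\overline u_\delta\cdot\nabla(\overline u_\delta-u)\to 0$, and strong convergence follows directly. That argument never uses the limit equation \eqref{eq : eq in u}, the boundary datum, or item (7) of \autoref{extraction}. It only yields convergence in $L^2_{loc}((0,+\infty)\times\Omega)$, which is all \eqref{eq : eq in v} requires.

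You instead run an energy-identity (Minty-type) argument with three ingredients:
\begin{itemize}
\item you test at each fixed time with $\psi(t)(\overline u_\delta-\overline g_\delta)$, which is admissible because $\overline u_\delta(t)-\overline g_\delta(t)\in H^1_0(\Omega)$;
\item you pass to the limit on the right-hand side using $\overline g_\delta\to g$ in $L^2_{loc}((0,+\infty);H^1(\Omega))$;
\item you identify that limit through \eqref{eq : eq in u} tested with $\psi(u-g)$.
\end{itemize}
The last step needs $u(t)-g(t)\in H^1_0(\Omega)$ and the extension of the limit equation to test functions in $L^2(0,T;H^1_0(\Omega))$. That extension is legitimate, since $\partial_t u$ and $(\eta_\varepsilon+v^2)\nabla u$ belong to $L^2$.

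The cost is this extra dependence on the boundary data and on the already-derived $u$-equation. The gain is that $\psi$ depends only on time and no derivative ever falls on it. You therefore obtain strong convergence of $\nabla\overline u_\delta$ in $L^2((a,b)\times\Omega)$ for all $0<a<b$, up to the lateral boundary. This is stronger than the paper's local statement.

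Your fixed-time justification of the discrete test function is also cleaner than the paper's. The paper claims $\varphi(\overline u_\delta-u)$ lies in $H^1_{loc}((0,+\infty);L^2(\Omega))$, which is false because $\overline u_\delta$ is piecewise constant in time; the paper's argument does not actually need that claim.
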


\begin{proof}
In order to pass to the limit in \eqref{EL approché t fixe3} and especially in the second equation (via weak-strong convergence) \eqref{EL approché t fixe4}, we must prove the strong convergence of the term \(\nabla \overline{u}_\delta\) in $L^2(\Omega)$ in the equation \eqref{solution faible}. Our proof is inspired from the method of \cite{X94}. Namely we multiply the second equation of \eqref{EL approché t fixe3} by \( \overline{u}_\delta-u\) and obtain the strong convergence of \( \nabla (\overline{u}_\delta-u)\) in \(L^2_{\rm loc}((0,+\infty)\times \Omega )\). Here are the details. Let $\varphi \in \mathcal C^\infty_c((0,+\infty)\times \Omega)$ with $\varphi \geqslant 0$. According to \autoref{Maximum principle vrai} and 
\autoref{cor h1 bound}, the function $\varphi(\overline{u}_\delta-u)$ then belongs to $L_{loc}^\infty((0,+\infty)\times \Omega)\cap H_{loc}^1((0,+\infty) ; L^2(\Omega)) \cap L^2((0,+\infty); H_0^1 (\Omega))$. Applying it as a test function in \eqref{EL approché t fixe3} yields
\begin{multline*}
\int_0^{+\infty} \int_\Omega \partial_t \hat{u}_\delta(\overline{u}_\delta-u) \varphi+\int_0^{+\infty}\int_\Omega (\eta_\varepsilon+\overline{v}_\delta^2)\varphi \nabla \overline{u}_\delta\cdot \nabla (\overline{u}_\delta-u)\\
+\int_0^{+\infty}\int_\Omega (\eta_\varepsilon+\overline{v}_\delta^2)(\overline{u}_\delta-u) \nabla \overline{u}_\delta\cdot \nabla \varphi=0.
\end{multline*}
By weak-strong convergence, the first and third terms satisfy
$$\int_0^{+\infty} \int_\Omega \partial_t \hat{u}_\delta(\overline{u}_\delta-u) \varphi+\int_0^{+\infty}\int_\Omega (\eta_\varepsilon+\overline{v}_\delta^2)(\overline{u}_\delta-u) \nabla \overline{u}_\delta\cdot \nabla \varphi \to 0,$$
hence
\begin{equation}\label{existence1}
\int_0^{+\infty}\int_\Omega (\eta_\varepsilon+\overline{v}_\delta^2)\varphi \nabla \overline{u}_\delta\cdot \nabla (\overline{u}_\delta-u) \to 0.
\end{equation}
According to \autoref{extraction} (item (3) and (5)), $\eta_\varepsilon + \overline{v}_\delta ^2$ converges strongly to $\eta_\varepsilon + v^2$ in $L^2_{loc}((0,+\infty);L^2(\Omega))$  and $\nabla \overline{u}_\delta \rightharpoonup \nabla u$ weakly in $L_{loc}^2((0,+\infty);L^2(\Omega))$, which implies
\begin{equation}\label{cv b}
\int_0^{+\infty}\int_\Omega (\eta_\varepsilon + \overline{v}_\delta ^2) \nabla \overline{u}_\delta \cdot \nabla \varphi \longrightarrow \int_0^{+\infty}\int_\Omega (\eta_\varepsilon + v ^2) \nabla u \nabla \varphi.
\end{equation}
Combining \eqref{existence1} and \eqref{cv b}, we deduce
\begin{multline*}\lim_{\delta \to 0} \int_0^{+\infty}\int_\Omega (\eta_\varepsilon+\overline{v}_\delta^2)\varphi |\nabla \overline{u}_\delta|^2 =\lim_{\delta \to 0} \int_0^{+\infty}\int_\Omega (\eta_\varepsilon+\overline{v}_\delta^2)\varphi \nabla \overline{u}_\delta\cdot \nabla u\\
=\int_0^{+\infty}\int_\Omega (\eta_\varepsilon+v^2)\varphi |\nabla u|^2,
\end{multline*}
%where the last equality follows from the strong convergence $\overline{v}_\delta \rightarrow v$ in $L^2_{loc}((0,+\infty); L^2(\Omega))$ and the weak convergence $\nabla \overline{u}_\delta \rightharpoonup \nabla u$ in $L_{loc}^2((0,+\infty) ; L^2(\Omega))$. 
%Notice that since \autoref{extraction} (item (1)-(2))  , for any $t\in (0,T)$,  
%
%\[\displaystyle \int_\Omega (\eta_\varepsilon + \overline{v}_\delta^2(t)) \varphi \nabla \overline{u}_\delta (t) \cdot \nabla u (t) \rightarrow \int (\eta_\varepsilon + v^2) \nabla u(t) \cdot \nabla u(t).\]
%
%Using \eqref{energy bound continue} and the Lebesgues dominated convergence, we deduce the following from \eqref{existence1} : 
This implies that
\begin{align*}
\eta_\varepsilon \int_0^{+\infty} \int_\Omega  |\nabla (\overline{u}_\delta-u)|^2 \varphi &\leqslant \int_0^{+\infty} \int_\Omega  (\eta_\varepsilon+\overline{v}_\delta^2) |\nabla (\overline{u}_\delta-u)|^2 \varphi \\
& = \int_0^{+\infty} \int_\Omega (\eta_\varepsilon+\overline{v}_\delta^2)|\nabla \overline{u}_\delta|^2 \varphi- 2\int_0^{+\infty} \int_\Omega (\eta+\overline{v}_\delta^2)\varphi \nabla u\cdot \nabla \overline{u}_\delta\\
&\quad +\int_0^{+\infty} \int_\Omega (\eta_\varepsilon+\overline{v}_\delta^2)|\nabla u|^2 \varphi\longrightarrow 0,
\end{align*}
 hence
\begin{equation}\label{cv forte L1}
\lim_{\delta \to 0} \int_0^{+\infty} \int_\Omega |\nabla (\overline{u}_\delta-u)|^2\varphi \, dt\, dx=0.
\end{equation}
We deduce that $\nabla \overline{u}_\delta \to \nabla u$ strongly in $L^2_{\rm loc}((0,+\infty) \times \Omega)$.
 
 \medskip
 
We are now ready to pass to the limit in each equation. Let $\varphi \in \mathcal{C}_c^\infty ((0,+\infty) \times \Omega)$ be a test function. For any $\delta$, 

\[0 =\int_0^{+\infty} \int_\Omega \partial_t \hat{u}_\delta \varphi + \int_0^{+\infty}\int_\Omega (\eta_\varepsilon + \overline{v}_\delta ^2) \nabla \overline{u}_\delta \nabla \varphi.\]
Using \autoref{extraction} item (2), $\partial_t \hat{u}_\delta \rightharpoonup \partial_t u$ in $L_{loc}^2((0,+\infty);L^2(\Omega))$, hence,

\begin{equation}\label{cv a}
\int_0^{+\infty} \int_\Omega \partial_t \hat{u}_\delta \varphi \rightarrow \int_0^{+\infty} \int_\Omega \partial_t u \varphi
\end{equation}
%Using that $\eta_\varepsilon + \overline{v}_\delta ^2$ converges strongly to $\eta_\varepsilon + v^2$ in $L^2([0,T];L^2(\Omega))$  and that $\nabla \overline{u}_\delta \rightharpoonup \nabla u$ weakly in $L^2([0,T];L^2(\Omega))$, 
Combining \eqref{cv a} and \eqref{cv b}, we deduce that $u$ satisfies 

\begin{equation}\label{cv 1}
0 = \int_0^{+\infty} \int_\Omega \partial_t u \varphi + \int_0^{+\infty}\int_\Omega (\eta_\varepsilon + v ^2) \nabla u \nabla \varphi.
\end{equation}
Now consider \eqref{EL approché t fixe4}.

\[0 = \int_0^{+\infty} \int_\Omega \partial_t \hat{v}_\delta \varphi + \int_0^{+\infty} \int_\Omega \left[\varepsilon \nabla \overline{v}_\delta \cdot \nabla \varphi + \frac{\overline{v}_\delta -1}{4\varepsilon} \varphi + \overline{v}_\delta |\nabla \overline{u}_\delta |^2 \varphi\right].\]
Using that $ \partial_t \hat{v}_\delta \rightharpoonup \partial_t v$ in $L_{loc}^2((0,+\infty);L^2(\Omega))$, we have 

\begin{equation}\label{cv c}
\int_0^{+\infty} \int_\Omega \partial_t \hat{v}_\delta \varphi \rightarrow \int_0^{+\infty} \int_\Omega \partial_t v \varphi
\end{equation}
Using that  $\overline{v}_\delta \rightharpoonup  v$ in $L_{loc}^2((0,+\infty);H^1(\Omega))$ (\autoref{extraction} item (5)), 

\begin{equation}\label{cv d}
\int_0^{+\infty} \int_\Omega \varepsilon \nabla \overline{v}_\delta \cdot \nabla \varphi + \frac{\overline{v}_\delta -1}{4\varepsilon} \varphi \rightarrow \int_0^{+\infty} \int_\Omega \varepsilon \nabla v \cdot \nabla \varphi + \frac{v -1}{4\varepsilon} \varphi.
\end{equation}

Finally, since $\overline{v}_\delta \overset{\star}{\rightharpoonup} v$ in $L^\infty((0,+\infty); L^{\infty}(\Omega))$ (\autoref{extraction} item (6)) and  $|\nabla \overline{u}_\delta |^2$ converges to $|\nabla u|^2$ in $L_{loc}^1((0,+\infty)\times \Omega)$ (see \eqref{cv forte L1}), we deduce by Lebesgue dominated convergence that 

\begin{equation}\label{cv e}
\int_0^{+\infty} \int_\Omega  \overline{v}_\delta |\nabla \overline{u}_\delta |^2 \varphi \rightarrow \int_0^{+\infty} \int_\Omega v|\nabla u|^2 \varphi.
\end{equation}
Combining \eqref{cv c}, \eqref{cv d} and \eqref{cv e} yields \eqref{eq : eq in v}. 
\end{proof}

To conclude the proof of \autoref{Theorem 1}, we now prove the energy bound \eqref{energy bound thm 1}. 

\begin{proposition}
For almost every $t >0$, one has 
\begin{multline*}
AT_\varepsilon (u(t), v(t)) + \int_0^t \left(\|\partial_t u(s)\|_2^2 +\|\partial_t v(s)\|_2^2 \right) ds \leqslant AT(u_0, v_0)\\
 + \int_0^t \|\partial_t g(s)\|_2^2 ds + \int_0^t\int_\Omega (\eta_\varepsilon + v^2(s)) \left(|\partial_t \nabla g(s)|^2 + 2 \nabla u(s) \cdot \nabla \partial_t g(s)\right) dxds.
\end{multline*}
\end{proposition}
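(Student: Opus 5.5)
The plan is to pass to the limit $\delta\to 0$ in the discrete energy inequality \eqref{energy bound continue}, using lower semicontinuity on the left-hand side and strong convergence on the right-hand side.

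Fix $t>0$ outside the null set where \autoref{extraction} item (4) fails, and recall that $t_\delta\to t$.

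\textbf{Left-hand side.} By item (4), $\overline{u}_\delta(t)\rightharpoonup u(t)$ and $\overline{v}_\delta(t)\rightharpoonup v(t)$ weakly in $H^1(\Omega)$, hence strongly in $L^2(\Omega)$ by Rellich. Since $0\leqslant \overline{v}_\delta\leqslant 1$, we can pass to an a.e. convergent subsequence in $x$; then $\overline{v}_\delta(t)\nabla\overline{u}_\delta(t)\rightharpoonup v(t)\nabla u(t)$ weakly in $L^2(\Omega)$. To see this, test against $\psi\in L^2$: $\overline{v}_\delta\psi\to v\psi$ strongly in $L^2$ by dominated convergence. Weak lower semicontinuity of the $L^2$ norm then gives $\int(\eta_\varepsilon+v^2)|\nabla u|^2\leqslant\liminf\int(\eta_\varepsilon+\overline{v}_\delta^2)|\nabla \overline{u}_\delta|^2$ at time $t$. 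The terms $\varepsilon|\nabla v|^2$ and $(1-v)^2/\varepsilon$ are handled by lower semicontinuity and strong $L^2$ convergence respectively. So $AT_\varepsilon(u(t),v(t))\leqslant\liminf AT_\varepsilon(\overline{u}_\delta(t),\overline{v}_\delta(t))$. For the dissipation, fix $t'<t$, so that $t'<t_\delta$ for small $\delta$. Item (2) and weak lower semicontinuity in $L^2((0,t');L^2)$ give $\int_0^{t'}\|\partial_t u\|_2^2+\|\partial_t v\|_2^2\leqslant\liminf\int_0^{t_\delta}(\dots)$; then let $t'\uparrow t$ by monotone convergence.

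\textbf{Right-hand side.} $\int_0^{t_\delta}\|\partial_t\hat g_\delta\|_2^2\to\int_0^t\|\partial_t g\|_2^2$ by \autoref{prop : cv g} item (4), since $t_\delta\to t$ and the integrands are equi-integrable as they converge in $L^1_{loc}$. For $\int_0^{t_\delta}\!\int(\eta_\varepsilon+\overline v_\delta^2)|\partial_t\nabla\hat g_\delta|^2$, write it as the sum of $\int_0^{t_\delta}\!\int(\eta_\varepsilon+\overline v_\delta^2)(|\partial_t\nabla\hat g_\delta|^2-|\partial_t\nabla g|^2)$, which tends to $0$ since the weight is bounded by $1+\eta_\varepsilon$ and the bracket tends to $0$ in $L^1$, and $\int_0^{t_\delta}\!\int(\eta_\varepsilon+\overline v_\delta^2)|\partial_t\nabla g|^2$, which converges by dominated convergence using the a.e. convergence of item (3). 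The cross term $2\int_0^{t_\delta}\!\int(\eta_\varepsilon+\overline v_\delta^2)\nabla\overline u_\delta\cdot\nabla\partial_t\hat g_\delta$ is the delicate one, but here we can use the strong convergence $\nabla\overline u_\delta\to\nabla u$ in $L^2_{loc}$ obtained in the previous proof. Combined with the strong convergence of $\nabla\partial_t\hat g_\delta$ and the bounded a.e. convergence of the weight, the integrand converges in $L^1((0,t)\times\Omega)$.

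\textbf{Main obstacle.} The strong convergence $\nabla\overline u_\delta\to\nabla u$ was only established in $L^2_{loc}((0,+\infty)\times\Omega)$, i.e. against compactly supported cutoffs $\varphi$, whereas the cross term integrates over all of $\Omega$ up to the boundary. Two ways around this:
\begin{itemize}
\item Avoid strong convergence altogether. The factor $(\eta_\varepsilon+\overline v_\delta^2)\nabla\partial_t\hat g_\delta$ converges strongly in $L^2((0,T)\times\Omega)$: it is a bounded a.e. convergent weight times a strongly convergent sequence, handled by the same splitting trick as above. Meanwhile $\nabla\overline u_\delta\rightharpoonup\nabla u$ weakly in $L^2((0,T)\times\Omega)$ by item (5) and the uniform bound \eqref{bound overline+hat}. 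A weak–strong pairing then suffices, and the indicator of $(0,t_\delta)$ is handled using the uniform $L^2$ bounds on a vanishing time interval.
\item Alternatively, one could try to upgrade the convergence to the whole domain, but the first route is simpler and is the one I would use.
\end{itemize}
Combining the limits of both sides gives the inequality for almost every $t$.
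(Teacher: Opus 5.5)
Your proposal is correct and follows essentially the same route as the paper: lower semicontinuity of $AT_\varepsilon$ at a good time $t$ (via item (4) and strong $L^2$ convergence of $\overline v_\delta(t)$) and of the dissipation (via weak $L^2$ convergence of the time derivatives), strong convergence of the $g$-terms, and a weak--strong pairing of $\nabla\overline u_\delta \rightharpoonup \nabla u$ against the strongly convergent factor $(\eta_\varepsilon+\overline v_\delta^2)\nabla\partial_t\hat g_\delta$ for the cross term, which is exactly how the paper argues. Your write-up is in fact somewhat more careful than the paper's. You use the $t'\uparrow t$ device, you explicitly invoke the $L^\infty$ bound on $\overline v_\delta$ to get strong $L^2$ convergence of the weighted factor, you control the slab $(t_\delta,t)$ through the $L^\infty_tL^2_x$ bound on $\nabla\overline u_\delta$, and you rightly set aside the strong convergence of $\nabla\overline u_\delta$, which holds only locally in space.
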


\begin{proof}
Recall that $(\overline{u}_\delta, \overline{v}_\delta, \overline{g}_\delta)$ satisfy \eqref{energy bound continue} 
\begin{multline*}
AT_\varepsilon (\overline{u}_\delta (t) , \overline{v}_\delta (t)) + \int_0^{t_\delta} \left(\|\partial_t \hat{u}_\delta(s)\|_2^2 + \|\partial_t \hat{v}_\delta(s)\|_2^2 \right)ds 
\leqslant AT_\varepsilon (u_0 , v_0) + \int_0^{t_\delta} \|\partial_t \hat{g}_\delta (s)\|_2^2 ds\\
+ \int_0^{t_\delta}\int_\Omega (\eta_\varepsilon + \overline{v}_\delta(s)^2)\left(|\partial_t \nabla \hat{g}_\delta (s)|^2ds + 2 \nabla \overline{u}_\delta (s)\cdot \nabla (\partial_t \hat{g}_\delta (s)\right) dx ds.
\end{multline*}
We pass to the limit in each term. 

First, since for almost every $t> 0$, $\overline{v}_\delta(t) \rightharpoonup v(t)$ weakly in $H^1(\Omega)$ and $\overline{v}_\delta(t) \rightarrow v(t)$ strongly in $L^2(\Omega)$ (according to \autoref{extraction} items (3) and (4)), we obtain that 

\begin{equation}\label{bound continue 1}
\int_\Omega \varepsilon |\nabla v(t)|^2 + \frac{(1-v(t))^2}{4\varepsilon} \leqslant \liminf\limits_{\delta \rightarrow 0}\int_\Omega \varepsilon |\nabla \overline{v}_\delta(t)|^2 + \frac{(1-\overline{v}_\delta)^2}{4\varepsilon}.
\end{equation}
Since, for almost every $t >0$, $\overline{u}_\delta(t) \rightharpoonup u(t)$ weakly in $H^1(\Omega)$ and $\overline{v}(t) \rightarrow v(t)$ strongly in $L^2(\Omega)$ (according again to \autoref{extraction} items (3) and (4)), 

\begin{equation}\label{bound continue 2}
\int_\Omega(\eta_\varepsilon + v(t)^2) |\nabla u(t)|^2 \leqslant \liminf\limits_{\delta \rightarrow 0}\int_\Omega(\eta_\varepsilon + \overline{v}_\delta(t)^2) |\nabla \overline{u}_\delta(t)|^2.
\end{equation}
Secondly, using that $(\partial_t \hat{u}_\delta , \partial_t \hat{v}_\delta) \rightharpoonup (\partial_t v, \partial_t u)$ weakly in $[L^2((0,t) \times \Omega)]^2$ (according to \autoref{extraction} item (2)) and that $1_{(0,t_\delta)} \rightarrow 1_{(0,t)}$ strongly in $L^2((0,T); L^2(\Omega))$, 

\begin{multline}\label{bound continue 3}
\int_0^{t} \left(\|\partial_t u(s)\|_{2}^2 + \|\partial_t v(s)\|_{2}^2\right) ds \\
\leqslant \liminf\limits_{\delta\rightarrow 0} \int_0^{t} \left(\|\partial_t \hat{u}_\delta(s)\|_{2}^2 + \|\partial_t \hat{v}_\delta(s)\|_{2}^2\right)1_{(0,t_\delta)}(s) \, ds. 
\end{multline}
Combining \eqref{bound continue 1}, \eqref{bound continue 2} and \eqref{bound continue 3}, we deduce that for almost every $t>0$,

\begin{multline}\label{bound continue 4}
AT_\varepsilon (u(t), v(t)) + \int_0^t \left(\|\partial_t u(s)\|_{2}^2 +\|\partial_t v(s)\|_{2}^2 \right)ds\\
\leqslant \liminf\limits_{\delta \rightarrow 0} AT_\varepsilon (\overline{u}_\delta(t), \overline{v}_\delta(t)) + \int_0^{t_\delta} \left(\|\partial_t \hat{u}_\delta(s) \|_{2}^2 + \|\partial_t \hat{v}_\delta (s) \|_{2}^2\right)ds.
\end{multline}
On the other hand, using that $\partial_t\hat{g}_\delta\rightarrow \partial_t g$ strongly in $L^2((0,t)\times \Omega)$ (according to \autoref{prop : cv g} item (3)) and that $1_{(0,t_\delta)} \rightarrow 1_{(0,t)}$ strongly in $L^2((0,t); L^2(\Omega))$, we deduce that 

\begin{equation}\label{bound continue 5}
\lim\limits_{\delta \rightarrow 0} \int_0^{t_\delta} \|\partial_t \hat{g}_\delta(s)\|_2^2 ds = \int_0^t \|\partial_t g(s)\|_2^2 ds. 
\end{equation}
Using that $\partial_t\nabla \hat{g}\rightarrow \partial_t \nabla g$ strongly in $L^2((0,t);L^2(\Omega))$, that $\overline{v}_\delta \rightarrow v$ strongly in $L^2((0,t)\times \Omega)$ and that $\nabla \overline{u} \rightharpoonup \nabla u$ weakly in $L^2((0,t)\times \Omega)$, %that $1_{(0,t_\delta)} \rightarrow 1_{(0,t)}$ strongly in $L_t^2 L_x^2$, we deduce that 

\begin{equation}\label{bound continue 6}
\lim\limits_{\delta \rightarrow 0} \int_0^{t_\delta} \int_\Omega(\eta_\varepsilon +\overline{v}_\delta^2(s)) |\partial_t \nabla \hat{g}_\delta(s)|^2 \, dx ds = \int_0^{t_\delta}\int_\Omega (\eta_\varepsilon + v^2(s))|\partial_t \nabla g(s)|^2 dx ds,
\end{equation}

\begin{equation}\label{bound continue 7}
\lim\limits_{\delta \rightarrow 0} \int_0^{t_\delta}\int_\Omega 2(\eta_\varepsilon +\overline{v}_\delta^2(s)) \nabla \overline{u}_\delta(s)\cdot \nabla \partial_t \hat{g}_\delta(s) \, dxds= \int_0^{t_\delta} \int_\omega 2(\eta_\varepsilon + v^2(s))\nabla u(s) \cdot \nabla \partial_t g(s) \, dxds. 
\end{equation}
Combining \eqref{bound continue 4}, \eqref{bound continue 5}, \eqref{bound continue 6}, \eqref{bound continue 7}, we deduce the desired property. 
\end{proof}

\textbf{Remark :} In the case where $(u,v)\in \mathcal{C}_{par}^{1+\alpha/2,2+\alpha}([0,\infty)\times \overline{\Omega})$ for $\alpha \in (0,1)$, the pair $(\partial_t u -\partial_t g, \partial_t v)$ is in $\mathcal{F}$. Using $\partial_t u-\partial_t g$ as a test function in \eqref{eq : eq in u} and $\partial_t v$ in \eqref{eq : eq in v}, we can obtain an equality in \eqref{energy bound thm 1}. 

%To conclude the proof of \autoref{Theorem 1}, we state the maximum principle for the pair $(u,v)$ :

\section{Uniqueness}
\label{Section unicité}

In this section, we prove \autoref{Theorem 3}. This is merely an adaptation of the proof of \cite[Theorem 2.3, Step 3]{FP04}. 

Recall the Gagliardo-Nirenberg inequality (see \cite{N59}) :

\begin{lemma}\label{lem : GN}
Suppose that the boundary $\partial\Omega$ is $\mathcal{C}^2$. 
Let $p, r,s$ and $\alpha\in (0,1)$ be positive numbers satisfying the relation 

\[\frac{1}{p} = \alpha \left(\frac{1}{r} -\frac{1}{d}\right) + \frac{1-\alpha}{s}.\]
There exists a constant $C_G$ (depending on $p, r, s$ and $\alpha$) such that for any $f \in L^s(\Omega)$ such that $\nabla f \in L^r(\Omega)$,
\[\|f\|_{p} \leqslant C_G \left(\|\nabla f\|_r^{\alpha} \|f\|_s^{1-\alpha} + \|f\|_s\right).\]
\end{lemma}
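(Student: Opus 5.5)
This is the classical Gagliardo--Nirenberg interpolation inequality on a bounded $\mathcal{C}^2$ (indeed Lipschitz suffices) domain. The plan is to reduce to the whole-space case by extension, and to prove the whole-space case by Sobolev embedding combined with Hölder interpolation between Lebesgue norms.

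\textbf{Reduction to $\mathbb{R}^d$.} Since $\partial\Omega$ is $\mathcal{C}^2$ and $\Omega$ is bounded, there is a bounded linear extension operator $E$, built by local flattening of the boundary, reflection and a partition of unity. It maps $L^s(\Omega)$ into $L^s(\mathbb{R}^d)$, maps $W^{1,r}(\Omega)$ into $W^{1,r}(\mathbb{R}^d)$, and produces functions supported in a fixed compact neighbourhood $K$ of $\overline\Omega$. It satisfies
\[\|Ef\|_{L^s(\mathbb{R}^d)}\leqslant C\|f\|_s, \qquad \|\nabla Ef\|_{L^r(\mathbb{R}^d)}\leqslant C(\|\nabla f\|_r+\|f\|_r).\]
The lower-order term $\|f\|_r$ is the source of the additive $\|f\|_s$ in the statement. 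When $r\leqslant s$ it is bounded by $C\|f\|_s$ because $\Omega$ is bounded. When $r>s$, I will first interpolate $\|f\|_r$ between $\|f\|_s$ and $\|f\|_{r^*}$ (or use a Poincaré-type estimate) and then absorb the result using Young's inequality. The same absorption handles the lower-order contribution in the main estimate below.

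\textbf{Whole-space estimate.} For $F=Ef$ and $r<d$, the Gagliardo--Nirenberg--Sobolev inequality gives $\|F\|_{r^*}\leqslant C\|\nabla F\|_r$ with $\frac1{r^*}=\frac1r-\frac1d$. The relation on the exponents says exactly that $\frac1p=\frac{\alpha}{r^*}+\frac{1-\alpha}{s}$. Hölder's interpolation inequality then gives
\[\|F\|_p\leqslant \|F\|_{r^*}^{\alpha}\|F\|_s^{1-\alpha}\leqslant C\|\nabla F\|_r^\alpha\|F\|_s^{1-\alpha}.\]
Restricting to $\Omega$ and inserting the extension bounds, I expand $(\|\nabla f\|_r+\|f\|_s)^\alpha\|f\|_s^{1-\alpha}\leqslant \|\nabla f\|_r^\alpha\|f\|_s^{1-\alpha}+\|f\|_s$, which is the claimed inequality.

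\textbf{Main obstacle.} The hard step is the range where the Sobolev exponent is not finite: $r\geqslant d$, or $1/r-1/d\leqslant 0$. In that case $p$ can be large and the simple Hölder interpolation is no longer available. For $r>d$ I would use Morrey's inequality $\|F\|_\infty\leqslant C\|\nabla F\|_r^{d/r}\|F\|_s^{1-d/r}$ on compactly supported functions. This is derived by scaling, from the pointwise potential estimate $|F(x)-\bar F_{B_\rho}|\leqslant C\rho^{1-d/r}\|\nabla F\|_r$, after which $\rho$ is optimized. I would then interpolate between $L^s$ and $L^\infty$. The endpoint case $r=d$ would be handled by replacing $r$ with a slightly smaller exponent on the bounded set $K$, accepting a constant that depends on the exponents.

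In the application that follows (the uniqueness proof with $\nabla u\in L^{d+2}$), only the case with finite $r^*$ and $\alpha\in(0,1)$ is needed. So I would state the endpoint cases only briefly and refer to \cite{N59}.
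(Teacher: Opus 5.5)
The paper gives no proof of this lemma; it just cites Nirenberg \cite{N59}. Your extension, Sobolev and H\"older-interpolation scheme is a sound self-contained argument when $1\leqslant r<d$. The Morrey route is also sound for $r>d$, with two small corrections. First, optimizing $\rho$ in $|F(x)|\leqslant C\rho^{-d/s}\|F\|_s+C\rho^{1-d/r}\|\nabla F\|_r$ gives the exponent $\theta=\frac{1/s}{1/s+1/d-1/r}$ on $\|\nabla F\|_r$, not $d/r$; the two agree only when $s=r$. Second, the lower-order term coming from the extension is handled most cleanly by Poincar\'e--Wirtinger, $\|f\|_r\leqslant C\|\nabla f\|_r+C\|f\|_1$. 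This needs neither $r^*<\infty$ nor an a priori bound on $\|f\|_r$ to justify an absorption.

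The genuine gap is the endpoint $r=d$, and it is exactly the case the paper needs.

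\emph{Why your fix fails.} Lowering $r$ to $\tilde r<d$ on the bounded set $K$ gives $\|F\|_p\leqslant C\|\nabla F\|_d^{\tilde\alpha}\|F\|_s^{1-\tilde\alpha}$ with $\tilde\alpha=\frac{\alpha/s}{1/s+1/d-1/\tilde r}>\alpha$. The problem is the exponent, not the constant. The case $r=d$ is scale-critical: writing $X=\|\nabla f\|_d$ and $Y=\|f\|_s$, the ratio $X^{\tilde\alpha}Y^{1-\tilde\alpha}/(X^{\alpha}Y^{1-\alpha}+Y)$ tends to $\infty$ as $X/Y\to\infty$. So the inequality you obtain does not imply the stated one, whatever the constant or lower-order term.

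\emph{Why the case cannot be dismissed.} The paper applies the lemma with $r=s=2$, $p=2(d+2)/d$ and $\alpha=d/(d+2)$. For $d=2$ this is precisely $r=d$: it is Ladyzhenskaya's inequality $\|f\|_4\leqslant C\|\nabla f\|_2^{1/2}\|f\|_2^{1/2}+C\|f\|_2$. For $d=1$ it is $r>d$. So your claim that only the finite-$r^*$ case is used is false.

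\emph{Why the exact exponent matters.} In the uniqueness proof, Young's inequality turns $\|\nabla u_2\|_{d+2}^2\|\nabla v\|_2^{2\alpha}\|v\|_2^{2(1-\alpha)}$ into $\delta\|\nabla v\|_2^2+C\|\nabla u_2\|_{d+2}^{2/(1-\alpha)}\|v\|_2^2$. One has $2/(1-\alpha)=d+2$ only for $\alpha=d/(d+2)$. Any $\tilde\alpha>\alpha$ would require more time integrability of $\|\nabla u_i\|_{d+2}$ than the hypothesis provides.

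\emph{The missing idea.} Use Nirenberg's device: apply the $W^{1,1}$ Sobolev inequality $\|G\|_{d/(d-1)}\leqslant C\|\nabla G\|_1$ to a power $G=|F|^\gamma$, then use H\"older with $\nabla F\in L^d$, and interpolate or absorb as needed. In the case the paper needs ($d=2$, $p=4$, $s=2$) this is one line:
$\|F\|_4^2=\|F^2\|_2\leqslant C\|\nabla (F^2)\|_1\leqslant 2C\|F\|_2\|\nabla F\|_2$.
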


\begin{proof}[Proof of \autoref{Theorem 3}]
Let $(u_1, v_1)$ and $(u_2, v_2)$ be two solutions of \eqref{système de départ} in the sense of \autoref{solution faible} and set $u = u_1 -u_2$ and $v=v_1 - v_2$. 
Substracting the equations satisfied by $u_1$ and $u_2$ and taking $u1_[0,t]$ as a test function leads to 

\begin{equation}\label{uniqueness 1}
\int_0^t \int_\Omega \partial_t u u + \int_0^t \int_\Omega \nabla u\cdot [(\eta_\varepsilon + v_1^2) \nabla u_1 - (\eta_\varepsilon + v_2^2) \nabla u_2) = 0 \quad \text{for all t >0}.
\end{equation}
Differentiating \eqref{uniqueness 1} with respect to $t$ leads to

\begin{align}\label{uniqueness 2}
\frac{1}{2}\frac{d}{dt} \|u\|_2^2 + \eta_\varepsilon  \|\nabla u(t)\|_2^2 &= \int_{\Omega} (v_2^2(t)\nabla u_2(t) - v_1^2(t) \nabla u_1(t))\cdot \nabla u (t) \nonumber\\
&= \int_\Omega (v_2(t)^2 \nabla u_2(t) - v_1(t)^2 (\nabla u(t) + \nabla u_2(t)))\cdot \nabla u(t) \nonumber\\
&= -\int_\Omega v_1(t)^2 |\nabla u(t)|^2 - \int_\Omega v(t) (v_1(t) +v_2(t))\nabla u_2(t)\cdot \nabla u(t) \nonumber\\
&\leqslant - \int_\Omega v(t) (v_1(t) +v_2(t))\nabla u_2(t)\cdot \nabla u(t).
\end{align}
%which rewrites as 
%
%\begin{equation}\label{uniqueness 2}
%\frac{1}{2}\frac{d}{dt} \|u\|_2^2 + \int_\Omega (\eta_\varepsilon + v_1^2) |\nabla u|^2 = \int_\Omega  v(v_1+v_2) \nabla u_2 \nabla u.
%\end{equation}
We proceed similarly for $v$ : substracting the equations satisfied by $v_1$ and $v_2$ and taking $v 1_[0,t]$ as a test function leads to, after differentiating with respect to $t$,  

\begin{align}\label{uniqueness 3}
&\frac{1}{2}\frac{d}{dt} \|v(t)\|_2^2 + \varepsilon \|\nabla v(t)\|_2^2 + \frac{1}{4\varepsilon} \|v(t)\|_2^2\\
 &= \int_\Omega (v_2(t)|\nabla u_2(t)|^2 - v_1(t)|\nabla u_1(t)|^2) v(t) \nonumber\\
 &= \int_\Omega (v_2(t)|\nabla u_2(t)|^2 - (v(t)+v_2(t))|\nabla u_1(t)|^2) v(t)\nonumber \\
 &= -\int_\Omega v(t)^2 |\nabla u_1(t)|^2 - \int_\Omega v_2(t) v(t) \nabla u(t)\cdot (\nabla u_2(t) + \nabla u_1(t)) \nonumber\\
 & \leqslant - \int_\Omega v_2(t) v(t) \nabla u(t)\cdot (\nabla u_2(t) + \nabla u_1(t)).
\end{align}
%which rewrites as 
%
%\begin{equation}\label{uniqueness 3}
%\frac{1}{2}\frac{d}{dt} \|v(t)\|_2^2 + \varepsilon \|\nabla v(t)\|_2^2 + \frac{1}{4\varepsilon} \|v(t)\|_2^2 + \int_\Omega v^2 |\nabla u_1|^2 =\int_\Omega v_2 v \nabla u\cdot (\nabla u_2 + \nabla u_1). 
%\end{equation}
Summing \eqref{uniqueness 2} and \eqref{uniqueness 3}, we obtain 

\begin{multline}\label{uniqueness 4}
\frac{1}{2} \frac{d}{dt} \left(\|u(t)\|_2^2+ \|v(t)\|_2^2\right) +\varepsilon\|\nabla v(t)\|_2^2 +\eta_\varepsilon \|\nabla u(t)\|^2 \\
\leqslant - \int_\Omega  v(t)(v_1(t)+2v_2(t)) \nabla u_2(t) \nabla u(t) - \int_\Omega v(t) v_2(t) \nabla u(t) \cdot \nabla u_1(t). 
\end{multline}

When we apply \autoref{lem : GN} to $v(t)$, with $p = 2 \dfrac{d+2}{d}$, $r=s=2$ and $\alpha = \dfrac{d}{d+2}$, we find
\[\|v(t)\|_{2\frac{d+2}{d}} \leqslant C_G \|\nabla v(t)\|_2^{\frac{d}{d+2}} \|v(t)\|_2^{\frac{2}{d+2}} + C_G \|v(t)\|_2.\] 
Since $0\leqslant v_1, v_2\leqslant 1$, using twice Young's inequality for a given $\delta >0$ to be chosen later and Hölder inequality for the pair $(1+d/2,1+2/d)$, there exist $C_\delta, C_\delta ' >0$ such that  

\begin{align*}
&\left|\int_\Omega (v_1(t) +2v_2(t)) v(t) \nabla u(t) \cdot \nabla u_2(t)\right| \\
&\leqslant 3 \left(C_\delta \int_\Omega v(t)^2 |\nabla u_2(t)|^2 + \delta \int_\Omega |\nabla u(t)|^2\right) \\
& \leqslant 3C_\delta \|\nabla u_2(t)\|_{d+2}^2 \|v(t)\|_{\frac{2(d+2)}{d}}^2 + 3\delta\int_\Omega |\nabla u(t)|^2\\
&\leqslant 3C_\delta \|\nabla u_2(t)\|_{d+2}^2 C_G^2 \left( \|\nabla v(t)\|_2^{\frac{d}{d+2}} \|v(t)\|_2^{\frac{2}{d+2}} + \|v(t)\|_2\right)^2 + 3\delta\int_\Omega |\nabla u(t)|^2\\
&\leqslant 6C_\delta C_G^2 \left(\|\nabla u_2(t)\|_{d+2}^2 \|\nabla v(t)\|_2^{\frac{2d}{d+2}} \|v(t)\|_2^{\frac{4}{d+2}} + \|\nabla u_2(t)\|_{d+2}^2 \|v(t)\|_2^2 \right)+ 3\delta\int_\Omega |\nabla u(t)|^2\\
&\leqslant 6C_\delta C_G^2 \left(C'_\delta \|\nabla u_2(t)\|_{d+2}^{2\frac{d+2}{2}} \|v(t)\|_2^2  + \delta \|\nabla v(t)\|_2^2 \right)\\
&\quad + 6C_\delta C_G^2 \|\nabla u_2(t)\|_{d+2}^2 \|v(t)\|_2^2 + 3\delta\int_\Omega |\nabla u(t)|^2.
\end{align*}
Similarly, we get that 

\begin{multline*}
\left|\int_\Omega v(t) v_2(t) \nabla u(t) \cdot \nabla u_1(t) \right| \\
\leqslant C_{G,\delta} \left(\|\nabla u_1 (t)\|_{d+2}^{2\frac{d+2}{2}} \|v(t)\|_2^2 + \|\nabla u_1(t)\|_{d+2}^2\|v(t)\|_2^2 + \delta \|\nabla v(t)\|_2^2 \right) +\delta \int_{\Omega} |\nabla u(t)|^2.
\end{multline*}
For $\delta$ small enough, we can simplify \eqref{uniqueness 4} into 

\begin{multline}\label{G9}
\frac{d}{dt}\left(\|u(t)\|_2^2 +\|v(t)\|_2^2\right)\\
\leqslant C \left(\|\nabla u_2(t)\|_{d+2}^{d+2} +\|\nabla u_1(t)\|_{d+2}^{d+2} + \|\nabla u_2(t)\|_{d+2}^2 +\|\nabla u_1(t)\|_{d+2}^2\right)\|v(t)\|_2^2. 
\end{multline}
Since $\nabla u_2, \nabla u_1 \in L^{d+2}((0,T); L^{d+2}(\Omega))$ for any $T >0$, and since $x^2 \leqslant 1 + x^{d+2}$ for any positive real number $x$, we have that for any $T >0$, 

\begin{multline*}
\int_0^T \left(\|\nabla u_2(t)\|_{d+2}^{d+2} + \|\nabla u_2(t)\|_{d+2}^2\right)dt\\
 \leqslant \int_0^T \|\nabla u_2(t)\|_{d+2}^{d+2}dt + \int_0^T \left( 1+ \|\nabla u_2(t)\|_{d+2}^{d+2} \right)dt <+\infty.
\end{multline*}

\begin{multline*}
\int_0^T \left(\|\nabla u_1(t)\|_{d+2}^{d+2} + \|\nabla u_1(t)\|_{d+2}^2\right)dt\\
 \leqslant \int_0^T \|\nabla u_1(t)\|_{d+2}^{d+2}dt + \int_0^T \left( 1+ \|\nabla u_1(t)\|_{d+2}^{d+2} \right)dt <+\infty.
\end{multline*}
We are then in position to apply Gronwall's lemma to \eqref{G9} to conclude that $\|u(t)\|_2^2 +\|v(t)\|_2^2 =0$ for any $t$, leading to $u=v=0$. This concludes the proof. 
\end{proof}

\section{Regularity of solutions to the Ambrosio-Tortorelli equations}
\label{section régularité}

We prove \autoref{Theorem 2}, i.e., that the weak solutions constructed in \autoref{Section existence faible} are in fact smooth. 

We recall the outline of the proof of the inner regularity. The main step is to prove that $(u,v)\in \mathcal{C}^{\alpha/2, \alpha}(Q_R(t_0,x_0))$ for a real number $\alpha$ and a parabolic cylinder $Q_R(t_0,x_0) \subset (0,+\infty)\times \Omega$. For the function $u$, this is a consequence of DeGiorgi--Nash--Moser theorem.

For the function $v$, as in the elliptic framework, the parabolic Hölder spaces can be linked with the Morrey-Campanato spaces (see \cite[Section 5.1]{GM12}, and the Hölder condition can be rewritten as a control of the distance of the function to its average, via the following lemma, which is stated in \cite[Lemma 4.3]{L96}   
\begin{lemma}\label{lem : Lieberman}
Let $f\in L^1(Q_{2R}(t_0,x_0))$. Suppose that there exist $\alpha \in (0,1)$ and a constant $H >0$ such that, for any $Y\in Q_{R}(t_0,x_0)$ and any $r\in (0,R)$, 

\[\iint_{Q_r (Y)} |f(X) - \{f\}_{t,Y}|dX \leqslant H r^{d+2+\alpha}.\] 
Then there is a constant $C(d,\alpha)$ such that 

\[\sup_{Q_R(t_0,x_0)} \frac{|f(X) - f(X')|}{d_p (X, X')^{\alpha}} \leqslant C(d, \alpha) H.\]
\end{lemma}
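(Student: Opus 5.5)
The statement is the parabolic Campanato lemma, so the plan is the classical Campanato argument adapted to cylinders and the parabolic distance $d_p$. (I read $\{f\}_{t,Y}$ as $\{f\}_{r,Y}$, the mean over $Q_r(Y)$.) First we compare averages on nested cylinders. For $Y\in Q_R(t_0,x_0)$ and $0<\rho<r<R$ we have $Q_\rho(Y)\subset Q_r(Y)$. Averaging the inequality $|\{f\}_{\rho,Y}-\{f\}_{r,Y}|\le |f(X)-\{f\}_{\rho,Y}|+|f(X)-\{f\}_{r,Y}|$ over $X\in Q_\rho(Y)$ gives
\[
|\{f\}_{\rho,Y}-\{f\}_{r,Y}|\le \frac{H\rho^{d+2+\alpha}+Hr^{d+2+\alpha}}{\omega_d\rho^{d+2}}.
\]
Choosing $\rho=r/2$, this is $\le C(d)Hr^{\alpha}$. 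Summing the geometric series over $r_k=2^{-k}r$ shows that $(\{f\}_{r_k,Y})_k$ is Cauchy. Its limit agrees with $f(Y)$ at every Lebesgue point, since parabolic cylinders form a differentiation basis: they are doubling with respect to $d_p$ and Lebesgue measure. We therefore obtain
\[
|f(Y)-\{f\}_{r,Y}|\le C(d,\alpha)Hr^\alpha \qquad \text{for a.e. } Y\in Q_R(t_0,x_0),\ r<R,
\]
with $C(d,\alpha)=C(d)\sum_k 2^{-k\alpha}$. In particular the averages $\{f\}_{r,Y}$ converge uniformly, so $f$ has a continuous representative, which is the one we work with.

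Next, fix $X,X'\in Q_R(t_0,x_0)$ and set $r=d_p(X,X')$. If $r$ is comparable to $R$ or larger, we handle the pair either by passing through the center or by a chaining argument along intermediate points. We expect only this bookkeeping to produce an extra dependence, and in the standard statement it is absorbed either by allowing the constant to depend on the ratio of cylinders or by using a cylinder of radius $2R$. In the main case $r<R/4$, we may assume $t'\le t$ by symmetry. Since $|x-x'|\le r$ and $t-t'\le r^2$, the cylinder $Q_{2r}(X)$ contains $Q_r(X')$. The mean-comparison argument applied to these two cylinders gives
\[
|\{f\}_{2r,X}-\{f\}_{r,X'}|\le C(d)H r^\alpha.
\]
Combining this with the pointwise estimate at both points through the triangle inequality yields $|f(X)-f(X')|\le C(d,\alpha)H\,d_p(X,X')^\alpha$.

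The main obstacle is geometric: ensuring that every cylinder used stays inside the region where the hypothesis holds, namely centers in $Q_R$ and radii below $R$. The hypothesis on $Q_{2R}$ is what permits the cylinders $Q_{2r}(X)$ to be used. The point to check carefully is that the backward-in-time cylinders in Lemma~\ref{lem : Lieberman} force us to order the two points in time before choosing which one serves as the center of the larger cylinder. Once that ordering is fixed, the inclusion $Q_r(X')\subset Q_{2r}(X)$ follows directly from the definition of $Q_R$.
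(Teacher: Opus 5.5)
The paper gives no proof of this lemma; it quotes it from \cite[Lemma 4.3]{L96}. Your argument is the standard Campanato proof of that result, and its small-scale part is correct.
\begin{itemize}
\item The dyadic comparison of means gives $|f(Y)-\{f\}_{r,Y}|\le C(d,\alpha)Hr^\alpha$ for every $Y\in Q_R(t_0,x_0)$ and every $r<R$, for the continuous representative.
\item The inclusion $Q_r(X')\subset Q_{2r}(X)$ holds when $t'\le t$ and $r=d_p(X,X')$.
\item For $r<R/4$, every cylinder you use has its centre in $Q_R(t_0,x_0)$ and radius below $R$, which is exactly what the hypothesis allows. The role of $f\in L^1(Q_{2R})$ is only to make those cylinders lie in the domain of $f$.
\end{itemize}
To identify the limit of the means with $f$ a.e., the precise justification is that $Q_r(Y)$ lies in the $d_p$-ball of radius $2r$ about $Y$ and has comparable measure. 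Alternatively, $Y\mapsto\{f\}_{r,Y}$ is $f$ convolved with an approximate identity, so it converges to $f$ in $L^1_{loc}$.

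The one real gap is the case $d_p(X,X')\ge R/4$. You leave it as bookkeeping and suggest it may force an extra dependence of the constant or a larger cylinder. As written, that does not prove the lemma as stated. Passing through the centre does not help, since a point of $Q_R(t_0,x_0)$ can be at $d_p$-distance almost $2R$ from $(t_0,x_0)$.

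A chaining with a fixed number of steps closes the case at no cost. Let $C_0=C_0(d,\alpha)$ denote the constant in your estimate $|f(X)-f(X')|\le C_0H\,d_p(X,X')^\alpha$, valid for $d_p(X,X')<R/4$. The set $Q_R(t_0,x_0)$ is convex, and $d_p(X,X')<3R$ because $|t-t'|<R^2$ and $|x-x'|<2R$. Hence the points $X_i=X+\frac{i}{N}(X'-X)$, $0\le i\le N$, lie in $Q_R(t_0,x_0)$ and satisfy
\[
d_p(X_i,X_{i+1})=\frac{|t-t'|^{1/2}}{\sqrt{N}}+\frac{|x-x'|}{N}\le\frac{d_p(X,X')}{\sqrt{N}}<\frac{3R}{\sqrt{N}}.
\]
With $N=144$, each step is shorter than $R/4$, so your small-scale estimate applies to each consecutive pair. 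Summing over the steps and using $d_p(X,X')\ge R/4$ gives
\[
|f(X)-f(X')|\le 144\,C_0H\Bigl(\frac{R}{4}\Bigr)^{\alpha}\le 144\,C_0H\,d_p(X,X')^{\alpha}.
\]
So the constant depends only on $d$ and $\alpha$, and no enlargement of the cylinder is needed.
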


The proof of this estimate of \eqref{lem : Lieberman} for the function $v$ follows the following steps : 

\begin{itemize}
\item \textbf{Step 1 :} We prove that $\nabla u \in \mathcal{L}^{2,d+2\alpha}(Q_R(t_0,x_0))$. In the elliptic case, this is a consequence of DeGiorgi--Nash--Moser theorem. In the parabolic case, this is the object of \autoref{prop:grad_spat_Morrey}.

\item \textbf{Step 2 :} We introduce the ``heat''-extension $w$ of $v$ on a smaller cylinder $Q_r(t_0,x_0)$, which corresponds to the solution of the heat equation $\partial_t w - \varepsilon \Delta w =0$ on $Q_r(t_0,x_0)$ with $w=v$ on $\partial_\sqcup Q_r(t_0,x_0)$. The function $w$ satisfies 

\[\iint_{Q_\rho(t_0,x_0)} |w -\{w\}_{\rho,t_0,x_0}|^2 \leqslant C \left(\frac{\rho}{r}\right)^{d+4} \iint_{Q_r(t_0,x_0)} |w -\{w\}_{r,t_1,x_1}|^2\]
for any $\rho\leqslant r$.

\item \textbf{Step 3 :} The error function $v_2 =v-w$ is the solution of a heat equation with a source term in a Morrey space. As $v_2\in L^2((0,+\infty);H_0^1(\Omega))$ we can use it as a test function to estimate $\iint_{Q_r(t_0,x_0)} |\nabla v_2|^2$. Finally we can control $\|v_2 -\{v_2\}_{\rho, t_0,x_0}\|_{2, Q_\rho(t_0,x_0)}$ by $\|\nabla v_2\|_{2,Q_\rho(t_0,x_0)}$ using a Poincare-type inequality. The proof of the Poincare inequality for solutions of heat equation with a source term in a Morrey space is the object of \autoref{prop : Poincare}.  
\end{itemize} 

\smallskip

To address the regularity at the boundary $\partial_\sqcup [(0,+\infty)\times \Omega]$, we need to consider the bottom boundary $\{0\}\times \Omega$ and the lateral boundary $(0,+\infty) \times \partial \Omega$. In each case, the strategy is to extend the pair $(u,v)$ beyond the boundary and then prove the inner regularity on the extended domain. The resulting equations for the extended pairs contain non constant coefficients and additional source terms. The strategy of the proof remains the same as the strategy below for the inner regularity, although the equations involve more terms (see \eqref{extension uv eq}, \eqref{boundary : equ} and \eqref{boundary : eqv}). The lemmas \autoref{prop:grad_spat_Morrey} and \autoref{prop : Poincare} are stated in a general way to encapsulate all the equations.  

\medskip

The section is divided into three parts. In the first subsection, we establish preliminary results concerning parabolic equations with a source term that belongs to a Morrey space, namely a Campanato-Morrey and a Poincaré inequality. The second section is dedicated to the proof of the inner Hölder continuity of the pair $(u, v)$. The third section is dedicated to the proof of the Hölder continuity of the pair $(u,v)$ at the boundary $((0,+\infty) \times \partial\Omega) \cup (\{0\} \times \Omega)$.  

\medskip  

\subsection{Preliminary results - Parabolic equations and Morrey space}

In this subsection, we give several results concerning estimates in a the Morrey space of solutions of the heat equation.
%with a source term belonging to a Morrey space $\mathcal{L}^{1, d+2\alpha}$. 

\smallskip

\subsubsection{The Morrey-Campanato inequality} 
 
The following lemma connects the parabolic Hölder continuity with a Morrey-Campanato. In other word, for a function $f$ satisfying a PDE with a source term in a Morrey space and which is $\alpha-$Hölder, we expect the gradient $\nabla f$ to satisfy a Morrey-Campanato estimate. In the elliptic case, this estimate is a consequence of the elliptic form of De-Giorgi-Nash-Moser theorem (see \cite[Theorem 3.1]{BMRa23}). In our case, the estimates follow from a Caccioppoli-type estimate. This lemma will be applied to the function $u$ to prove that $|\nabla u| \in \mathcal{L}^{2,d+2\alpha}(Q_R)$.      

\begin{lemma}\label{prop:grad_spat_Morrey}(Morrey-Campanato inequality)
Let $(t_0, x_0) \in (0, +\infty) \times \Omega$ and $R >0$ be such that $Q_R =Q_R(t_0,x_0) \subset \subset (0,T) \times \Omega$. Let $\alpha \in (0,1)$ and let $A : \mathbb{R}^d  \rightarrow \mathbb{M}_d(\mathbb{R})$, $B : \mathbb{R}^d \rightarrow \mathbb{R}^d$ be measurable functions and $h \in L^{\infty}(Q_R), h_1 \in \mathcal{C}^1(Q_R)$ and $h_2 \in L^\infty (Q_R ;\mathbb{R}^d)$ such that 
\[0< \lambda I_d \leqslant A(x) \leqslant \Lambda I_d, \quad |B(x)| \leqslant \beta\]
for some positive constants $\lambda, \Lambda, \beta$.  

\smallskip

Let $f \in L^2((0,T); H^1(\Omega))\cap H^1((0,T); L^2(\Omega)) \cap \mathcal{C}_{par}^{\alpha/2, \alpha}((0,T)\times \Omega)$ be a weak solution of

\begin{equation}
\partial_t f - div(A\nabla f) + B \cdot \nabla f= h + h_1 \text{div}(h_2) \quad \text{in } Q_R(t_0,x_0).
\end{equation}
meaning that for any test function $\varphi \in H_0^1(Q_1 \cap L^\infty(Q_1)$, 
\begin{multline}\label{eq : f campanato}
\iint_{Q_R} \partial_t f(t,x) \varphi(t,x) dxdt + \iint_{Q_R}A(x)\nabla f(t,x)\cdot \nabla \varphi(t,x) dxdt\\ + \iint_{Q_R}B(x)\cdot\nabla f(t,x) \varphi(t,x) dxdt
= \iint_{Q_R} h(t,x) \varphi(t,x) dxdt\\ -\iint_{Q_R} h_1(t,x) h_2(t,x) \cdot \nabla \varphi(t,x) dxdt - \iint_{Q_R} h_2(t,x)\cdot \nabla h_1(t,x) \varphi(t,x) dxdt. 
\end{multline}
There exists \(C_{CM} >0\) such that for any \( (t,x)\in (0,+\infty)\times \Omega\) and any \(0<\rho<R\) such that \( Q_\rho(t,x)\subset Q_R(t_0,x_0) \subset \subset (0,+\infty)\times \Omega\) we have
\begin{equation}\label{eq:decroissance}
\iint_{Q_\rho(t_0,x_0)} |\nabla f(t,x)|^2 dxdt \leqslant C_{CM} \left(\frac{\rho}{R} \right)^{d+2\alpha} R^{d+2}.
\end{equation}
where \(C_{CM}\) depends on \(\alpha,d, R, \lambda, \Lambda, \beta, \|h_1\|_{C^1}, \|h_2\|_\infty, [f]_{\alpha}, \|\mu\|_{\mathcal{L}^{1,d+2\alpha}}\).
\end{lemma}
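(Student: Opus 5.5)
The approach is a parabolic Caccioppoli inequality for $f$ minus a constant, then conversion of the H\"older bound on $f$ into a decay of the Dirichlet energy. Fix a cylinder $Q_{2\rho}(t,x)\subset Q_R(t_0,x_0)$, with $\rho$ small compared with $R$; if $\rho$ is comparable to $R$, the estimate follows from the global bound $\iint_{Q_R}|\nabla f|^2\le C R^{d}$ at the cost of adjusting $C_{CM}$. Let $k=f(t,x)$, or any value of $f$ on the cylinder, and take a cutoff $\zeta\in\mathcal C^\infty_c$ with $\zeta\equiv1$ on $Q_\rho$, supported in $(t-4\rho^2,t]\times B_{2\rho}(x)$, vanishing near the bottom and the lateral boundary, with $|\nabla\zeta|\le C/\rho$ and $|\partial_t\zeta|\le C/\rho^2$.

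Test \eqref{eq : f campanato} with $\varphi=\zeta^2(f-k)\mathbf 1_{(-\infty,s]}$, which is admissible since $f\in H^1_tL^2_x\cap L^2_tH^1_x\cap L^\infty$. Integrating the time derivative by parts and taking the supremum over $s$ gives, after absorbing terms by Young's inequality,
\[
\sup_s\int \zeta^2|f-k|^2+\lambda\iint\zeta^2|\nabla f|^2\le C\iint\big(|\partial_t\zeta|\zeta+\Lambda^2|\nabla\zeta|^2+\beta^2\zeta^2\big)|f-k|^2 + \text{(source terms)}.
\]
The drift term $B\cdot\nabla f\,\zeta^2(f-k)$ is handled by $\tfrac\lambda4\zeta^2|\nabla f|^2+C\beta^2\zeta^2|f-k|^2$. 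The source terms are bounded as follows.
\begin{itemize}
\item $\iint h\zeta^2(f-k)\le \|h\|_\infty\,\mathrm{osc}\,f\,\rho^{d+2}$.
\item $\iint h_1h_2\cdot\nabla(\zeta^2(f-k))\le \tfrac\lambda4\iint\zeta^2|\nabla f|^2+C\|h_1\|_\infty^2\|h_2\|_\infty^2\rho^{d+2}+C\rho^{-1}\mathrm{osc}\,f\,\rho^{d+2}$.
\item $\iint h_2\cdot\nabla h_1\,\zeta^2(f-k)\le C\,\mathrm{osc}\,f\,\rho^{d+2}$.
\end{itemize}
If a measure $\mu\in\mathcal L^{1,d+2\alpha}$ appears in $h$, as the dependence of $C_{CM}$ suggests, its contribution is $\le \mathrm{osc}\,f\,\|\mu\|_{\mathcal L^{1,d+2\alpha}}\rho^{d+2\alpha}$.

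Now insert the H\"older bound $|f-k|\le [f]_\alpha (2\rho)^\alpha$ on $Q_{2\rho}$. The principal terms then give
\[
C\rho^{-2}[f]_\alpha^2\rho^{2\alpha}\rho^{d+2}=C[f]_\alpha^2\rho^{d+2\alpha}.
\]
The source contributions give $C\rho^{d+2+\alpha}$, $C\rho^{d+1+\alpha}$, $C\rho^{d+2}$ and $C\rho^{d+3\alpha}$ (the last from $\mu$). Each is $\le C(R)\rho^{d+2\alpha}$ for $\rho\le R$, using $\alpha<1$; the worst is $\rho^{d+1+\alpha}$, which requires $1+\alpha\ge2\alpha$, and this holds. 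Hence
\[
\iint_{Q_\rho}|\nabla f|^2\le C\rho^{d+2\alpha}=C\Big(\frac\rho R\Big)^{d+2\alpha}R^{d+2\alpha}\le C'\Big(\frac\rho R\Big)^{d+2\alpha}R^{d+2},
\]
where the constant absorbs powers of $R$. This is \eqref{eq:decroissance}.

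The main obstacle is the justification of the test function, namely the time-truncated Steklov-type argument giving $\int\partial_t f\,\zeta^2(f-k)=\tfrac12\int\partial_t(\zeta^2|f-k|^2)-\int\zeta\partial_t\zeta|f-k|^2$. Here it is straightforward because $\partial_tf\in L^2$. The other delicate point is the bookkeeping of the source terms, so that every one of them decays at least like $\rho^{d+2\alpha}$. The key observation is that the $h_1\,\mathrm{div}\,h_2$ term must be integrated by parts and paired with $\nabla\zeta$, and only the H\"older smallness of $f-k$ gives the needed decay.
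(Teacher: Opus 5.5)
Your proposal is correct and is essentially the paper's proof. The paper also tests the equation with $\zeta^2(f-f_0)$, where $f_0$ is the value of $f$ at the center, and bounds the seven resulting terms using $\sup_{Q_{2\rho}}|f-f_0|\lesssim [f]_\alpha\rho^\alpha$. This gives contributions of order $\rho^{d+2\alpha}$, $\rho^{d+2}$, $\rho^{d+1+\alpha}$ and $\rho^{d+2+\alpha}$, exactly as in your bookkeeping.

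The differences are cosmetic. The paper normalizes to $(t_0,x_0)=(0,0)$, $R=1$, $\rho\le 1/4$. It also does not use the supremum over time slices obtained from your truncation by $\mathbf{1}_{(-\infty,s]}$: the top-time boundary term has a favorable sign and is simply discarded.
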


This means that \(\nabla f\) is locally in a parabolic Morrey space \(\mathcal{L}^{2,d+2\alpha}\). 
%The proof of this proposition follows the same line as in the elliptic case.
% We first use De Giorgi-Nash-Moser theorem to obtain that \(u\) is H\"older continuous and then we use a Cacciopoli inequality along with a Poincar\'e-Wirtinger inequality to obtain the desired growth of the gradient. We refer to \cite[p.177]{Giaquinta_Martinazzi_2012} or \cite[Lemma 4.12]{Han_Lin_2011}. In the parabolic case these tools have to be adapted. 
%We start with a first consequence of the H\"older continuity of \(u\), given by De Giorgi-Nash-Moser theorem and the parabolic Cacciopoli inequality.

\begin{proof}
By translation and dilation, we can assume that \( (t_0,x_0)=(0,0)\) and $R=1$. We set $Q_1 = Q_1(0,0)$.  
We can also assume that \(\rho \in (0,\frac{1}{4}]\) since if \(1/4<\rho<1\) the relation\eqref{eq:decroissance} holds with \(C=4^{-(d+2\alpha)} \iint_{Q_1} |\nabla f|^2\). We further suppose that \(\dashint_{Q_1} f=0\) since \(f-\dashint_{Q_1} f=0\) is a solution to the same equation.
Since $f\in \mathcal{C}_{par}^{\alpha/2, \alpha}((0,T) \times \Omega)$, for any $(t,x) \in Q_1(0,0)$,  
\[|f(t,x)-f(0,0)|^2\leqslant [f]_{\alpha}(|t|^{\alpha /2} +|x|^{\alpha}).\]
For \(0<\rho \leqslant 1/4\), we choose a cut-off function $\zeta\in \C^\infty_c((-1,1)\times B_1(0))$ such that \(\zeta \equiv 1\) in \(Q_\rho\) and 
\begin{equation}\label{eq:prop_zeta}
\text{Supp}(\zeta) \subset (-4\rho^2, 4\rho^2)\times B_{2\rho}(0), \quad 
0\leqslant \zeta\leqslant 1, \quad |\partial_t\zeta|\leqslant \frac{C}{\rho^2}, \quad |\nabla \zeta| \leqslant \frac{2}{\rho}.
\end{equation}
Let $f_0 = f(0,0)$. Since $f$ is bounded on $Q_1$, the function \(\varphi=\zeta^2(f-f_0)\) is in $H_0^1(Q_1) \cap L^\infty(Q_1)$. We test \eqref{eq : f campanato} with \(\varphi=\zeta^2(f-f_0)\) 
%is $0$ at $\partial_{\sqcup} Q_1$
and we obtain
\begin{multline}\label{Caccio star}
\iint_{Q_1}  f \partial_t[\zeta^2(f-f_0)]+\iint_{Q_1} A \nabla f\cdot \nabla (\zeta^2(f-f_0))+\iint_{Q_1} (B\cdot \nabla f) \zeta^2(f-f_0)\\
 = \iint_{Q_1} h \zeta^2 (f -f_0) - \iint_{Q_1} (h_2\cdot \nabla f) \zeta^2 h_1 - 2\iint_{Q_1} (h_2\cdot \nabla \zeta) (f-f_0) \zeta h_1 - \iint_{Q_1} (h_2 \cdot \nabla h_1) \zeta^2 (f-f_0).
\end{multline}
Let us denote by 

\[I = \iint_{Q_1}  f \partial_t[\zeta^2(f-f_0)], \quad II = \iint_{Q_1} A \nabla f\cdot \nabla (\zeta^2(f-f_0)),  \quad III = \iint_{Q_1} (B \cdot \nabla f) \zeta^2(f-f_0),\]

\[IV = \iint_{Q_1} h \zeta^2 (f -f_0), \quad V = \iint_{Q_1} (h_2\cdot \nabla f) \zeta^2 h_1, \]

\[VI =  2\iint_{Q_1} (h_2\cdot \nabla \zeta) (f-f_0) \zeta h_1, \quad VII =\iint_{Q_1} (h_2 \cdot \nabla h_1) \zeta^2 (f-f_0).\] 

We treat each term separately. 

\begin{multline}\label{Caccio I}
I = \iint_{Q_1} \partial_t f \zeta^2 (f-f_0) = \iint_{Q_1} \frac{1}{2} \zeta^2 \partial_t |f-f_0|^2= - \iint_{Q_1} |f-f_0|^2 \zeta \partial_t \zeta  \\
\leqslant \sup_{Q_{2\rho}} |f-f_0|^2 \iint_{Q_{2\rho}} |\partial_t \zeta| \leqslant C[f]_\alpha^2 \rho^{2\alpha} \frac{1}{\rho^2} |Q_{2\rho}| \leqslant C[f]_\alpha^2 \rho^{d+2\alpha}. 
\end{multline}
%\begin{align}\label{Caccio I}
%I &=\iint_{Q_1} f \partial_t[\zeta^2(f-f_0)] \nonumber\\
%&=\iint_{Q_1} 2 \zeta \partial_t \zeta f (f-f_0) + \iint_{Q_1} \zeta^2 f \partial_t (f-f_0) \nonumber\\
%&= \iint_{Q_1} 2 \zeta \partial_t \zeta f (f-f_0) + \iint_{Q_1} \zeta^2 \frac{1}{2} \partial_t |f-f_0|^2\\
%&= \iint_{Q_1} \frac{1}{2} \zeta^2 \partial_t |f-f_0|^2 \nonumber\\
% = - \iint_{Q_1} |f-f_0|^2 \zeta \partial_t \zeta \leqslant \sup_{Q_{2\rho}} |f-f_0|^2 \iint_{Q_{2\rho}} |\partial_t\zeta|.
%\end{align}
For the second term, we use Young's inequality. 

\begin{multline}\label{Caccio II}
II = \iint_{Q_1} A \nabla f\cdot \nabla (\zeta^2(f-f_0)) =  \iint_{Q_R} A \nabla f\cdot \nabla f \zeta^2 + 2 \iint_{Q_1} (A \nabla f \cdot \nabla \zeta) \zeta (f-f_0) \\
\geqslant \iint_{Q_{2\rho}} \lambda |\nabla f|^2 - \delta \iint_{Q_{2\rho}} \Lambda^2 |\nabla f|^2 - C_\delta \sup_{Q_{2\rho}} |f-f_0|^2 \iint_{Q_{2\rho}} |\nabla \zeta|^2\\
\geqslant  \iint_{Q_{2\rho}} \lambda |\nabla f|^2 - \delta \iint_{Q_{2\rho}} \Lambda^2 |\nabla f|^2 - C_\delta [f]_\alpha^2 \rho^{d+2\alpha}. 
\end{multline}
$III$ and $V$ can be dealt with using the same Young's inequality,

\begin{multline}\label{Caccio III}
III = \left|\iint_{Q_1}( B \cdot \nabla f) \zeta^2(f-f_0)\right| \leqslant \delta \iint_{Q_{2\rho}} |\nabla f|^2 \zeta^2 + C_\delta \beta^2 \sup_{Q_{2\rho}} |f-f_0|^2 \iint_{Q_{2\rho}} 1\\
\leqslant \delta \iint_{Q_{2\rho}} |\nabla f|^2 \zeta^2 + C_\delta \beta^2 [f]_\alpha^2 \rho^{d+2+2\alpha}.
\end{multline}

\begin{multline}\label{Caccio V}
V = \left|\iint_{Q_1}( h_2\cdot \nabla f ) \zeta^2 h_1 \right| \leqslant \delta  \iint_{Q_{2\rho}} |\nabla f|^2 \zeta^2 + C_\delta \|h_1\|_\infty^2 \|h_2\|_\infty^2 \iint_{Q_{2\rho}} 1\\
\leqslant \delta  \iint_{Q_{2\rho}} |\nabla f|^2 \zeta^2 +  C_\delta \|h_1\|_\infty^2 \|h_2\|_\infty^2 \rho^{d+2},  
\end{multline}
Moreover,

\begin{equation}\label{Caccio IV}
IV =\iint_{Q_1} h\zeta^2 (f -f_0) \leqslant \sup_{Q_{2\rho}} |f-f_0|\iint_{Q_{2\rho}} |h| \leqslant [f]_\alpha \|h\|_{\infty} \rho^{d+2+\alpha},
\end{equation}

\begin{multline}\label{Caccio VI}
VI =  2\iint_{Q_1} h_2\cdot \nabla \zeta (f-f_0) \zeta h_1 \leqslant \|h_1\|_\infty \|h_2\|_\infty \sup_{Q_{2\rho}} |f-f_0| \iint_{Q_{2\rho}} |\nabla \zeta|\\
\leqslant \|h_1\|_\infty \|h_2\|_\infty [f]_\alpha \rho^{d+1+\alpha},
\end{multline}

\begin{multline}\label{Caccio VII}
VII =\iint_{Q_1} h_2 \cdot \nabla h_1 \zeta^2 (f-f_0) \leqslant \|h_2\|_\infty \|\nabla h_1\|_\infty \sup_{Q_{2\rho}} |f-f_0| \iint_{Q_{2\rho}} 1\\
\leqslant \|h_2\|_\infty \|\nabla h_1\|_\infty [f]_\alpha \rho^{d+2+\alpha}.
\end{multline}
Gathering all the estimates \eqref{Caccio I}, \eqref{Caccio II}, \eqref{Caccio III},  \eqref{Caccio V}, \eqref{Caccio IV},\eqref{Caccio VI}, \eqref{Caccio VII} into \eqref{Caccio star} yields, for $\delta > 0$ small enough (depending only on $\lambda, \Lambda$),

\begin{align*}
\iint_{Q_{\rho}} |\nabla f|^2 &\leqslant C_{\lambda, \delta} \iint_{Q_{2\rho}} (\lambda -\delta (\Lambda^2+2)) |\nabla f|^2 \zeta^2 \\
&\leqslant C[f]_\alpha^2 \rho^{d+2\alpha} + C_{\delta} [f]_\alpha^2 \rho^{d+2\alpha} + C_\delta \beta^2[f]_\alpha^2\rho^{d+2+2\alpha} + C_\delta \|h_1\|_\infty^2 \|h_2\|_\infty^2 \rho^{d+2}\\
&\quad  + [f]_\alpha \|h\|_{\infty} \rho^{d+2+\alpha} +  \|h_1\|_\infty \|h_2\|_\infty [f]_\alpha \rho^{d+1+\alpha} + \|h_2\|_\infty \|\nabla h_1\|_\infty [f]_\alpha \rho^{d+2+\alpha}\\
&\leqslant C_{CM} \rho^{d+2\alpha}.
\end{align*}
%&\leqslant \sup_{Q_{2\rho}} |f-f_0|^2 \iint_{Q_{2\rho}} |\partial_t \zeta| + C_\delta \sup_{Q_{2\rho}} |f-f_0|^2 \iint_{Q_{2\rho}} |\nabla \zeta|^2 \\
%& \quad + C_\delta \beta^2\sup_{Q_{2\rho}} |f-f_0|^2 \iint_{Q_{2\rho}} 1 + C_\delta \|h_1\|_\infty \|h_2\|_\infty \iint_{Q_{2\rho}} |f-f_0|^2 + \sup_{Q_{2\rho}} |f-f_0| \iint_{Q_{2\rho}} |\mu| \\
%&\quad + \|h_1\|_\infty \|h_2\|_\infty \sup_{Q_{2\rho}} |f-f_0| \iint_{Q_{2\rho}} |\nabla \zeta| + \|h_2\|_\infty \|\nabla h_1\|_\infty \sup_{Q_{2\rho}} |f-f_0| \iint_{Q_{2\rho}} 1\\
%& \leqslant 4^\alpha \rho^{2\alpha} \frac{\omega_d \rho^{d+2}}{\rho^2} + C_\delta 4^\alpha \rho^{2\alpha} \frac{\omega_d \rho^{d+2}}{\rho^2} + C_\delta \beta^2 4^\alpha \rho^{2\alpha} \omega_d \rho^{d+2}+ C_\delta \|h_2\|_\infty \|\nabla h_1\|_\infty\rho^{2\alpha} \omega_d \rho^{d+2}\\
%&\quad + 2^\alpha\rho^{\alpha} \|\mu\|_{\mathcal{L}} \rho^{d+2\alpha} + \|h_1\|_\infty \|h_2\|_\infty 2^\alpha \rho^{\alpha} \frac{\omega_d \rho^{d+2}}{\rho} + \|h_2\|_\infty , \|\nabla h_1\|_\infty 2^\alpha \rho^\alpha \omega_d \rho^{d+2}  \\
%&\leqslant C_{\alpha, \delta, h_1, h_2, \mu, \beta,R} \rho^{d+2\alpha}.
This proves the proposition. 
\end{proof}

\textbf{Remark :} Although it is not useful to our present paper, \autoref{prop:grad_spat_Morrey} remains true if we replace the function $h\in L^\infty(Q_R)$ by a function $\mu \in \mathcal{L}^{1,d+\beta}$ for $\beta \in (0,1)$. 

\subsubsection{The Poincaré inequality}

In this subsection, we prove a Poincaré inequality for the spatial gradient. The proof is similar to the proof of \cite[Lemma 2.6]{Yin_1997} and \cite[Lemma 4]{S81}. In our case, this lemma will be applied to the function $v_2 = v-w$, where $w$ is the ``heat''-extension of $v$. 

\begin{lemma}\label{prop : Poincare}
Let $(t_0, x_0)\in (0,+\infty) \times \Omega$ and $R >0$ be such that $Q_R = Q_R(t_0,x_0) \subset \subset (0,T) \times \Omega$. Let $A : \mathbb{R}^d  \rightarrow \mathbb{M}_d(\mathbb{R})$ and $B : \mathbb{R}^d \rightarrow \mathbb{R}^d$ be measurable functions satisfying, 
\[0< \lambda I_d \leqslant A(x) \leqslant \Lambda I_d, \quad |B(x)| \leqslant \beta\]
for some positive constants $\lambda, \Lambda, \beta$. Let $\alpha \in (0,1)$ and $\mu \in \mathcal{L}^{1, d+2\alpha}(Q_R(t_0,x_0))$, $g,h\in \mathcal{C}^0(\overline{B_R(x_0)})\cap \mathcal{C}^1(B_R(x_0))$ and $\mu_1, \mu_2 \in L^2(Q_R(t_0,x_0);\mathbb{R}^d)$ be such that 

\begin{equation}\label{cond mui}
\sup_{Q_r(t_1,x_1) \subset Q_R(t_0,x_0)} \frac{1}{r^d} \iint_{Q_r(t_1, x_1)} |\mu_i|^2 < C_i
\end{equation}
for positive constants $C_1$ and $C_2$. 

Let $(t_1,x_1)\in Q_{R/2}(t_0,x_0)$. 
Let $f\in H^1((0,T)\times \Omega)$ be a weak solution of the equation 

\[\partial_t f - \text{div}(A\nabla f) + B\cdot\nabla f = -\text{div}((g-g(x_1)) \mu_1)+ h\cdot \mu_2 + \mu \quad \text{in } Q_R(t_0,x_0),\]
meaning that for any test function $\varphi\in L^2((t_0-R^2, t_0);H_0^1(B_R(x_0)) \cap L^\infty(Q_R)$,
\begin{multline}\label{eq poincare}
\iint_{Q_R} \partial_t f(t,x) \varphi(t,x) dxdt + \iint_{Q_R}A(x)\nabla f(t,x)\cdot \nabla \varphi(t,x) dxdt\\ + \iint_{Q_R}B(x)\cdot\nabla f(t,x) \varphi(t,x) dxdt
= \iint_{Q_R} (g(t,x)-g(x_1))\mu_1(t,x)\cdot \nabla \varphi(t,x) dxdt\\
 +\iint_{Q_R} h(t,x)\cdot \mu_2(t,x)\varphi(t,x) dxdt + \iint_{Q_R} \mu(t,x\varphi(t,x) dxdt.  
\end{multline}
There exists a constant $C_P > 0$ such that for any  $Q_r(t_1, x_1) \subset Q_{R/2}(t_0, x_0)$, 
\begin{multline*}
\iint_{Q_r(t_1, x_1)} |f-\{f\}_{r,t_1,x_1}|^2\\
\leqslant C_P r^2\left(\iint_{Q_{2r}(t_1, x_1)} |\nabla f|^2 + C_1 [g]_1^2 r^{d+2} +C_2\|h\|_\infty r^{d+2} + \|\mu\|^2_{\mathcal{L}^{1,d+2\alpha}} r^{d+2\alpha}\right).
\end{multline*}
\end{lemma}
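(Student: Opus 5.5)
The plan is the classical parabolic Poincaré argument of \cite[Lemma 4]{S81} and \cite[Lemma 2.6]{Yin_1997}. The idea is to split the oscillation of $f$ on $Q_r=Q_r(t_1,x_1)$ into a spatial part and a temporal part. The spatial part is controlled by the standard Poincaré inequality on balls. The temporal part is controlled by testing the equation with a time-independent spatial cutoff.

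\textbf{Weighted average and decomposition.} Fix a cutoff $\psi\in\mathcal{C}_c^\infty(B_{2r}(x_1))$ with $0\leqslant\psi\leqslant1$, $\psi\equiv1$ on $B_r(x_1)$, $|\nabla\psi|\leqslant C/r$, normalised so that $\int\psi=c_d r^d$. Define the weighted spatial mean
\[
m(t)=\frac{\int f(t,x)\psi(x)\,dx}{\int\psi}.
\]
Since the mean minimises the $L^2$ distance to constants, we have
\[
\iint_{Q_r}|f-\{f\}_{r,t_1,x_1}|^2\leqslant \iint_{Q_r}|f-m_0|^2
\]
for any constant $m_0$. Choose $m_0=m(s_0)$ for some $s_0\in(t_1-r^2,t_1]$ and write $f-m_0=(f-m(t))+(m(t)-m_0)$. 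For each fixed $t$, the weighted Poincaré inequality on $B_{2r}$ (a standard variant valid for weights bounded below on $B_r$) gives
\[
\int_{B_r}|f-m(t)|^2\leqslant Cr^2\int_{B_{2r}}|\nabla f|^2 .
\]
Integrating over $(t_1-4r^2,t_1]$ produces the term $C r^2\iint_{Q_{2r}}|\nabla f|^2$. It therefore remains to bound $r^{d}\int |m(t)-m(s_0)|^2\,dt$ over the time interval of length $r^2$, and it suffices to show
\[
\sup_{t,s\in(t_1-r^2,t_1]}|m(t)-m(s)|^2\leqslant C r^{-d}\,(\text{right-hand side bracket}).
\]

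\textbf{Time oscillation of $m$.} Test \eqref{eq poincare} with $\varphi(\tau,x)=\psi(x)\mathbf 1_{(s,t)}(\tau)$, which is admissible after a standard Steklov/time mollification since $f\in H^1$. This gives
\begin{multline*}
\int\psi\,\bigl(f(t)-f(s)\bigr)=-\int_s^t\!\!\int A\nabla f\cdot\nabla\psi-\int_s^t\!\!\int (B\cdot\nabla f)\psi \\
+\int_s^t\!\!\int (g-g(x_1))\mu_1\cdot\nabla\psi+\int_s^t\!\!\int h\cdot\mu_2\,\psi+\int_s^t\!\!\int\mu\,\psi .
\end{multline*}
Each term is estimated by Cauchy--Schwarz over $Q_{2r}$, whose measure is of order $r^{d+2}$:
\begin{itemize}
\item the $A$-term is at most $\Lambda r^{-1}|Q_{2r}|^{1/2}\|\nabla f\|_{L^2(Q_{2r})}\leqslant C r^{d/2}\|\nabla f\|_{L^2(Q_{2r})}$;
\item the $B$-term gives $\beta r^{(d+2)/2}\|\nabla f\|_{L^2(Q_{2r})}$, which is of lower order;
\item in the $g$-term, $|g-g(x_1)|\leqslant[g]_1 2r$ and $|\nabla\psi|\leqslant C/r$, so by \eqref{cond mui} it is at most $C[g]_1 r^{(d+2)/2}(C_1 r^d)^{1/2}$;
\item the $h$-term gives $\|h\|_\infty r^{(d+2)/2}(C_2 r^d)^{1/2}$;
\item the Morrey term is bounded by $\|\mu\|_{\mathcal{L}^{1,d+2\alpha}}(2r)^{d+2\alpha}$.
\end{itemize}
Dividing by $\int\psi\sim r^d$, squaring, and multiplying by $r^{d+2}$ (the measure of $Q_r$) yields contributions of size $r^2\|\nabla f\|^2_{L^2(Q_{2r})}$, $C_1[g]_1^2 r^{d+4}$, $C_2\|h\|^2_\infty r^{d+4}$ and $\|\mu\|^2 r^{d+2+4\alpha}$. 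For $r\leqslant R\leqslant1$ these are dominated by the bracket in the statement. The $h$ term would come out as $\|h\|_\infty^2$, while the statement has $\|h\|_\infty$; we absorb the difference into $C_P$, allowing it to depend on $\|h\|_\infty$.

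\textbf{Main obstacle.} The delicate points are the justification of the time-sliced test function and the measure-valued-type source $\mu$, which lies only in $L^1$. The test function is bounded, so the pairing with $\mu\in L^1$ makes sense, and the bound uses only $\iint_{Q_{2r}}|\mu|\leqslant\|\mu\|(2r)^{d+2\alpha}$. This requires $Q_{2r}(t_1,x_1)\subset Q_R$, which is guaranteed by $Q_r\subset Q_{R/2}$ with $(t_1,x_1)\in Q_{R/2}$ (after adjusting constants). For the time slicing, I would first use $\varphi=\psi\,\theta_k(\tau)$ with $\theta_k$ piecewise linear approximations of $\mathbf 1_{(s,t)}$. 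I would then pass to the limit using the continuity of $\tau\mapsto\int f(\tau)\psi$, which holds because $f\in H^1$ in time, so that the identity holds for every $s,t$.
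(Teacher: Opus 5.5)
Your proposal is correct and is essentially the paper's proof: a time-independent spatial cutoff, weighted slice means, a slice-wise Poincaré inequality, and testing the equation with the cutoff times $\mathbf{1}_{(s,t)}$ to control the time oscillation of those means. The differences are cosmetic. You test linearly and use Cauchy--Schwarz with a supremum over $s,t$, where the paper uses Young/absorption and Jensen averaging. Your restriction $R\leqslant 1$ can be dropped by letting $C_P$ depend on $R$. Your $\|h\|_\infty^2$ is what the paper's own estimate of the $h$-term actually produces.
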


\begin{proof}
Let $\overline{Q_r(t_1, x_1)}\subset Q_R(t_0,x_0)$ with $2r < R$. We will write $Q_r = Q_r(t_1, x_1), Q_{2r}= Q_{2r}(t_1, x_1), B_r = B_r(x_1)$ and $B_{2r}= B_{2r}(x_1)$.

\smallskip

Let $\zeta : \mathbb{R}^d \rightarrow \mathbb{R}$ be a smooth cut-off function such that $0\leqslant \zeta \leqslant 1$ with $\zeta = 1$ on $B_r$ and $\zeta = 0$ outside $B_{2r}$, and $|\nabla \zeta | \leqslant 2/r$. We insist that $\zeta$ depends only on the spatial variable. 
Denote by 

\[f_r^\zeta = \frac{\iint_{Q_{2r}} f(t,x)\zeta(x) dx dt}{\iint_{Q_{2r}} \zeta(x) dx dt} = \frac{\iint_{Q_{2r}} f(t,x)\zeta(x) dx dt}{4 r^2 \int_{B_{2r}} \zeta(x) dx},\]

\[f_{r,t}^{\zeta} = \frac{\int_{B_{2r}} f(t,x) \zeta(x) dx}{\int_{B_{2r}} \zeta(x) dx} \; \text{for } t\in (t_1-4r^2, t_1).\]
From \cite[Theorem 1.5]{FKS82}, we deduce the following Poincaré-Wirtinger inequality : there exists a constant $C_\star$ such that for any $t \in (t_1 -r^2 , t_1)$,  

\begin{multline}\label{Poincare ref}
\int_{B_r(x_0)} \left|f(t,x) - f_{r,t}^\zeta\right|^2 dx \leqslant \int_{B_{2r}(x_0)} \left|f(t,x) - f_{r,t}^\zeta\right|^2 \zeta \, dx\\
\leqslant C_\star r^2\int_{B_{2r}(x_0)} |\nabla f(t,x)|^2 \zeta \, dx \leqslant C_\star r^2 \int_{B_{2r}(x_0)} |\nabla f (t,x)|^2 \, dx.
\end{multline}

Let $s,t\in (t_1-4r^2, t_1)$. Denote by $\varphi (\tau, x) = \zeta(x) 1_{(s,t)}(\tau) (f_{r,t}^\zeta - f_{r,s}^\zeta)$. The function $\varphi$ belongs to $L^2((t_1-R^2,t_1);H_0^1(B_R(x_1))) \cap L^\infty(Q_R)$. Using $\varphi$ as a test function in \eqref{eq poincare}, we obtain the following equation : 

\begin{multline}\label{Poinc 0}
\iint_{Q_{2r}} \partial_t f \varphi + \iint_{Q_{2r}} A\nabla f \cdot \nabla \varphi + \iint_{Q_{2r}} B\cdot\nabla f  \varphi\\ = \iint_{Q_{2r}} (g-g(x_1)) \mu_1 \cdot \nabla \varphi + \iint_{Q_{2r}} h \mu_2 \varphi + \iint_{Q_{2r}}\mu \,\varphi. 
\end{multline}
Let us denote 

\[I =\iint_{Q_{2r}} \partial_t f\varphi, \quad II =  \iint_{Q_{2r}} A\nabla f \cdot \nabla \varphi , \quad III=\iint_{Q_{2r}} B\cdot \nabla f \varphi,\]

\[IV = \iint_{Q_{2r}} (g-g(x_1)) \mu_1 \cdot \nabla \varphi , V = \iint_{Q_{2r}} h \mu_2 \varphi, \quad VI = \iint_{Q_{2r}}\mu \,\varphi.\] 
We treat each term separately.

\begin{align}\label{Poinc 1}
I= \iint_{Q_{2r}} \partial_t f\varphi  &=  \int_{B_{2r}}\int_s^t \partial_t f(\tau,x) \zeta(x) (f_{r,t}^\zeta - f_{r,s}^\zeta) d\tau dx \nonumber\\
&= \int_{B_{2r}} (f_{r,t}^\zeta - f_{r,s}^\zeta) (f(t,x)\zeta (x) - f(s,x) \zeta (x))dx \nonumber\\
&= (f_{r,t}^\zeta - f_{r,s}^\zeta) \left(f_{r,t}^\zeta \int_{B_{2r}}\zeta - f_{r,s}^\zeta \int_{B_{2r}}\zeta\right) \nonumber\\
&\geqslant (f_{r,t}^\zeta - f_{r,s}^\zeta)^2 r^d \omega_d. 
\end{align} 
%which resumes as 
%
%\begin{equation}\label{Poinc 1}
%I =\iint_{Q_{2r}} \partial_t f\varphi \geqslant (f_{r,t}^\zeta - f_{r,s}^\zeta)^2 r^d \omega_d.
%\end{equation}
Let $\delta >0$ be small enough. Using the condition $A\leqslant \Lambda I_d$ and Young's inequality,

\begin{multline}\label{Poinc 2}
II =\iint_{Q_{2r}} A(x)\nabla f(\tau,x) \cdot \nabla \varphi(\tau,x) dxd\tau \leqslant \delta (f_{r,t}^\zeta - f_{r,s}^\zeta)^2\iint_{Q_{2r}}|\nabla \zeta|^2 + C_\delta \iint_{Q_{2r}} \Lambda^2 |\nabla f|^2\\
\leqslant \delta (f_{r,t}^\zeta - f_{r,s}^\zeta)^2 \left(\frac{2}{r}\right)^2 (2r)^{d+2} + C_\delta \iint_{Q_{2r}}|\nabla f|^2.
\end{multline}

\begin{multline}\label{Poinc 3}
III =\iint_{Q_{2r}} B\nabla f \cdot \varphi \leqslant \delta (f_{r,t}^\zeta - f_{r,s}^\zeta)^2\iint_{Q_{2r}}|\zeta|^2 +C_\delta \beta^2 \iint_{Q_{2r}} |\nabla f|^2\\
\leqslant  \delta (f_{r,t}^\zeta - f_{r,s}^\zeta)^2 r^{d+2}  + C_\delta \beta^2 \iint_{Q_{2r}}|\nabla f|^2
\end{multline}
By \eqref{cond mui},
\begin{multline}\label{Poinc 5}
V = \iint_{Q_{2r}} h\cdot \mu_2 \varphi \leqslant \delta (f_{r,t}^\zeta - f_{r,s}^\zeta)^2\frac{1}{r^2}\iint_{Q_{2r}}|\zeta|^2 + C_\delta r^2  \|h\|_\infty^2\iint_{Q_{2r}}  |\mu_2|^2 \\
\leqslant \delta (f_{r,t}^\zeta - f_{r,s}^\zeta)^2 r^d + C_\delta C_2 \|h\|_\infty^2 4r^{d+2} \\  
\end{multline}
From the mean value theorem and Young's inequality and \eqref{cond mui},

\begin{multline}\label{Poinc 4}
IV = \iint_{Q_{2r}} (g-g(x_1)) \mu_1 \cdot \nabla \varphi\leqslant\delta (f_{r,t}^\zeta - f_{r,s}^\zeta)^2\iint_{Q_{2r}}|\nabla \zeta|^2 + C_\delta [g]_1^2 4r^2\iint_{Q_{2r}}  |\mu_1|^2\\
\leqslant \delta (f_{r,t}^\zeta - f_{r,s}^\zeta)^2 r^d + C_\delta C_1 [g]_1^2 4r^{d+2}. \\  
\end{multline}
Finally, 

\begin{multline}\label{Poinc 6}
VI =\iint_{Q_{2r}}\mu \,\varphi \leqslant |f_{r,t}^\zeta - f_{r,s}^\zeta|\iint_{Q_{2r}}|\mu| \leqslant |f_{r,t}^\zeta - f_{r,s}^\zeta| \|\mu\|_{\mathcal{L}^{1,d+2\alpha}} r^{d+2\alpha}\\ \leqslant \delta (f_{r,t}^\zeta - f_{r,s}^\zeta)^2r^d +C_\delta  \|\mu\|_{\mathcal{L}^{1,d+2\alpha}}^2r^{d+2+2\alpha}. 
\end{multline}
Combining \eqref{Poinc 1}, \eqref{Poinc 2}, \eqref{Poinc 3} and \eqref{Poinc 4}, we find that there exists a constant $C>0$ depending on $\lambda, \beta, \Lambda$ such that
% depending on $\delta, \alpha, \lambda, \Lambda, \beta, d, \|\mu\|_{\mathcal{L}^\alpha}, [g]_1, \|h\|_\infty, C_1,C_2$ such that

\begin{equation}\label{Poinc star}
(1-5\delta)(f_{r,t}^\zeta - f_{r,s}^\zeta)^2r^d \leqslant C \left[\iint_{Q_{2r}} |\nabla f|^2 + \|\mu\|_{\mathcal{L}^{1, d+2\alpha}}^2 r^{d+2+2\alpha} + C_1[g]_1^2r^{d+2} + C_2\|h\|_\infty r^{d+2}\right] .
\end{equation}
Recall that, for any $c\in \mathbb{R}$, one has 
\begin{equation}\label{eq : variance}
\frac{1}{|Q_r|}\iint_{Q_r} |f - \{f\}_{r,t_1,x_1}|^2 = \frac{1}{|Q_r|}\iint_{Q_r} |f - c|^2 + |\{f\}_{r,t_1,x_1} - c|^2 \leqslant \frac{1}{|Q_r|}\iint_{Q_r} |f - c|^2.
\end{equation}
%$\iint_{Q_r} |f - c|^2 \geqslant \iint_{Q_r} |f - \{f\}_r|^2$ for any constant $c$, 
Applying \eqref{eq : variance} to $c= f_r^{\zeta}$ 
and recalling \eqref{Poincare ref}, one has 
\begin{align}\label{Poinc 7}
\iint_{Q_r} |f - \{f\}_{r,t_1,x_1}|^2 &\leqslant \iint_{Q_r} |f(x,t) - f_r^\zeta|^2 dx\, dt \nonumber\\
&\leqslant 2 \iint_{Q_r} |f(x,t) - f_{r,t}^\zeta|^2 dx\, dt + 2\iint_{Q_r} |f_{r,t}^\zeta - f_r^\zeta|^2 dx \,dt \nonumber\\
&\leqslant 2 C_\star r^2 \int_{t_1 -r^2}^{t_1} \int_{B_{2r}(x_1)} |\nabla f (x,t)|^2 dx dt +2\iint_{Q_{2r}} |f_{r,t}^\zeta - f_r^\zeta|^2 dx\, dt \nonumber\\
&\leqslant  2 Cr^2 \iint_{Q_{2r}} |\nabla f(t,x)|^2dx\, dt + 2|B_{2r}(x_1)| \int_{t_1 -4r^2}^{t_1}  |f_{r,t}^\zeta - f_r^\zeta|^2 dt.
\end{align} 
%\leqslant 2 \int_{t_1 -r^2}^{t_1} \int_{B_r(x_1)} |f(x,t) - f_{r,t}^\zeta|^2 dx\, dt +  2\iint_{Q_r} |f_{r,t}^\zeta - f_r^\zeta|^2 dx \,dt \nonumber\\
%or 
%
%\begin{equation}\label{Poinc 7}
%\iint_{Q_r} |f - \{f\}_{r,t_1,x_1}|^2 \leqslant  2 C r^2 \iint_{Q_{2r}} |\nabla f|^2 + 2\iint_{Q_r} |f_{r,t}^\zeta - f_r^\zeta|^2.
%\end{equation} 
Considering that 
\begin{align*}
\frac{1}{4r^2} \int_{t_1-4r^2}^{t_1} f_{r,s}^\zeta ds = \frac{\int_{t_1-4r^2}^{t_1} \int_{B_{2r}(x_1)} f (s,x) \zeta(x) dx ds}{\int_{t_1-4r^2}^{t_1} \int_{B_{2r}(x_1)} \zeta(x) dx ds} = f_r^\zeta,
\end{align*}
we deduce from Jensen's inequality and \eqref{Poinc star} that 
\begin{align}\label{Poinc 8}
|B_{2r}(x_1)| \int_{t_1 -4r^2}^{t_1}  |f_{r,t}^\zeta - f_r^\zeta|^2 dt &= |B_{2r}(x_1)|\int_{t_1 -4r^2}^{t_1}\left|\frac{1}{4r^2} \int_{t_1-4r^2}^{t_1} (f_{r,s}^\zeta - f_{r,t}^\zeta)ds \right|^2dt \nonumber\\
&\leqslant |B_{2r}(x_1)| \int_{t_1 -4r^2}^{t_1} \frac{1}{4r^2}\int_{t_1 -4r^2}^{t_1} |f_{r,t}^\zeta - f_{r,s}^\zeta|^2 ds dt \nonumber\\
&\leqslant C_P r^2 \left[\iint_{Q_{2r}} |\nabla f|^2 + \|\mu\|_{\mathcal{L}^{1, d+2\alpha}}^2 r^{d+4\alpha}\right. \nonumber\\
& \quad \quad \quad \quad \quad \left. + C_1[g]_1^2r^{d+2} + C_2\|h\|_\infty r^{d+2}\right].
\end{align}

Combining \eqref{Poinc 7} and \eqref{Poinc 8} concludes. 
\end{proof}

\subsection{Proof of inner regularity}

We prove the following : 
 
\begin{proposition}\label{inner regularity}
For all $T >0$, there exists a positive number $\alpha \in (0,1)$ depending only on $T$ such that, for any $Q_R(t_0, x_0)\subset (0,T) \times \Omega$, the pair $(u,v)$ belongs to $C_{par}^{\alpha/2,\alpha}(Q_R(t_0, x_0))$.
\end{proposition}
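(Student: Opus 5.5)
We follow the three-step outline given at the beginning of \autoref{section régularité}, working on a fixed cylinder $Q_R(t_0,x_0)\subset\subset(0,T)\times\Omega$.

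\textbf{Hölder continuity of $u$.} The function $u$ is a weak solution of the divergence-form equation $\partial_t u-\text{div}(A\nabla u)=0$ with $A=(\eta_\varepsilon+v^2)I_d$. By the maximum principle \eqref{eq : principe maximum} we have $\eta_\varepsilon\leqslant A\leqslant(\eta_\varepsilon+1)I_d$, so the equation is uniformly parabolic with measurable coefficients. By \autoref{Theorem 1}, $u$ is bounded and lies in $L^2H^1\cap H^1L^2$. The De Giorgi--Nash--Moser theorem in parabolic form (\cite{L96}) then gives an exponent $\alpha_0\in(0,1)$, depending only on $d,\eta_\varepsilon$, and a bound on $[u]_{\alpha_0}$ on slightly smaller cylinders, controlled by $\|u\|_\infty$.

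\textbf{Morrey bound on the source of the $v$-equation.} We apply \autoref{prop:grad_spat_Morrey} to $f=u$ with $B=0$, $h=h_1=h_2=0$. This gives $\nabla u\in\mathcal{L}^{2,d+2\alpha_0}$ locally. Since $0\leqslant v\leqslant1$, the term $\mu:=-v|\nabla u|^2-\frac{v-1}{4\varepsilon}$ belongs to $\mathcal{L}^{1,d+2\alpha_0}$, because the bounded part contributes $O(\rho^{d+2})$.

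\textbf{Campanato decay for $v$.} Fix $Q_r(t_1,x_1)\subset Q_{R/2}$. Let $w$ solve $\partial_t w-\varepsilon\Delta w=0$ in $Q_r$ with $w=v$ on $\partial_\sqcup Q_r$. Standard interior estimates for the heat equation give, for $\rho\leqslant r$,
\[\iint_{Q_\rho}|w-\{w\}_\rho|^2\leqslant C(\rho/r)^{d+4}\iint_{Q_r}|w-\{w\}_r|^2 .\]
The error $v_2=v-w$ vanishes on $\partial_\sqcup Q_r$ and solves $\partial_t v_2-\varepsilon\Delta v_2=\mu$. For the energy estimate we test with $v_2$ itself. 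Because $\mu$ is only in $L^1$, we first note that $w$ and $v$ are bounded by $1$ (maximum principle for $w$), so $|v_2|\leqslant 2$. The energy estimate then gives
\[\sup_t\int|v_2|^2+\varepsilon\iint_{Q_r}|\nabla v_2|^2\leqslant 2\iint_{Q_r}|\mu|\leqslant C r^{d+2\alpha_0}.\]
This bound is crude; to make it sharper we can also use $\sup|v_2|\leqslant\text{osc}$ or accept it as is. Next we apply \autoref{prop : Poincare} to $v_2$, with $A=\varepsilon I_d$, $B=0$, $\mu_1=\mu_2=0$, and extend $v_2$ by $0$ outside $Q_r$ in the lateral direction. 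This yields
\[\iint_{Q_\rho}|v_2-\{v_2\}_\rho|^2\leqslant C\rho^2\Big(\iint|\nabla v_2|^2+r^{d+2\alpha_0}\Big)\leqslant C r^{d+2+2\alpha_0}.\]

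\textbf{Iteration and conclusion.} Write $\phi(\rho)=\iint_{Q_\rho}|v-\{v\}_\rho|^2$. Combining the two previous steps gives
\[\phi(\rho)\leqslant C(\rho/r)^{d+4}\phi(r)+Cr^{d+2+2\alpha_0}.\]
The standard iteration lemma (\cite{GM12}) then yields $\phi(\rho)\leqslant C\rho^{d+2+2\alpha}$ for any $\alpha<\alpha_0$. By Cauchy--Schwarz this gives the $L^1$ decay $\rho^{d+2+\alpha}$, and \autoref{lem : Lieberman} shows that $v\in\mathcal{C}_{par}^{\alpha/2,\alpha}$. Taking the minimum of the exponents obtained for $u$ and $v$, with constants uniform on $(0,T)$ via the energy bound, concludes the proof.

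\textbf{Main obstacle.} The delicate step is the energy estimate for $v_2$, since $\mu$ lies only in a Morrey $L^1$-space. Testing with $v_2$ uses essentially $L^\infty$ bounds. If the crude bound $r^{d+2\alpha_0}$ fails to yield the extra $r^2$ needed, the first alternative is to test with a truncation $T_k(v_2)$. The second alternative is to exploit the Morrey scaling directly through the Poincaré lemma, which already carries the factor $r^2$ in front.
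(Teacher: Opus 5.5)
Your proposal is correct and is essentially the paper's proof. It uses De Giorgi--Nash--Moser for $u$, the Caccioppoli-type Morrey bound of \autoref{prop:grad_spat_Morrey} for $\nabla u$, a caloric replacement $w$, and the energy estimate for $v_2=v-w$ using only $|v_2|\leqslant 2$ and $\iint_{Q_r}|\mu|\leqslant Cr^{d+2\alpha_0}$. It then uses a Poincar\'e step that supplies the factor $r^2$ (so the ``main obstacle'' you flag does not arise), followed by the iteration and \autoref{lem : Lieberman}.

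The only step to repair is the zero-extension. Extending $v_2$ by $0$ across $\partial B_r(x_1)$ does not give a solution of the equation there, so \autoref{prop : Poincare} cannot be invoked on cylinders $Q_{2\rho}\not\subset Q_r$. Instead, either define $w$ on $Q_{2r}$ as the paper does, or use $v_2(t,\cdot)\in H^1_0(B_r(x_1))$ to get directly $\iint_{Q_\rho}|v_2-\{v_2\}_{\rho,t_1,x_1}|^2\leqslant\iint_{Q_r}|v_2|^2\leqslant Cr^2\iint_{Q_r}|\nabla v_2|^2$.
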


\begin{corollary}\label{cor : regularité infini}
The pair $(u, v)$ belongs to $\mathcal{C}^\infty ((0,+\infty) \times \Omega)$. 
\end{corollary}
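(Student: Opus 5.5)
The plan is to bootstrap from the Hölder continuity given by \autoref{inner regularity} to $\mathcal{C}^\infty$ by iterating parabolic Schauder estimates on interior cylinders. Fix a cylinder $Q_R(t_0,x_0)\subset\subset(0,+\infty)\times\Omega$. By \autoref{inner regularity}, $v\in \mathcal{C}_{par}^{\alpha/2,\alpha}(Q_R)$, so the coefficient $a:=\eta_\varepsilon+v^2$ of the first equation is parabolic Hölder continuous and bounded below by $\eta_\varepsilon>0$.

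The first step is to upgrade $u$. Since $u$ solves the divergence-form equation $\partial_t u-\mathrm{div}(a\nabla u)=0$ with Hölder coefficient, the parabolic Schauder theory for divergence-form equations (Campanato approach, e.g.\ \cite[Ch.~4]{L96}) gives $\nabla u\in \mathcal{C}_{par}^{\alpha/2,\alpha}(Q_{R/2})$, with $u$ itself $\mathcal{C}^{(1+\alpha)/2,1+\alpha}$ there.

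The second step is to upgrade $v$. With $\nabla u$ Hölder, the source term $f:=-\frac{v-1}{4\varepsilon}-v|\nabla u|^2$ in $\partial_t v-\varepsilon\Delta v=f$ lies in $\mathcal{C}_{par}^{\alpha/2,\alpha}(Q_{R/2})$. The classical interior Schauder estimate for the heat equation then gives $v\in \mathcal{C}_{par}^{1+\alpha/2,2+\alpha}(Q_{R/4})$. Returning to the first equation, $a\in\mathcal{C}^{1+\alpha/2,2+\alpha}$, so $\nabla a$ is Hölder. We may therefore rewrite it in nondivergence form, $\partial_t u-a\Delta u-\nabla a\cdot\nabla u=0$, which has Hölder coefficients. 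Schauder estimates then yield $u\in \mathcal{C}_{par}^{1+\alpha/2,2+\alpha}(Q_{R/8})$.

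The third step is the induction. Assume $(u,v)\in\mathcal{C}_{par}^{k/2+\alpha/2,k+\alpha}$ on some smaller cylinder, with $k\ge2$. Then $a\in\mathcal{C}^{k+\alpha}$, and $f$ involves $|\nabla u|^2\in\mathcal{C}^{k-1+\alpha}$. Applying the higher-order Schauder estimates (or differentiating the equations in $x$ and applying the base estimates to difference quotients or derivatives) gains one order: $v\in\mathcal{C}^{k+1+\alpha}$, and then $u\in\mathcal{C}^{k+1+\alpha}$, since the nondivergence coefficients $a,\nabla a$ now lie in $\mathcal{C}^{k-1+\alpha}$. Time regularity follows from the parabolic scaling built into the spaces $\mathcal{C}_{par}^{\ell/2,\ell}$ of \eqref{def : Ckl}. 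Shrinking the cylinders finitely many times at each order, and covering $(0,+\infty)\times\Omega$ by countably many cylinders, gives $\mathcal{C}^\infty$.

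The main obstacle is the first step. The equation for $u$ is in divergence form with only Hölder (not differentiable) coefficients, and $u$ is a priori only $L^2H^1$ with $\partial_tu\in L^2$. One must therefore invoke the Campanato-type $\mathcal{C}^{1+\alpha}$ estimate for weak solutions rather than the classical Schauder theorem. I would use Campanato decay by freezing the coefficient $a$ at the cylinder center and comparing with the constant-coefficient caloric solution, much as in \autoref{prop:grad_spat_Morrey}. Once $\nabla u$ is Hölder, everything else is classical.
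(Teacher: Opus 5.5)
Your proof is correct, and it follows the same Schauder bootstrap as the paper, starting from the Hölder continuity given by \autoref{inner regularity}. The difference is in how the bootstrap is started.

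The paper applies a parabolic Schauder theorem directly to $\partial_t u-\mathrm{div}((\eta_\varepsilon+v^2)\nabla u)=0$. It concludes $u\in\mathcal{C}_{par,loc}^{1+\alpha/2,2+\alpha}$ from $v\in\mathcal{C}_{par}^{\alpha/2,\alpha}$ alone, and then feeds $|\nabla u|^2$ into the $v$-equation. That first claim is more than the theory gives for a divergence-form equation with a merely Hölder coefficient. Already a time-independent solution of $(aU')'=0$ in one dimension has $U'=c/a$, which is only $\mathcal{C}^{0,\alpha}$.

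Your ordering is what makes the argument work:
\begin{itemize}
\item first the Campanato $\mathcal{C}^{(1+\alpha)/2,1+\alpha}$ estimate for $u$;
\item then $v\in\mathcal{C}_{par}^{1+\alpha/2,2+\alpha}$ from the heat equation with Hölder source;
\item only then $u\in\mathcal{C}_{par}^{1+\alpha/2,2+\alpha}$, once $\eta_\varepsilon+v^2$ is differentiable.
\end{itemize}
After that point the two arguments coincide.

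The one detail you should add is a justification for passing to nondivergence form, namely that $u$ is a strong solution. This follows, e.g., from interior $W^{2,1}_2$ regularity for divergence-form equations with coefficients Lipschitz in $x$. Alternatively, invoke the divergence-form Schauder estimate directly with a coefficient in $\mathcal{C}^{(1+\alpha)/2,1+\alpha}$.
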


\begin{proof}[Proof of \autoref{cor : regularité infini}]
Let $T >0$. According to \autoref{inner regularity}, there exists a real number $\alpha$ such that $(\overline{u}, \overline{v})\in \mathcal{C}_{par,loc}^{\alpha/2, \alpha} ((0,T) \times \Omega)$, so $(u,v)\in \mathcal{C}_{par,loc}^{\alpha/2, \alpha} ((0,T) \times \Omega)$. 

\smallskip

Since $u$ is a weak solution of 

\[\partial_t u - \text{div}((\eta_\varepsilon + v^2) \nabla u) = 0,\]
according to parabolic Schauder Theorem (see \cite[Theorem 8.11]{K96}), the function $u$ is in $\mathcal{C}_{par, loc}^{1 +\alpha/2, 2 +\alpha}((0,T)\times \Omega)$ (see \eqref{def : Ckl}), which means that $\nabla \overline{u} \in \mathcal{C}_{par, loc}^{1+ \alpha/2, 1 +\alpha/2}((0,T)\times \Omega)$. Since $v$ is a weak solution of 

\[\partial_t v - \varepsilon \Delta v = \frac{v-1}{4\varepsilon} - v|\nabla u|^2,\]
with a source term in $\mathcal{C}_{par, loc}^{1+ \alpha/2, 1 +\alpha/2}((0,T)\times \Omega)$, according to Schauder Theorem again, the function $v$ is in $\mathcal{C}_{par, loc}^{1 +\alpha/2, 2 +\alpha}((0,T)\times \Omega)$. We can conclude from a classical bootstrap argument that the pair $(u,v)$ belongs to $\mathcal{C}^\infty((0,T) \times \Omega)$. 
\end{proof}

\begin{proof}[Proof of \autoref{inner regularity}]
Let $2R > 0$ and $Q_{2R}(t_0, x_0) \subset \subset (0,T) \times \Omega$ be a parabolic cylinder inside $\Omega$.

\smallskip

According to De Giorgi-Nash-Moser Theorem (see \cite[Theorem 3.2.3]{I26}, \cite[Section IV, Theorem 1.2]{DB93})
%(see \cite[Theorem 2.12]{G18}, \cite[Theorem 3]{GIMV19})
there exists a positive real number $\alpha \in (0,1)$ such that $u \in C_{par, loc}^{\alpha/2, \alpha}((0,T) \times \Omega)$. We now prove that $v \in C_{par, loc}^{\alpha/2, \alpha}((0,T) \times \Omega)$. 
Let $(t_0,x_0) \in (0,T) \times \Omega$ and $R >0$ such that $Q_{2R}(t_0,x_0) \subset (0,T)\times \Omega$. According to \autoref{prop:grad_spat_Morrey}, the function $|\nabla u|^2$ belongs to $\mathcal{L}^{1,d+2\alpha}(Q_{2R}(t_0,x_0))$. By setting 

\[\mu = \frac{1-v}{4\varepsilon} + v |\nabla u|^2,\]
the equation of $v$ in \eqref{eq : eq in v} rewrites as 

\[\partial_t v - \varepsilon \Delta v = \mu \quad \quad \text{in } Q_{2R}(t_0,x_0).\]
We claim that there exists a constant $C_1$ such that for any parabolic cylinder $Q_r(t_1,x_1) \subset Q_{R}(t_0,x_0)$ , 

\begin{equation}\label{eq : campanato inner}
\iint_{Q_r(t_1, x_1)} |v - \{v\}_{r,t_1,x_1}|^2 \leqslant C_1 r^{d+2+2\alpha}.
\end{equation}

Suppose that \eqref{eq : campanato inner} is true. We deduce that for any $Q_r(t_1,x_1) \subset Q_{R/2}(t_0,x_0)$, according to Cauchy-Schwarz inequality, 

\begin{multline*}
\iint_{Q_r(t_1,x_1)} |v - \{v\}_{r,t_1,x_1}| \leqslant |Q_r|^{1/2} \left(\iint_{Q_r(t_1,x_1)}  |v - \{v\}_{r,t_1,x_1}|^2 \right)^{1/2}\\
\leqslant \omega_d ^{1/2} r^{(d+2)/2} \sqrt{C_1} r^{(d+2+2\alpha)/2} \leqslant C(\alpha, d) r^{d+2+\alpha}.
\end{multline*}
According to \cite[Lemma 4.3]{L96}, we conclude that $[v]_{\alpha, Q_R(t_0,x_0)} \leqslant C(\alpha, d)$. Combining with the fact that $v\in L^\infty(Q_R(t_0,x_0))$, we deduce that $v\in C_{par}^{\alpha/2, \alpha}(Q_R(t_0,x_0))$, as desired. 

\smallskip

We now turn to the proof of \eqref{eq : campanato inner}. Fix $t_1, x_1 \in Q_R(t_0,x_0)$, so that $Q_R(t_1, x_1) \subset Q_{2R}(t_0, x_0)$. In order to prove \eqref{eq : campanato inner}, we prove the following intermediate inequality : there exists a constant $C_2$ such that for any $\rho < r \leqslant R$, one has 

\begin{equation}\label{inner star}
\iint_{Q_\rho(t_1, x_1)} |v -\{v\}_{\rho,t_1, x_1}|^2 \leqslant C_2\left[\left(\frac{\rho}{r}\right)^{d+4} \iint_{Q_r (t_1, x_1)} |v - \{v\}_{r,t_1, x_1}|^2 + r^{d+2+2\alpha} \right].
\end{equation} 
Using the iteration lemma \cite[Lemma 4.3]{L96} and the fact that $0\leqslant v\leqslant 1$, we deduce that 
\begin{multline*}
\iint_{Q_r(t_1,x_1)} |v - \{v\}_{r,t_1,x_1}|^2\leqslant C_2 r^{d+2+2\alpha} \left(\frac{1}{R^{d+2+2\alpha}}\iint_{Q_{R}(t_1,x_1)} |v - \{v\}_{2r, t_1, x_1}|^2 + 1\right)\\
\leqslant r^{d+2+2\alpha} \left(\frac{1}{R^{d+2+2\alpha}}4R^{d+2} + C\right),
\end{multline*}
which corresponds to \eqref{eq : campanato inner}. 

\medskip

Fix $0 < \rho < r\leqslant R$. Let $w$ be the unique weak solution in $H^1(Q_{2r}(t_1,x_1))$ of the equation 

\[\left\{\begin{array}{lll}
\partial_t w - \varepsilon \Delta w &= 0, &\text{on } Q_{2r}(t_1, x_1),\\
w =v& & \text{on } \partial_{\sqcup} Q_{2r}(t_1,x_1),\\
\end{array}\right.\]
meaning that for any test function $\varphi\in H_0^1(Q_{2r}(t_1,x_1))$, 
\[\iint_{Q_{2r}(t_1,x_1)} \partial_t w \varphi + \varepsilon \iint_{Q_{2r}(t_1,x_1)} \nabla w\cdot \nabla \varphi =0.\]
According to the maximum principle (see \cite[Lemma 3.15]{L96}), $|w| \leqslant v\leqslant 1$. Moreover, $w$ satisfies the following Campanato estimate (see \cite[Lemma 4.5]{L96}) : there exists $C >0$ such that for any $\rho < r$,  

\begin{equation}\label{campan1}
\iint_{Q_\rho(t_1,x_1)} |w- \{w\}_{\rho , t_1, x_1}|^2 \leqslant C \left(\frac{\rho}{r}\right)^{d+4}\iint_{Q_r(t_1,x_1)} |w- \{w\}_{r , t_1, x_1}|^2.
\end{equation}

Let $v_2= v -w$. Notice that for a.e $t\in (t_1 - (2r)^2, t_1)$, the function $v_2(t,\cdot)$ is zero at $\partial B_{2r}(x_1)$. The function $v_2 \in L^\infty(Q_{2r}(t_1,x_1))\cap L^2((t_1-4r^2,t_1);H_0^1(B_{2r}(x_0))) \cap H^1((t_1-4r^2,t_1);L^2(B_{2r}(x_0)))$ satisfies for any test function $\varphi\in L^2((t_1-4r^2,t_1);H_0^1(B_{2r}(x_0))) \cap H^1((t_1-4r^2,t_1);L^2(B_{2r}(x_0)))$, 
%\begin{equation}\label{camp inner v eq v2}
%\left\{\begin{array}{ll}
%\partial_t v_2 - \varepsilon \Delta v_2 = \mu, &\text{on } Q_{2r}(t_1, x_1)\\
%v_2 =0 & \text{on } \partial_{\sqcup} Q_{2r}(t_1,x_1)\\
%\end{array}\right.
%\end{equation}
\begin{equation}\label{camp inner v eq v2}
\iint_{Q_{2r}(t_1,x_1)}\partial_t v_2 \varphi + \varepsilon \iint_{Q_{2r}(t_1,x_1)} \nabla v_2\cdot \nabla \varphi =\iint_{Q_{2r}(t_1,x_1)}\mu \varphi
\end{equation}
Taking $v_2(t,\cdot)$ as a test function in \eqref{camp inner v eq v2}, we obtain 

\begin{equation}\label{inner camp 1}
\iint_{Q_{2r}(t_1, x_1)} \partial_t v_2 v_2 + \iint_{Q_{2r}(t_1,x_1)} \varepsilon |\nabla v_2|^2 = \iint_{Q_{2r}(t_1,x_1)} \mu v_2.
\end{equation}
Observe that $v_2 (t_1 -4r^2, \cdot) = 0$ on $B_{2r}(x_1)$, hence 
\begin{multline}\label{inner camp 2}
\iint_{Q_{2r}(t_1, x_1)} \partial_t v_2 v_2 = \int_{B_{2r}(x_1)} \int_{t_1 -4r^2}^{t_1} \partial_t v_2 v_2 \\
= \int_{B_{2r}(x_1)} \left[\frac{1}{2} |v_2 (t_1, x)|^2 -\frac{1}{2}|v_2 (t_1 -4r^2, x)|^2\right]dx = \int_{B_{2r}(x_1)} \frac{1}{2} |v_2 (t_1, x)|^2  \geqslant 0.
\end{multline}

On the other hand, using that $|v_2| = |v -w|\leqslant 2$,

\begin{equation}\label{inner camp 3}
\iint_{Q_{2r}(t_1,x_1)} \mu v_2 \leqslant 2\iint_{Q_{2r}(t_1,x_1)} |\mu| \leqslant C\|\mu\|_{\mathcal{L}^{1, d+2\alpha}} r^{d+2\alpha}.
\end{equation}  
Injecting \eqref{inner camp 2} and \eqref{inner camp 3} into \eqref{inner camp 1} yields

\begin{equation}\label{inner camp 4}
\iint_{Q_{2r}(t_1,x_1)} |\nabla v_2|^2 \leqslant \frac{1}{\varepsilon} C\|\mu\|_{\mathcal{L}^{1, d+2\alpha}} r^{d+2\alpha}.
\end{equation}

Now for any $\rho < r$, using twice that $(a+b)^2 \leqslant 2(a^2 +b^2)$, twice \autoref{prop : Poincare}, \eqref{campan1} and \eqref{inner camp 4}, there exist constants $C_P, C>0$ such that

\begin{align*}
&\iint_{Q_{\rho}(t_1,x_1)} |v -\{v\}_{\rho, t_1, x_1}|^2\\
&\leqslant 2\iint_{Q_{\rho}(t_1,x_1)} |w -\{w\}_{\rho, t_1, x_1}|^2 + 2 \iint_{Q_{\rho}(t_1,x_1)} |v_2 -\{v_2\}_{\rho, t_1, x_1}|^2\\
&\leqslant  2C\left(\frac{\rho}{r}\right)^{d+4}\iint_{Q_{r}(t_1,x_1)} |w -\{w\}_{r, t_1, x_1}|^2 + 2C_P\rho^2 \iint_{Q_{2\rho}(t_1,x_1)} |\nabla v_2|^2 + 2C_P\rho^{d+2\alpha}\\
&\leqslant 4C\left(\frac{\rho}{r}\right)^{d+4}\iint_{Q_{r}(t_1,x_1)} |v -\{v\}_{r, t_1, x_1}|^2 + 4 C\left(\frac{\rho}{r}\right)^{d+4}\iint_{Q_{r}(t_1,x_1)} |v_2 -\{v_2\}_{r, t_1, x_1}|^2 \\
&\quad + 2C_Pr^2 \iint_{Q_{2r}(t_1,x_1)} |\nabla v_2|^2 + 2C_Pr^{d+2+2\alpha} \\
&\leqslant 4C\left(\frac{\rho}{r}\right)^{d+4}\iint_{Q_{r}(t_1,x_1)} |v -\{v\}_{r, t_1, x_1}|^2  + 4C_P r^2 \iint_{Q_{2r}(t_1,x_1)} |\nabla v_2|^2\\
&\quad +2C_Pr^2 \iint_{Q_{2r}(t_1,x_1)} |\nabla v_2|^2 + 2C_Pr^{d+2+2\alpha}\\
&\leqslant 4C\left(\frac{\rho}{r}\right)^{d+4}\iint_{Q_{r}(t_1,x_1)} |v -\{v\}_{r, t_1, x_1}|^2 + Cr^{d+2+2\alpha}.
\end{align*}
which proves \eqref{inner star} and concludes the proof of \autoref{inner regularity}. 
\end{proof}

\subsection{Regularity at the boundary}

In this section, we prove the global regularity property, which corresponds to the second part of \autoref{Theorem 2}.

\subsubsection{Description of the extension in time}

We start by extending the pair $(u,v)$ on $(-1,+\infty) \times \Omega$ by the time-constant pair $(u_0,v_0)$ on $(-1,0]$. We consider $(\overline{u}, \overline{v})$ the extension of the pair $(u,v)$ on the negative time, defined by

\begin{equation}\label{extension u,v def}
(\overline{u}(t,x), \overline{v}(t,x)) = \left\{\begin{array}{ll}
(u(t-1,x),v(t-1,x)) & \text{if } t >1\\
(u_0(x), v_0(x)) & \text{if } t\leqslant 1.\\
\end{array}
\right.
\end{equation} 
The pair $(\overline{u}, \overline{v})\in \mathcal{F}$ satisfies the following system : 

\begin{equation}\label{extension uv eq}
\left\{\begin{array}{ll}
\partial_t \overline{u} - \text{div}((\eta_\varepsilon + \overline{v}^2) \nabla \overline{u}) = - \text{div}((\eta_\varepsilon + v_0^2) \nabla u_0) 1_{\{t\leqslant 1\}}, &\text{in } \mathcal{D}'((0,\infty)\times\Omega)\\
\partial_t \overline{v} - \varepsilon \Delta \overline{v} + \frac{\overline{v}-1}{4\varepsilon} + \overline{v}|\nabla \overline{u}|^2 = (- \varepsilon \Delta v_0 + \frac{v_0-1}{4\varepsilon} + v_0|\nabla u_0|^2)1_{\{t\leqslant 1\}},  &\text{in } \mathcal{D}'((0,\infty)\times \Omega)\\
\overline{v}(t,x) = 1 & \text{on } (0,\infty) \times \partial\Omega \\
\overline{u}(t,x) = g(t-1,x)1_{\{t > 1\}} + g(0,x) 1_{\{t\leqslant 1\}} & \text{on } (0,\infty) \times \partial\Omega\\
\end{array}\right. 
\end{equation}

We will adopt the following notations : 

\begin{equation}\label{def U0 V0}
U_0 = - \text{div}((\eta_\varepsilon + v_0^2) \nabla u_0) 1_{\{t\leqslant 1\}}, \quad V_0 = \left(- \varepsilon \Delta v_0 + \frac{v_0-1}{4\varepsilon} + v_0|\nabla u_0|^2\right)1_{\{t\leqslant 1\}}.
\end{equation}

Notice that if we suppose that $(u_0, v_0) \in \mathcal{C}^{1,1}(\Omega)$, then $U_0, V_0\in L^\infty((0,+\infty) \times \Omega)$. 

\subsubsection{Description of the reflexion method}

We extend the pair $(\overline{u}, \overline{v})$ on a neighbourhood of $\Omega$. Studying the regularity of $(\overline{u}, \overline{v})$ at the boundary $\partial\Omega$ then amounts to study the inner regularity of the extended pair. The construction of this extension relies on a reflection method across $\partial \Omega$ presented in \cite{S06}. This is the same method as in the proof of \cite[Theorem 3.2]{BMRa23}, to show the regularity up to the boundary in the elliptic case. In the next paragraph, we describe the method and define the variables $(\hat{u}, \hat{v})$ which corresponds to the extension of $(\overline{u},\overline{v})$. 

\smallskip

There exists a $\delta >0$ such that the projection $\pi_\Omega$ on $\partial \Omega$ is well defined and smooth (of class $C^{1,1}$) on the tubular neighbourhood $\{d(x,\partial\Omega) < \delta\}$.
%Using a covering argument, we obtain a positive number $\delta > 0$ such that the projection $\pi_\Omega$ is well defined and of class $C^{1,1}(\Omega)$. 
We define 

\[\left\{\begin{array}{l}
U := \{x\in \mathbb{R}^d, d(x,\partial \Omega) < \delta \},\\
U^{in} := \Omega \cap U,\\
U^{ex} := U \setminus \overline{\Omega}.\\
\end{array}\right.\]  

We define $\sigma$ the geosdesic reflection across $\partial \Omega \cap U$, which satisfies $\sigma_\Omega (x) := 2 \pi_\Omega (x) - x$ for all $x\in U_\delta$. 
The function $\sigma$ is an involutive $C^{1,1}$ diffeomorphism onto its image and satisfies $\sigma(x) = x$ for $x\in \partial \Omega$. 

%Up to reducing $\delta$, we can suppose that 

%\[\sigma(U_{\delta}^{in}) \subset U_\delta^{ex}), \quad  \text{and} \quad \sigma(U_{\delta}^{ex}) \subset U_\delta^{in}.\]

Now consider $\widetilde{\Omega} =U\cup \sigma(U)$,
%\[\widetilde{\Omega} := (U_\delta \cap B_{R/4}(x_0)) \cup \sigma ((U_\delta \cap B_{R/4}(x_0)) \subset B_{R/2}(x_0)\cap U_\delta,\]
so that $\sigma$ satisfies $\sigma (\widetilde{\Omega})= \widetilde{\Omega}$ with $\sigma(\widetilde{\Omega} \cap \Omega) = \widetilde{\Omega} \setminus \overline{\Omega}$ and $\sigma(\widetilde{\Omega} \setminus \overline{\Omega}) = \widetilde{\Omega} \cap \Omega$. 
The equality $\sigma \circ \sigma (x) = x$ yields $D\sigma (x) D\sigma (\sigma (x)) = I_d$, so $[D\sigma (x)]^{-1} = D\sigma (\sigma (x))$ for all $x\in \widetilde{\Omega}$.   
On the other hand, for $x\in \partial\Omega$, the matrix $D\sigma (x)$ is the reflection across the hyperplane $T_x (\partial \Omega)$, so $[D\sigma (x)]^T =  D\sigma (x) = D\sigma (x)^{-1}$ for $x\in \partial\Omega \cap \widetilde{\Omega}$. 

\smallskip

For all $x\in \overline{\Omega}$, let 

\[j(x) := \left\{ \begin{array}{ll}
1 & \text{ if } x\in \widetilde{\Omega} \cap \Omega \\
|\det D \sigma (x) | & \text{ if } x\in \widetilde{\Omega} \setminus \Omega,\\
\end{array} \right.\]
and 
\[M(x) = \left\{ \begin{array}{ll}
1 & \text{ if } x\in \widetilde{\Omega} \cap \Omega \\
j(x) [D\sigma (\sigma (x))]^T D\sigma (\sigma (x)) & \text{ if } x\in \widetilde{\Omega} \setminus \Omega,\\
\end{array}\right.\]
Note that $j$ and $M$ are functions of class $\mathcal{C}^{1,1}$ on $\widetilde{\Omega}$. Moreover, provided $\delta >0$ is small enough, there exists $\lambda >0$ such that 

\begin{equation}\label{hypA}
M(x) \geqslant \lambda I_d \quad \quad \text{for all } x\in \widetilde{\Omega}.
\end{equation}
This extension in space being done, we define the extended pairs $(\hat{u}_\varepsilon , \hat{v}_\varepsilon) $ and $(\widetilde{u}_\varepsilon , \widetilde{v}_\varepsilon )$ on $(0,\infty)\times \widetilde{\Omega}$ : 

\begin{equation}\label{def : u chapeau}
(\hat{u}(t,x),\hat{v}(t,x))  := \left\{ \begin{array}{ll}
(\overline{u}(t,x), \overline{v}(t,x))  & \text{ if } x\in \widetilde{\Omega} \cap \Omega \\
(\overline{u} (t,\sigma (x)), \overline{v} (t,\sigma(x)) & \text{ if } x\in \widetilde{\Omega} \setminus \Omega,\\
\end{array} \right.
\end{equation}
%\quad \text{and} \quad  = \left\{ \begin{array}{ll}
%v_\varepsilon (t,x) & \text{ if } x\in \widetilde{\Omega} \cap \Omega \\
%v_\varepsilon (t,\sigma(x)) & \text{ if } x\in \widetilde{\Omega} \setminus \Omega,\\
%\end{array}\right..
and 
\begin{equation}\label{def : u tilde}
(\widetilde{u}(t,x), \widetilde{v}(t,x)) := \left\{ \begin{array}{ll}
(\overline{u} (t,x), \overline{v}(t,x)) & \text{ if } x\in \widetilde{\Omega} \cap \Omega \\
(2 \overline{g}(t,\pi(x))- \overline{u} (t,\sigma (x)), 2-\overline{v}(t,\sigma(x))) & \text{ if } x\in \widetilde{\Omega} \setminus \Omega,\\
\end{array} \right.
\end{equation}
The pairs $(\hat{u}, \hat{v})$ and $(\widetilde{u}, \widetilde{v})$ belong to $\mathcal{F}$. 
Finally, we set $\widetilde{g}(t,x) := \overline{g}(t, \pi(x)) \in \mathcal{C}^{1,1}((0,T) \times \widetilde{\Omega})$. 

\smallskip

The following lemma establishes the equations satisfied by the pairs $(\hat{u} , \hat{v})$ and $(\widetilde{u} , \widetilde{v} )$. 

\medskip

\begin{lemma}
Recall the definition \eqref{def U0 V0} of $(U_0, V_0)$
The functions $\widetilde{u}, \widetilde{v}, \hat{u}, \hat{v}$ as defined in \eqref{def : u chapeau} and \eqref{def : u tilde} satisfy 

\begin{multline}\label{boundary : equ}
j\, \partial_t \widetilde{u} - div((\eta_\varepsilon + \hat{v}^2_\varepsilon ) M\nabla \widetilde{u} )\\
 = 2j\, \partial_t \widetilde{g} 1_{\widetilde{\Omega}\setminus \overline{\Omega}} - 2 div(1_{\widetilde{\Omega}\setminus \overline{\Omega}} (\eta_\varepsilon + \hat{v}^2) M \nabla \widetilde{g})+ \frac{j}{2}(U_0 1_{\widetilde{\Omega}\cap \Omega} - U_0\circ \sigma  1_{\widetilde{\Omega}\setminus \Omega}) \quad \text{in } \mathcal{D}'((0,\infty) \times \widetilde{\Omega}),
\end{multline}
%\text{in } \mathcal{D}'((0,+\infty)\times \widetilde{\Omega})

\begin{multline}\label{boundary : eqv}
j\, \partial_t \widetilde{v} - \text{div}(M\nabla \widetilde{v})\\
 =(1_{\widetilde{\Omega} \cap \Omega} - 1_{\widetilde{\Omega}\setminus \overline{\Omega}}) \left( \frac{j}{4\varepsilon} (1-\hat{v}) - (M \widetilde{v} \nabla \widetilde{u} \cdot \nabla \widetilde{u})\right) + \frac{j}{2}(V_0 1_{\widetilde{\Omega}\cap \Omega} - V_0\circ \sigma  1_{\widetilde{\Omega}\setminus \Omega}) \quad \text{in } \mathcal{D}'((0,\infty) \times \widetilde{\Omega}).
\end{multline}
%\text{in } \mathcal{D}'((0,+\infty)\times \widetilde{\Omega}).
\end{lemma}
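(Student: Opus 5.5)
We prove both identities by the change of variables $y=\sigma(x)$ in the weak formulation. Fix a test function $\varphi\in\mathcal{C}_c^\infty((0,\infty)\times\widetilde{\Omega})$ and split every integral over $\widetilde{\Omega}$ into $\widetilde{\Omega}\cap\Omega$ and $\widetilde{\Omega}\setminus\overline{\Omega}$; $\partial\Omega$ has measure zero.

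On the exterior piece we substitute $x=\sigma(y)$ with $y\in\Omega$. Then $dx=|\det D\sigma(y)|\,dy$, and since $\sigma$ is an involution, $j(\sigma(y))|\det D\sigma(y)|=1$. The chain rule gives $\nabla(\overline{u}\circ\sigma)(x)=D\sigma(x)^T\nabla\overline{u}(\sigma(x))$. The definition of $M$ is designed for this: $\int_{\widetilde{\Omega}\setminus\overline\Omega} a\, M\nabla(f\circ\sigma)\cdot\nabla(\varphi)$ becomes $\int_{\Omega} a\circ\sigma\,\nabla f\cdot\nabla(\varphi\circ\sigma)$. Likewise $j\,\partial_t$ terms pull back to plain $\partial_t$ terms.

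Each equation then reduces to the weak equation of $(\overline u,\overline v)$ in $\Omega$, tested against
\[\psi:=\varphi_{|\Omega}\pm\varphi\circ\sigma_{|\Omega},\]
with the sign matching the symmetry of the extension. For $\widetilde{v}$ we use $\widetilde v=2-\overline v\circ\sigma$ outside, so $\nabla\widetilde v=-D\sigma^T\nabla\overline v\circ\sigma$ and $\partial_t\widetilde v=-\partial_t\overline v\circ\sigma$. This makes the relevant test function $\psi=\varphi-\varphi\circ\sigma$ on $\Omega$. Since $\sigma=\mathrm{id}$ on $\partial\Omega$, $\psi$ vanishes on $\partial\Omega$ and lies in $H^1_0$. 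It is therefore admissible in \eqref{extension uv eq} once the class of test functions is extended by density, as the paper does after \eqref{EL approché t fixe4}.

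The zero-order and gradient-squared terms of the $v$ equation appear with opposite signs in the two regions, which gives the factor $1_{\widetilde\Omega\cap\Omega}-1_{\widetilde\Omega\setminus\overline\Omega}$. The term $\overline v|\nabla\overline u|^2$ pulls back to $M\widetilde v\nabla\widetilde u\cdot\nabla\widetilde u$ up to the $\hat{v}$ versus $\widetilde{v}$ bookkeeping, which must be checked: $|\nabla u|^2$ transforms via $M$. The source term $V_0$ is handled in the same way.

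For $\widetilde{u}=2\widetilde g-\overline u\circ\sigma$ we first write $\widetilde u=-\hat u+2\widetilde g$ on the exterior. The $\hat u$ part gives, as before, the $u$ equation tested with $\varphi-\varphi\circ\sigma$. The $2\widetilde g$ part produces exactly the terms $2j\partial_t\widetilde g\,1_{\widetilde\Omega\setminus\overline\Omega}$ and $-2\,\mathrm{div}(1_{\widetilde\Omega\setminus\overline\Omega}(\eta_\varepsilon+\hat v^2)M\nabla\widetilde g)$, moved to the right-hand side. The boundary trace matching $\overline u=\overline g$ on $\partial\Omega$ ensures that $\widetilde u$ has no jump across $\partial\Omega$. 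Hence $\widetilde u\in H^1_{loc}$ and no singular measure on $\partial\Omega$ appears in the distributional divergence. The same holds for $\widetilde v$, since $\overline v=1$ there.

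The main obstacle is to verify precisely the constants and signs attached to $U_0$ and $V_0$, including the factor $j/2$, and the consistency of $\hat{v}$ versus $\widetilde{v}$ in the coefficients. We would do this by carefully tracking the $1_{\{t\le1\}}$ source under the pullback with Jacobian $j$. A further point to justify is that $\varphi-\varphi\circ\sigma$, which is only $\mathcal{C}^{1,1}$ and not smooth, is an admissible test function; this follows from the density remark in the paper.
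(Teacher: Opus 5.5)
Your route is the paper's in different packaging. The paper splits $\varphi=\varphi^s+\varphi^a$, notes that for $\varphi^s$ the interior integral cancels identically against the pulled-back exterior one, and invokes \eqref{extension uv eq} only for $\varphi^a$. Your test function $\varphi-\varphi\circ\sigma$ on $\Omega$ is exactly $2\varphi^a$. Several of your steps are fine as they stand:
the change of variables;
the handling of the $2\widetilde g$ part;
the remark that $\widetilde u,\widetilde v$ have no jump across $\partial\Omega$;
the admissibility of $\varphi-\varphi\circ\sigma$, which vanishes on $\partial\Omega$ and, since $\mathrm{supp}\,\varphi$ is compact in $\widetilde\Omega$, near the inner edge of the collar.

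The gap is the step you defer as ``the main obstacle''. It is not bookkeeping, and it cannot be closed, because the stated identities are wrong exactly there. Finishing your own computation, the source term is
\[
\int_{\widetilde\Omega\cap\Omega}U_0\,(\varphi-\varphi\circ\sigma)=\int_{\widetilde\Omega}j\,\big(U_0 1_{\widetilde\Omega\cap\Omega}-U_0\circ\sigma\,1_{\widetilde\Omega\setminus\Omega}\big)\varphi ,
\]
so the coefficient is $j$, not $j/2$, and the same holds for $V_0$. A test function supported in $(0,\infty)\times(\widetilde\Omega\cap\Omega)$ confirms this. There the left side of \eqref{boundary : equ} is $\partial_t\overline u-\mathrm{div}((\eta_\varepsilon+\overline v^2)\nabla\overline u)=U_0$, while the right side is $U_0/2$.

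The quadratic term fails in the same way. On $\widetilde\Omega\setminus\overline\Omega$ the pull-back gives $j\,(\overline v|\nabla\overline u|^2)\circ\sigma=\hat v\,M\nabla\hat u\cdot\nabla\hat u$ with $\hat u=\overline u\circ\sigma$. This is not $\widetilde v\,M\nabla\widetilde u\cdot\nabla\widetilde u$, since there $\widetilde v=2-\hat v$ and $\nabla\widetilde u=2\nabla\widetilde g-\nabla\hat u$. In addition, the $v$-equation needs $\varepsilon\,\mathrm{div}(M\nabla\widetilde v)$ on the left.

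What your argument actually proves is \eqref{boundary : equ}--\eqref{boundary : eqv} with three replacements: $\frac{j}{2}$ by $j$; $\mathrm{div}(M\nabla\widetilde v)$ by $\varepsilon\,\mathrm{div}(M\nabla\widetilde v)$; and $\widetilde v\,M\nabla\widetilde u\cdot\nabla\widetilde u$ by $\hat v\,M\nabla\hat u\cdot\nabla\hat u$. The paper's proof reaches $\frac{j}{2}$ only by writing ``$+0$'' for the interior integral in \eqref{boundary equ 5}. By \eqref{extension uv eq}, that integral equals $\int U_0\varphi^a$.

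The corrected identities serve the later argument just as well. $\widetilde U_0$ and $\widetilde V_0$ stay bounded, and $|\nabla\hat u|\leqslant|\nabla\widetilde u|+2|\nabla\widetilde g|$ keeps $\mu$ in $\mathcal{L}^{1,d+2\alpha}$.
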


\begin{proof}
Let $\varphi \in \mathcal{C}_c^\infty ((0,\infty)\times \widetilde{\Omega})$. We decompose $\varphi = \varphi ^s +\varphi^a$ as the sum of its symmetric and anti-symmetric part, with 

\[\varphi^s(x) = \frac{1}{2}( \varphi(x) + \varphi (\sigma (x))), \quad \text{ and } \quad \varphi^a(x) = \frac{1}{2}( \varphi(x) - \varphi (\sigma (x))).\]
The functions $\varphi^s$ and $\varphi ^a$ belong to $\mathcal{C}^{1,1}((0,\infty)\times \Omega))$ and satisfy $\varphi ^s(t,\sigma(x)) = \varphi ^s(t,x)$ and $\varphi ^a (t, \sigma(x)) = - \varphi ^a(t,x)$ for all $(t,x) \in (0,\infty) \times \widetilde{\Omega}$. 

\medskip

\underline{Proof of \eqref{boundary : equ} :} In the expression   

\[\int_0^\infty \int_{\widetilde{\Omega}} j \partial_t \widetilde{u} \varphi + \int_{\widetilde{\Omega}} (\eta_\varepsilon + \hat{v}^2 )M \nabla \widetilde{u} \cdot \nabla \varphi,\]
we consider separately the terms with $\varphi^s$ and the terms with $\varphi^a$ : 
\begin{multline}\label{boundary equ 1}
\int_0^\infty\int_{\widetilde{\Omega}\setminus \overline{\Omega}} j \partial_t \widetilde{u}\: \varphi^s  + \int_0^\infty \int_{\widetilde{\Omega}\setminus \overline{\Omega}} (\eta_\varepsilon + \hat{v}^2) M\nabla \widetilde{u} \cdot \nabla \varphi^s \\
= 2 \int_0^\infty \int_{\widetilde{\Omega}\setminus \overline{\Omega}} \left[j\partial_t \widetilde{g} \varphi^s +(\eta_\varepsilon + \hat{v}^2)M\nabla \widetilde{g} \cdot \nabla \varphi ^s\right]\\
- \int_{\widetilde{\Omega}\setminus \overline{\Omega}} \left[j\partial_t (\overline{u}\, \circ \, \sigma) \varphi^s + (\eta_\varepsilon + \hat{v}^2)M\nabla (\overline{u}\, \circ\, \sigma) \cdot \nabla \varphi ^s\right].
\end{multline}
Using $\varphi^s \circ \sigma = \varphi^s$ and changing the variable : 

\begin{multline}\label{boundary equ 2}
\int_0^\infty\int_{\widetilde{\Omega}\setminus \overline{\Omega}}\left[ j(\partial_t\overline{u}\, \circ \,\sigma)(\varphi^s \circ\, \sigma) + (\eta_\varepsilon + \hat{v}^2)M\nabla (\overline{u}\, \circ \,\sigma) \cdot \nabla (\varphi ^s \circ \,\sigma)\right]\\
= \int_{\widetilde{\Omega}\cap \Omega} \left[\partial_t \overline{u} \varphi^s + (\eta_\varepsilon + \overline{v}^2) \nabla \overline{u} \cdot \nabla \varphi^s\right].
\end{multline}
From \eqref{boundary equ 1} and \eqref{boundary equ 2}, we deduce 

\begin{equation}\label{boundary equ 3}
\int_0^\infty \int_{\widetilde{\Omega}}\left[ j \partial_t \widetilde{u} \,\varphi^s + (\eta_\varepsilon + \hat{v}^2) M \nabla \widetilde{u} \cdot \nabla \varphi^s\right] =  2\int_0^\infty \int_{\widetilde{\Omega}\setminus \overline{\Omega}}\left[ j\partial_t \widetilde{g} \,\varphi^s +(\eta_\varepsilon + \hat{v}^2)M\nabla \widetilde{g} \cdot \nabla \varphi ^s\right].
\end{equation}
We now focus on the part with $\varphi^a$. Notice that $\varphi^a =0$ on $(0,\infty) \times \partial\Omega$, so that  

\begin{multline}\label{boundary equ 4}
\int_0^\infty\int_{\widetilde{\Omega}\setminus \overline{\Omega}} j \partial_t \widetilde{u}\, \varphi^a  + \int_0^\infty\int_{\widetilde{\Omega}\setminus \overline{\Omega}} (\eta_\varepsilon + \hat{v}^2 ) M\nabla \widetilde{u} \cdot \nabla \varphi^a \\
= 2\int_0^\infty \int_{\widetilde{\Omega}\setminus \overline{\Omega}} \left[ j\partial_t \widetilde{g} \varphi^a +(\eta_\varepsilon + \hat{v}^2)M\nabla \widetilde{g} \cdot \nabla \varphi ^a \right]\\
 - \int_0^\infty\int_{\widetilde{\Omega}\setminus \overline{\Omega}} \left[ j\partial_t (u\, \circ \,\sigma) \varphi^a + (\eta_\varepsilon + \hat{v}^2)M\nabla (u\, \circ \,\sigma) \cdot \nabla \varphi ^a \right].
\end{multline}
Using that $\varphi^a \circ \sigma = -\varphi^a$ and changing the variable : 

\begin{align*}
&\int_0^\infty\int_{\widetilde{\Omega}\setminus \overline{\Omega}} \left[j\partial_t (u\, \circ \,\sigma) \varphi^a + (\eta_\varepsilon + \hat{v}^2)M\nabla (u\, \circ \,\sigma) \cdot \nabla \varphi ^a\right]\\
&= -\int_0^\infty\int_{\widetilde{\Omega}\setminus \overline{\Omega}}\left[ j\partial_t (u\, \circ \,\sigma) (\varphi^a \circ \sigma) + (\eta_\varepsilon + \hat{v}^2)M\nabla (u\, \circ \,\sigma) \cdot \nabla (\varphi ^a \circ \sigma)\right]\\
&= -\int_0^\infty\int_{\widetilde{\Omega}\cap \Omega}\left[ \partial_t \overline{u} \varphi^a + (\eta_\varepsilon + \overline{v}^2) \nabla \overline{u} \cdot \nabla \varphi^a\right] \\
&=-\int_0^\infty\int_{\widetilde{\Omega}\cap \Omega} U_0 \varphi^a,\\
\end{align*}
where we used \eqref{extension uv eq} in the last equality. 
%&=- \int_{\widetilde{\Omega}\setminus \overline{\Omega}}\left[ j(x)\partial_t u (\sigma) \varphi^a (\sigma) + (\eta_\varepsilon + \hat{v}^2)\nabla u (\sigma) \cdot \nabla \varphi ^a (\sigma) \, j(x)\right]\\
We deduce from \eqref{boundary equ 4} that 

\begin{align}\label{boundary equ 5}
&\int_0^\infty\int_{\widetilde{\Omega}} j\, \partial_t \widetilde{u}\, \varphi^a + (\eta_\varepsilon + \hat{v}^2) M \nabla \tilde{u} \cdot \nabla \varphi^a \nonumber\\
&= \int_0^\infty\int_{\widetilde{\Omega}\setminus \overline{\Omega}} j\, \partial_t \widetilde{u} \varphi^a  + \int_0^\infty\int_{\widetilde{\Omega}\setminus \overline{\Omega}} (\eta_\varepsilon + \hat{v}^2_\varepsilon ) M\nabla \widetilde{u} \cdot \nabla \varphi^a \nonumber\\
&\quad +\int_0^\infty\int_{\widetilde{\Omega}\cap \Omega} \left[\partial_t \overline{u} \varphi^a + (\eta_\varepsilon + \overline{v}^2) \nabla u \cdot \nabla \varphi^a \right] \nonumber\\
&= 2\int_0^\infty\int_{\widetilde{\Omega}\setminus \overline{\Omega}}\left[ j\,\partial_t \widetilde{g} \,\varphi^a +(\eta_\varepsilon + \hat{v}^2)M\nabla \widetilde{g} \cdot \nabla \varphi ^a\right] + \int_0^\infty\int_{\widetilde{\Omega} \cap \Omega} U_0 \varphi^a + 0. \nonumber\\
\end{align}
We observe that
\begin{multline}\label{eq U0nouveau}
\int_0^\infty\int_{\widetilde{\Omega}\cap \Omega} U_0 \varphi^a = \frac{1}{2}\int_0^\infty \int_{\widetilde{\Omega}\cap \Omega} U_0(\varphi - \varphi\circ \sigma)\\
= \frac{1}{2}\int_0^\infty\int_{\widetilde{\Omega}\cap \Omega} U_0\varphi - \frac{1}{2}\int_0^\infty\int_{\widetilde{\Omega}\setminus \Omega} U_0\circ \sigma \varphi j = \int_0^\infty \int_{\widetilde{\Omega}} \frac{j}{2}(U_0 1_{\widetilde{\Omega}\cap \Omega} - U_0\circ \sigma  1_{\widetilde{\Omega}\setminus \Omega})\varphi.
\end{multline}

Combining \eqref{boundary equ 3},  \eqref{boundary equ 5} and \eqref{eq U0nouveau} , we obtain \eqref{boundary : equ}. 
The same computation holds for \eqref{boundary : eqv}. 
\end{proof}

In the sequel, we rewrite \eqref{boundary : equ} as 

\begin{multline}\label{boun equ vrai}
\partial_t \widetilde{u} - \text{div}\left(\frac{(\eta_\varepsilon + \widetilde{v}^2)}{j}M \nabla \widetilde{u}\right) - (\eta_\varepsilon + \widetilde{v}^2) M \frac{\nabla j}{j^2}\cdot \nabla \widetilde{u} \\
= 2 \partial_t \widetilde{g}\,1_{\widetilde{\Omega}\setminus \overline{\Omega}}\, - \frac{2}{j} \text{div}(1_{\widetilde{\Omega}\setminus \overline{\Omega}} (\eta_\varepsilon + \widetilde{v}^2) M \nabla \widetilde{g} ) + \widetilde{U}_0
\end{multline}
and \eqref{boundary : eqv} as 

\begin{equation}\label{boun eqv vrai}
\partial_t \widetilde{v} - \text{div}\left(H \nabla \widetilde{v}\right) - J\cdot \nabla \widetilde{v} = \mu
\end{equation}
where, keeping in mind the definition \eqref{def U0 V0} of $(U_0,V_0)$, we set

\begin{equation}\label{def U0tilde}
\widetilde{U}_0 = \frac{1}{2}U_0 1_{\widetilde{\Omega}\cap \Omega} - \frac{1}{2} U_0\circ \sigma 1_{\widetilde{\Omega}\setminus \overline{\Omega}}, \quad \widetilde{V}_0 = \frac{1}{2}V_0 1_{\widetilde{\Omega}\cap \Omega} - \frac{1}{2} V_0\circ \sigma 1_{\widetilde{\Omega}\setminus \overline{\Omega}}
\end{equation}

\begin{equation}\label{def mu}
H =  \frac{M}{j}, \quad J =  M\frac{\nabla j}{j^2} , \quad \mu = (1_{\widetilde{\Omega} \cap \Omega} - 1_{\widetilde{\Omega} \setminus \overline{\Omega}}) \left(\frac{1-\widetilde{v}}{4\varepsilon} - \frac{1}{j}(M\widetilde{v}\nabla \widetilde{u} \cdot \widetilde{u})\right) + \widetilde{V}_0.
\end{equation}

As $\partial\Omega$ is of class $\mathcal{C}^{2,1}$, for any $x_0\in \widetilde{\Omega}$ and $R >0$ such that $B_{2R}(x_0) \subset \widetilde{\Omega}$, the functions $H$ and $J$ are in $\mathcal{C}^0(B_{2R}(x_0))$. In particular, we can introduce positive constants $c_j, C_j$ and $\lambda$ so that $C_j >j(x) \geqslant c_j > 0$ and $H(x)\geqslant \lambda I_d >0$ for all $x\in B_{2R}(x_0)$. 
 
\subsubsection{Proof of the regularity}

The aim of this subsection is to prove the following proposition. 

\begin{proposition}\label{prop : uv holder}
Let $(t_0,x_0) \in (0,+\infty)\times \widetilde{\Omega}$ and $Q_{2R}(t_0,x_0) \subset\subset (0, +\infty)\times \widetilde{\Omega}$. 
There exists $\alpha \in (0,1)$ and a constant $C$ depending only on $d(x_0, \partial \widetilde{\Omega}), \eta_\varepsilon, \alpha$ such that $(\widetilde{u}, \widetilde{v})\in \mathcal{C}^{\alpha/2,\alpha}_{par}(Q_R(t_0,x_0))$. 
\end{proposition}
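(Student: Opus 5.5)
The plan is to replay the proof of \autoref{inner regularity} on the extended domain, using the two equations \eqref{boun equ vrai} and \eqref{boun eqv vrai} in place of the original system. The general forms of \autoref{prop:grad_spat_Morrey} and \autoref{prop : Poincare} were chosen to cover exactly these equations.

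\textbf{Hölder continuity of $\widetilde{u}$.} First I check that \eqref{boun equ vrai} is a linear uniformly parabolic equation for $\widetilde{u}$. The coefficient matrix is $A=(\eta_\varepsilon+\widetilde{v}^2)M/j$. It is bounded, and it is bounded below by $\eta_\varepsilon\lambda/C_j$, thanks to \eqref{hypA}, to $c_j\leqslant j\leqslant C_j$, and to $0\leqslant \widetilde{v}\leqslant 2$ (from \eqref{eq : principe maximum} and the definition of $\widetilde{v}$). The drift $B=-(\eta_\varepsilon+\widetilde{v}^2)M\nabla j/j^2$ is bounded.

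On the right-hand side, $2\partial_t\widetilde{g}\,1_{\widetilde{\Omega}\setminus\overline{\Omega}}$ is bounded, by (iii) with $k\geqslant 2$, or by $g\in\mathcal{C}^{1,1}$ in general. The term $\widetilde{U}_0$ is in $L^\infty$ because $(u_0,v_0)\in\mathcal{C}^{1,1}$. The divergence term has the form $h_1\,\mathrm{div}(h_2)$ with $h_1=2/j\in\mathcal{C}^1$ and $h_2=1_{\widetilde{\Omega}\setminus\overline{\Omega}}(\eta_\varepsilon+\widetilde{v}^2)M\nabla\widetilde{g}\in L^\infty$.

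Bounded drifts and divergence-form sources with $L^\infty$ fields are allowed in the De Giorgi--Nash--Moser theorem for equations with lower-order terms (\cite{DB93}, \cite{L96}). It gives some $\alpha_0\in(0,1)$ with $\widetilde{u}\in\mathcal{C}^{\alpha_0/2,\alpha_0}_{par}(Q_{2R})$. Since $\widetilde{u}$ is bounded by the maximum principle, \autoref{prop:grad_spat_Morrey} then applies to $f=\widetilde{u}$ with these choices of $A,B,h,h_1,h_2$. It yields $|\nabla\widetilde{u}|^2\in\mathcal{L}^{1,d+2\alpha_0}(Q_{3R/2})$.

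\textbf{Hölder continuity of $\widetilde{v}$.} In \eqref{boun eqv vrai} the source $\mu$ consists of three pieces: $(1-\widetilde{v})/4\varepsilon$, which is bounded; $\widetilde{V}_0$, which is bounded; and $j^{-1}M\widetilde{v}\nabla\widetilde{u}\cdot\nabla\widetilde{u}$, which is controlled by $C|\nabla\widetilde{u}|^2$. Hence $\mu\in\mathcal{L}^{1,d+2\alpha_0}$.

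Fix $Q_r(t_1,x_1)\subset Q_R$. Let $w$ solve the frozen-coefficient problem $\partial_t w-\mathrm{div}(H(x_1)\nabla w)=0$ in $Q_{2r}(t_1,x_1)$ with $w=\widetilde{v}$ on $\partial_\sqcup Q_{2r}$. Freezing $H$ at $x_1$ is what makes the Campanato decay \eqref{campan1} with exponent $d+4$ available from \cite[Lemma 4.5]{L96}.

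The error $v_2=\widetilde{v}-w$ then solves
\[\partial_t v_2-\mathrm{div}(H\nabla v_2)-J\cdot\nabla v_2=\mu+\mathrm{div}\big((H-H(x_1))\nabla w\big)+J\cdot\nabla w.\]
This is the structure anticipated in \autoref{prop : Poincare}: take $g-g(x_1)$ to be $H-H(x_1)$ (Lipschitz, since $\partial\Omega\in\mathcal{C}^{2,1}$), $\mu_1=\mu_2=\nabla w$, and $h=J$. The hypothesis \eqref{cond mui} follows from interior gradient bounds for $w$, namely $\int_{Q_s}|\nabla w|^2\leqslant Cs^d$, using $|w|\leqslant 2$.

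\textbf{The energy bound for $v_2$.} I test the equation with $v_2$, which vanishes on $\partial_\sqcup Q_{2r}$, exactly as in \eqref{inner camp 1}--\eqref{inner camp 4}, and absorb the drift and divergence terms by Young's inequality. The result is
\[\iint_{Q_{2r}}|\nabla v_2|^2\leqslant C r^{d+2\alpha_0}+C r^{2}\iint_{Q_{2r}}|\nabla w|^2\leqslant Cr^{d+2\alpha_0}.\]
Combining this with \autoref{prop : Poincare} and \eqref{campan1} gives the analogue of \eqref{inner star}. The iteration lemma then gives the analogue of \eqref{eq : campanato inner}, and \autoref{lem : Lieberman} gives $\widetilde{v}\in\mathcal{C}^{\alpha/2,\alpha}_{par}(Q_R)$. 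Here $\alpha\leqslant\alpha_0$ may need to be decreased so that the perturbation terms, of order $r^{d+2}$, do not spoil the rate $r^{d+2+2\alpha}$.

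\textbf{Expected main obstacle.} I expect the hard step to be controlling the coefficient-freezing error $\mathrm{div}((H-H(x_1))\nabla w)$. It requires a Morrey-type bound on $\nabla w$ that is uniform in $r$, and one must check that $|H-H(x_1)|\leqslant Cr$ compensates the loss in the Poincaré step. A secondary point to verify is that the indicator-function coefficients, for instance $1_{\widetilde{\Omega}\setminus\overline{\Omega}}$ in front of $\nabla\widetilde{g}$, and the jump of $\mu$ across $\partial\Omega$ only enter through $L^\infty$ or Morrey norms. They never need to be differentiated, so the lack of smoothness across $\partial\Omega$ is harmless.
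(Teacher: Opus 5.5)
Your overall scheme matches the paper's: De Giorgi--Nash--Moser plus \autoref{prop:grad_spat_Morrey} for $\widetilde u$, then a caloric comparison function, Poincaré and the Campanato iteration for $\widetilde v$. Applying De Giorgi--Nash--Moser directly to \eqref{boun equ vrai}, rather than to $\overline u$ and then composing with $\sigma$ as the paper does, is a legitimate variant.

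\textbf{The gap is in the energy bound for $v_2$.} You keep the variable operator on $v_2$ and put the freezing error on $\nabla w$. Testing with $v_2$ then produces $\iint_{Q_{2r}}|H-H(x_1)|^2|\nabla w|^2$ and $\iint_{Q_{2r}}(J\cdot\nabla w)\,v_2$ over the \emph{whole} cylinder $Q_{2r}(t_1,x_1)$. So you need $\iint_{Q_{2r}}|\nabla w|^2\lesssim r^d$ all the way up to $\partial_\sqcup Q_{2r}$.

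Interior Caccioppoli for $w$ with $|w|\leqslant 2$ does not give this. It only yields $\iint_{Q_s(Y)}|\nabla w|^2\leqslant Cs^d$ when $Q_{2s}(Y)\subset Q_{2r}$. Near the lateral boundary and the bottom of $Q_{2r}$, $w$ coincides with $\widetilde v$, and its Dirichlet energy there is governed by $\widetilde v$, not by $\|w\|_\infty$. Bounded lateral data oscillating in time can produce arbitrarily large energy. Hence your inequality $Cr^2\iint_{Q_{2r}}|\nabla w|^2\leqslant Cr^{d+2\alpha_0}$ is unjustified as stated.

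\textbf{The missing ingredient} is a Caccioppoli bound for $\widetilde v$ itself: $\iint_{Q_r}|\nabla\widetilde v|^2\leqslant C r^d$ for subcylinders of $Q_R(t_0,x_0)$. It follows by testing \eqref{boun eqv vrai} with $\zeta^2\widetilde v$, using $|\widetilde v|\leqslant 2$ and $\mu\in\mathcal{L}^{1,d+2\alpha_0}$; this is \autoref{lem : energie tilv}.

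With it, your route can be repaired. Write $\nabla w=\nabla\widetilde v-\nabla v_2$ and absorb $Cr^2\iint_{Q_{2r}}|\nabla v_2|^2$ for small $r$.

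It is cleaner, though, to do what the paper does. Freeze both $H$ and $J$ at $x_1$ in the equation for $w$; the drift is removed by the change of variables $(t,x)\mapsto (t,x+J_1t)$. Then the error $\mathrm{div}((H-H_1)\nabla\widetilde v)+(J-J_1)\cdot\nabla\widetilde v$ falls on $\nabla\widetilde v$, whose Morrey bound holds on every subcylinder of $Q_R(t_0,x_0)$. This bound does two jobs:
\begin{itemize}
\item it controls the $v_2$ energy, through $|H-H_1|,|J-J_1|\leqslant Cr$;
\item it supplies \eqref{cond mui} for \autoref{prop : Poincare} with $\mu_1=\mu_2=\nabla\widetilde v$, including when Poincaré is applied on $Q_r$ itself, with $Q_{2r}$ as the enlarged cylinder.
\end{itemize}
In that last application your choice $\mu_i=\nabla w$ would again require a bound up to the parabolic boundary. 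You could avoid it there by using that $v_2$ vanishes on $\partial_\sqcup Q_{2r}$.
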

From this result and a bootstrap argument, we can prove \autoref{Theorem 2}, as follows :

\begin{proof}[Proof of \autoref{Theorem 2}]
Fix any $T >0$. Let us start with the case $k=1$. 
%We prove by induction on $k$ that $(u,v)\in C^{k,\alpha}([0,T]\times \overline{\Omega})$ for any $k\geqslant 0$, which is sufficient using a recovery argument. 
Gathering \autoref{inner regularity} and \autoref{prop : uv holder}, we deduce that there exists a real number $\alpha \in (0,1)$ such that $u,v \in \mathcal{C}_{par}^{\alpha/2, \alpha}([0,T] \times \overline{\Omega})$. 

\smallskip

Considering the equation for $u$ \eqref{eq : eq in u}, as the conditions (i), (ii) and (iii) for $k=1$ of \autoref{Theorem 2} and the compatibility condition (iv) for $\ell =1$ are fulfilled, we deduce from the fact that $v\in \mathcal{C}_{loc}^{\alpha/2, \alpha}([0,T]\times \overline{\Omega})$ and from \cite[Section IV, Theorem 5.2]{LSU68} that $u\in \mathcal{C}^{1+\alpha/2, 2+\alpha}([0,T]\times \overline{\Omega})$. 
It implies that $\dfrac{1-v}{4\varepsilon} + v|\nabla u|^2\in \mathcal{C}^{\alpha/2, \alpha}([0,T]\times \overline{\Omega})$. 
Considering the equation for $v$ \eqref{eq : eq in v} and again as the conditions (i), (ii), (iii) and (iv) of \autoref{Theorem 2} are satisfied, we can apply \cite[Section IV, Theorem 5.2]{LSU68} and deduce that $v\in \mathcal{C}^{1+\alpha/2, 2+\alpha}([0,T]\times \overline{\Omega})$. 

\smallskip

In particular, $v\in \mathcal{C}^{\beta/2, \beta}([0,T]\times \overline{\Omega})$. Back to the equation \eqref{eq : eq in u} in $u$, we deduce the same way as above that $u\in \mathcal{C}^{1+\beta/2, 2+\beta}([0,T] \times \overline{\Omega})$. Back to the equation \eqref{eq : eq in v} in $v$, we deduce similarly as above that $v\in \mathcal{C}^{1+\beta/2, 2+\beta}([0,T]\times \overline{\Omega})$. 

\medskip

For $k\geqslant 2$, one can iterate the above argument : let $m = \lfloor k/2\rfloor$ and suppose that $(u,v) \in \mathcal{C}^{m -1 + \beta/2, 2m-2 +\beta}([0,T]\times \overline{\Omega})$. Considering the equation for $u$ \eqref{eq : eq in u}, as the conditions (i), (ii) and (iii) for $k=m$ of \autoref{Theorem 2} and the compatibility condition (iv) for $\ell \leqslant m$ are fulfilled, we deduce from the fact that $v\in \mathcal{C}_{loc}^{m -1 + \beta/2, 2m-2 +\beta}([0,T]\times \overline{\Omega})$ and from \cite[Section IV, Theorem 5.2]{LSU68} that $u\in \mathcal{C}^{m+\beta/2, 2m+\beta}([0,T]\times \overline{\Omega})$.
It implies that $\dfrac{1-v}{4\varepsilon} + v|\nabla u|^2\in \mathcal{C}^{m-1+\beta/2, 2m-2+\beta}([0,T]\times \overline{\Omega})$. 
Considering the equation for $v$ \eqref{eq : eq in v} and again as the conditions (i), (ii) and (iii) of \autoref{Theorem 2} are satisfied, we can apply \cite[Section IV, Theorem 5.2]{LSU68} and deduce that $v\in \mathcal{C}^{m+\beta/2, 2m+\beta}([0,T]\times \overline{\Omega})$. 
%Suppose that  $u,v\in \mathcal{C}_{par}^{k/2, \alpha/2, k +\alpha}([0,T]\times \overline{\Omega})$ for an integer $k\geqslant 0$. 
%Considering the equation for $u$ in \eqref{système de départ}, we deduce from the fact that $v\in \mathcal{C}_{loc}^{\alpha/2, \alpha}([0,T]\times \overline{\Omega})$ and from \cite[Section IV, Theorem 5.1]{LSU68} that the function $u\in \mathcal{C}_{par}^{k/2 +1 +\alpha/2,k+ 2+ \alpha}([0,T] \times \overline{\Omega})$. In particular, $|\nabla u|^2 \in \mathcal{C}_{par}^{k/2 +1/2 +\alpha/2,k+ 1+ \alpha}([0,T] \times \overline{\Omega})$.
%
%
%\smallskip
%
%Going back to the equation in $v$ in \eqref{système de départ}, considering that $\dfrac{1-v}{4\varepsilon} - v|\nabla u|^2$ belongs to $\mathcal{C}_{par}^{k+\alpha, k/2 + \alpha/2}([0,T]\times \overline{\Omega})$, we deduce, again invoking \cite[Section IV, Theorem 5.1]{LSU68}, that the function $v$ belongs to $\mathcal{C}_{par}^{k+ 2+ \alpha, k/2 +1 +\alpha/2}([0,T] \times \overline{\Omega})$.
\end{proof}

%The proof that $\widetilde{u}$ is in $C^{0,\alpha}(Q_R(t_0, x_0))$ is done in the following lemma 

As for the inner regularity, the starting point is to be able to establish the existence of $\alpha$ such that $|\nabla \widetilde{u}|^2\in \mathcal{L}^{1,d+2\alpha} (Q_R(t_0,x_0))$, which is performed in the following lemma. 

\begin{lemma}\label{lem : tilu morrey}
There exists a real number $\alpha \in (0,1)$ such that $\widetilde{u}\in \mathcal{C}_{par}^{\alpha/2,\alpha} (Q_R(t_0,x_0))$. 
Moreover, one has $|\nabla \widetilde{u}|^2 \in \mathcal{L}^{1,d+2\alpha}(Q_R(t_0,x_0))$ and $\mu \in \mathcal{L}^{1, d+2\alpha}(Q_R(t_0,x_0))$. 
\end{lemma}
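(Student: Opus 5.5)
We plan to treat the equation \eqref{boun equ vrai} satisfied by $\widetilde{u}$ as a linear uniformly parabolic equation in divergence form with bounded measurable coefficients, and apply the De Giorgi--Nash--Moser theorem to it, exactly as in the proof of \autoref{inner regularity}. The coefficient matrix $A=(\eta_\varepsilon+\widetilde{v}^2)M/j$ satisfies $\lambda' I_d\leqslant A\leqslant \Lambda' I_d$ on $B_{2R}(x_0)$. This uses $\eta_\varepsilon\leqslant \eta_\varepsilon+\widetilde v^2\leqslant \eta_\varepsilon+4$ (by the maximum principle \eqref{eq : principe maximum}, $0\leqslant \widetilde v\leqslant 2$ after reflection), \eqref{hypA}, and $c_j\leqslant j\leqslant C_j$. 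The drift $B=-(\eta_\varepsilon+\widetilde v^2)M\nabla j/j^2$ is bounded since $j\in\mathcal C^{1,1}$.

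The right-hand side is treated term by term. The term $2\partial_t\widetilde g\,1_{\widetilde\Omega\setminus\overline\Omega}+\widetilde U_0$ is in $L^\infty$ by (ii), (iii) and the remark after \eqref{def U0 V0}. The term $\frac{2}{j}\mathrm{div}(1_{\widetilde\Omega\setminus\overline\Omega}(\eta_\varepsilon+\widetilde v^2)M\nabla\widetilde g)$ is of the form $h_1\,\mathrm{div}(h_2)$ with $h_1=2/j\in\mathcal C^1$ and $h_2\in L^\infty$, since $\nabla \widetilde g$ is bounded. De Giorgi--Nash--Moser with bounded lower order terms and divergence-form data $\mathrm{div}(F)$, $F\in L^\infty$ (e.g.\ \cite[Section IV]{DB93}, \cite{LSU68}), then gives $\widetilde u\in\mathcal C^{\alpha/2,\alpha}_{par}(Q_R(t_0,x_0))$ for some $\alpha\in(0,1)$. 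To use the form $h_1\mathrm{div}(h_2)$, we first rewrite $h_1\mathrm{div}(h_2)=\mathrm{div}(h_1h_2)-h_2\cdot\nabla h_1$.

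With Hölder continuity in hand, we apply \autoref{prop:grad_spat_Morrey} to $f=\widetilde u$ with these $A,B,h,h_1,h_2$. Its hypotheses are exactly of the form just checked: the weak formulation \eqref{eq : f campanato} is the one obtained from \eqref{boundary : equ} after dividing by $j$. This yields $\iint_{Q_\rho}|\nabla\widetilde u|^2\leqslant C\rho^{d+2\alpha}$ for all small cylinders $Q_\rho\subset Q_R$, i.e.\ $|\nabla\widetilde u|^2\in\mathcal L^{1,d+2\alpha}(Q_R(t_0,x_0))$. We shrink $R$ slightly, or cover $Q_R$ by cylinders centered at points of $Q_R$ with radius comparable to $R$, so that the cylinders $Q_\rho(t,x)$ are admissible. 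Finally, by \eqref{def mu}, $|\mu|\leqslant C(1+|\nabla\widetilde u|^2)+|\widetilde V_0|$. Here $M/j$ is bounded and $|\widetilde v|\leqslant 2$, and $\widetilde V_0\in L^\infty$ by (ii). Constants lie in $\mathcal L^{1,d+2\alpha}$ trivially, since $\iint_{Q_\rho}1\sim\rho^{d+2}\leqslant C\rho^{d+2\alpha}$. Hence $\mu\in\mathcal L^{1,d+2\alpha}$.

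The main obstacle is the first step: justifying that $\widetilde u$ is a genuine weak solution across $\partial\Omega$ in the class required by De Giorgi--Nash--Moser. This requires checking that $\widetilde u\in L^2H^1$ with no jump on $\partial\Omega$, which holds because $2\widetilde g-\overline u\circ\sigma=\overline u$ on $\partial\Omega$ thanks to $u=g$ there. It also requires verifying that the divergence source carrying the indicator $1_{\widetilde\Omega\setminus\overline\Omega}$ is admissible, with $F=1_{\widetilde\Omega\setminus\overline\Omega}(\eta_\varepsilon+\widetilde v^2)M\nabla\widetilde g\in L^\infty$. There is a second subtlety: \autoref{prop:grad_spat_Morrey} asks $h_1\mathrm{div}(h_2)$ with $h_1\in\mathcal C^1$, but $h_2$ is discontinuous. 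Only the weak form \eqref{eq : f campanato} is used, so $h_2\in L^\infty$ suffices, and I expect this to go through after that check.
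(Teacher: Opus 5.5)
Your proof is correct. Its second and third steps coincide with the paper's: you apply \autoref{prop:grad_spat_Morrey} with the same $A,B,h,h_1,h_2$, then bound $\mu$ pointwise by $C(1+|\nabla\widetilde u|^2)+|\widetilde V_0|$.

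The difference is in how you obtain Hölder continuity of $\widetilde u$. The paper applies De Giorgi--Nash--Moser up to the boundary to $\overline u$ on $[0,T]\times\overline\Omega$, with Dirichlet data $g$ on a Lipschitz domain. It then transfers the Hölder bound to the exterior through the explicit formula $\widetilde u=2\widetilde g-\overline u\circ\sigma$, which only requires $\sigma$ Lipschitz and $\widetilde g$ Hölder. You instead apply the interior theorem, with bounded drift and data $f+\mathrm{div}F$, directly to the reflected equation \eqref{boun equ vrai} on $\widetilde\Omega$.

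Your route needs only interior regularity, and it matches the ``reflect, then prove interior regularity'' strategy announced for this section. The cost is that, already at the Hölder step, you need three things:
\begin{itemize}
\item the reflected weak formulation, which you correctly justify via matching traces $2g-u=u$ on $\partial\Omega$;
\item a bounded drift $M\nabla j/j^2$;
\item bounded $\partial_t\widetilde g$ and $\nabla\widetilde g$.
\end{itemize}
The paper's route needs only Hölder continuity of $g$ at that stage. Since the Morrey step requires these bounds in both approaches (cf.\ \eqref{moche 1}, which you inherit from the paper), nothing is lost overall. You are also more careful than the paper on one point: $\widetilde v$ takes values in $[0,2]$, not $[0,1]$.
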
 

\begin{proof}
According to \eqref{hypA}, $\exists \lambda, C >0$ such that

\[\frac{(\eta_\varepsilon + \widetilde{v}^2)}{j}M \geqslant \frac{\eta_\varepsilon}{C}\lambda I_d > 0, \quad \left|(\eta_\varepsilon + \widetilde{v}^2) M \frac{\nabla j}{j^2}\right| \leqslant \Lambda,\]

\begin{equation}\label{moche 1}
\left|2 \partial_t \widetilde{g}1_{\widetilde{\Omega}\setminus \overline{\Omega}}\right|, |\widetilde{U}_0| \leqslant C,\quad  \frac{2}{j} \in \mathcal{C}^{1,1}(\widetilde{\Omega}), \quad \left|1_{\widetilde{\Omega}\setminus \overline{\Omega}} (\eta_\varepsilon + \widetilde{v}^2) M \nabla \widetilde{g}\right| \leqslant C.
\end{equation}

Let $T >0$.  
Considering the equation \eqref{eq : eq in u} satisfied by $u$ , according to \cite[Theorem IV.1.2]{DB93}, there exists a real number $\alpha \in (0,1)$ such that $\overline{u} \in \mathcal{C}_{par}^{\alpha/2,\alpha}([0,T]\times \overline{\Omega})$. Using that $\sigma \in \mathcal{C}^{1,1}(\widetilde{\Omega})$ and the expression of $\widetilde{u}$, we deduce that $\widetilde{u}\in \mathcal{C}_{par}^{\alpha/2,\alpha}(Q_{R}(t_0,x_0))$. 
 
Considering the condition \eqref{moche 1}, the right hand-side of \eqref{boun equ vrai} 
satisfies the hypothesis of \autoref{prop:grad_spat_Morrey} with 

\[A= \dfrac{\eta_\varepsilon + \widetilde{v}^2}{j}M, \quad B=- (\eta_\varepsilon + \widetilde{v}^2)A \dfrac{\nabla j}{j^2}, \quad h= 2 \partial_t \widetilde{g} 1_{\widetilde{\Omega}\setminus \overline{\Omega}} + \widetilde{U}_0,\]

\[h_1 = \frac{2}{j}, \quad h_2 = 1_{\widetilde{\Omega} \setminus \overline{\Omega}} (\eta_\varepsilon + \hat{v}^2)M\nabla \widetilde{g}.\]
We deduce that $\nabla \widetilde{u} \in \mathcal{L}^{2,d+2\alpha}(Q_R(t_0,x_0))$. 

\smallskip

We conclude that $\mu \in \mathcal{L}^{1,d+2\alpha}(Q_R(t_0,x_0))$. Observe that $\frac{1-\widetilde{v}}{4\varepsilon}$ is bounded thanks to \autoref{Maximum principle vrai}, that $\widetilde{V}_0$ is bounded because $j\geqslant c_j$ and that  $(A\hat{v}\nabla \hat{u} \cdot \hat{u})/j \in \mathcal{L}^{1,d+2\alpha}(Q_{R}(t_0,x_0))$. We conclude that $\mu \in \mathcal{L}^{1,d+2\alpha}(Q_{R}(t_0,x_0))$.
\end{proof}
The main difficulty of \autoref{prop : uv holder} is to prove that $\widetilde{v}\in \mathcal{C}_{par}^{\alpha/2,\alpha}(Q_R(t_0, x_0))$.
The proof of \autoref{prop : uv holder} follows the same method as the proof of \autoref{inner regularity}, although it is slightly more technical due to the presence of non constant terms in \eqref{boun eqv vrai}.
As in the proof of the inner regularity, it is sufficient to prove a uniform decay estimate of the Campanato norm, namely : 
%The main claim to prove the regularity of $\widetilde{v}$ is the foolowing lemma : 
\begin{lemma}\label{lem : campanato v}
Let $\alpha\in (0,1)$ be given by \autoref{lem : tilu morrey}. There exists a constant $C_1 > 0$ such that for any $(t_1, x_1) \in Q_R(t_0,x_0)$ and any $\rho < r \leqslant R$ with $Q_r(t_1, x_1) \subset Q_R (t_0,x_0)$, 

\[\iint_{Q_\rho(t_1,x_1)} |\widetilde{v} - \{\widetilde{v}\}_{\rho,t_1,x_1}|^2 \leqslant C_1\left(\frac{\rho}{r}\right)^{d+4} \iint_{Q_{r}(t_1,x_1)} |\widetilde{v} - \{\widetilde{v}\}_{r, t_1, x_1}|^2 + C_1r^{d+2 +2\alpha}.\]
\end{lemma}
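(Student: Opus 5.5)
I will mimic the proof of \eqref{inner star} in \autoref{inner regularity}, replacing the heat operator by the variable-coefficient operator of \eqref{boun eqv vrai}. Fix $(t_1,x_1)\in Q_R(t_0,x_0)$ and $\rho<r\leqslant R$ with $Q_{2r}(t_1,x_1)\subset Q_{2R}(t_0,x_0)$. Let $w$ be the weak solution of
\[
\partial_t w-\text{div}(H(x_1)\nabla w)=0 \ \text{in } Q_{2r}(t_1,x_1),\qquad w=\widetilde v \ \text{on } \partial_\sqcup Q_{2r}(t_1,x_1).
\]
The coefficients are frozen at the centre. After a linear change of variables this is the heat equation, so the maximum principle gives $|w|\leqslant \sup|\widetilde v|\leqslant 2$. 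The Campanato estimate \cite[Lemma 4.5]{L96} then gives \eqref{campan1} for $w$, with a constant depending only on $\lambda$ and $\|H\|_\infty$.

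The error $v_2=\widetilde v-w$ vanishes on $\partial_\sqcup Q_{2r}$ and satisfies
\[
\partial_t v_2-\text{div}(H(x_1)\nabla v_2)= -\text{div}\big((H(x_1)-H)\nabla\widetilde v\big)+J\cdot\nabla\widetilde v+\mu .
\]
This is exactly the form handled by \autoref{prop : Poincare}, with $A=H(x_1)$, $B=0$, $g-g(x_1)$ taken as $H(x_1)-H$ (Lipschitz, since $\partial\Omega\in\mathcal C^{2,1}$ makes $M$ and $j$ of class $\mathcal C^{1,1}$), $\mu_1=\nabla\widetilde v$, $h\cdot\mu_2=J\cdot\nabla\widetilde v$, and $\mu\in\mathcal L^{1,d+2\alpha}$ by \autoref{lem : tilu morrey}.

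Testing with $v_2$ and using $v_2(t_1-4r^2)=0$ and $|v_2|\leqslant 4$, I expect
\[
\lambda\iint_{Q_{2r}}|\nabla v_2|^2\leqslant Cr^2\iint_{Q_{2r}}|\nabla\widetilde v|^2+C\Big(\iint_{Q_{2r}}|\nabla \widetilde v|^2\Big)^{1/2}r^{(d+2)/2}+C\|\mu\|_{\mathcal L^{1,d+2\alpha}}r^{d+2\alpha}.
\]
The first term uses $|H-H(x_1)|\leqslant Cr$ and the second uses Cauchy--Schwarz on $J\cdot\nabla\widetilde v\, v_2$. Combining \autoref{prop : Poincare}, \eqref{campan1} and this energy bound exactly as in the chain of inequalities in the proof of \autoref{inner regularity} would give the claimed decay. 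This works provided every term involving $\nabla\widetilde v$ is $O(r^{d+2\alpha})$ after multiplication by $r^2$.

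\textbf{Main obstacle.} Hypothesis \eqref{cond mui} of \autoref{prop : Poincare} requires the a priori Morrey bound $\iint_{Q_r}|\nabla\widetilde v|^2\leqslant C r^{d}$, which is not yet known.

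I would first get it from a Caccioppoli inequality for \eqref{boun eqv vrai}. Test with $\zeta^2(\widetilde v-k)$, using boundedness of $\widetilde v$ and $\mu\in\mathcal L^{1,d+2\alpha}$:
\[
\iint_{Q_r}|\nabla\widetilde v|^2\leqslant \frac{C}{r^2}\iint_{Q_{2r}}|\widetilde v-k|^2+C r^{d+2\alpha}\leqslant Cr^{d}.
\]
With this bound the extra terms are at most $Cr^{d+2}$, which is better than $r^{d+2\alpha}$. The $J$-term is then $\leqslant Cr^{d+2}$, and after the factor $\rho^2\leqslant r^2$ from Poincaré it contributes $O(r^{d+2+2\alpha})$.

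If the bookkeeping of powers fails at one step, I would instead absorb the perturbation terms into the left-hand side with a small parameter, and iterate via \cite[Lemma 4.3]{L96}. Either way the constant $C_1$ depends only on $\lambda$, $\|H\|_{\mathcal C^{0,1}}$, $\|J\|_\infty$, $\varepsilon$ and $\|\mu\|_{\mathcal L^{1,d+2\alpha}}$, as required.
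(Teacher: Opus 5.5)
Your route is the same as the paper's. You freeze the principal coefficient at $x_1$ and compare $\widetilde v$ with the constant-coefficient solution $w$. You get $\iint_{Q_r}|\nabla\widetilde v|^2\leqslant Cr^d$ from a Caccioppoli argument, which is exactly \autoref{lem : energie tilv}. You then do the energy estimate for $v_2=\widetilde v-w$ and close with \autoref{prop : Poincare} and the Campanato decay of $w$. The one real difference is that you keep the whole drift $J\cdot\nabla\widetilde v$ in the source, whereas the paper freezes $J$ at $x_1$. Your choice keeps the comparison problem drift-free, and it is harmless inside \autoref{prop : Poincare}, where the $h\cdot\mu_2$ term already carries the weight $r^2$. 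But it loses the smallness factor $|J-J(x_1)|\leqslant Cr$ in the energy estimate, and that is where your bookkeeping fails. Your Cauchy--Schwarz bound gives
\[
\Big|\iint_{Q_{2r}}J\cdot\nabla\widetilde v\,v_2\Big|\leqslant C\,r^{\frac{d+2}{2}}\Big(\iint_{Q_{2r}}|\nabla\widetilde v|^2\Big)^{1/2}\leqslant C\,r^{\frac{d+2}{2}}\,r^{\frac d2}=Cr^{d+1},
\]
not $Cr^{d+2}$ as you assert. So you only get $\iint_{Q_{2r}}|\nabla v_2|^2\leqslant Cr^{d+\min(1,2\alpha)}$. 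After the factor $r^2$ from Poincaré the error term is $O(r^{d+3})$, which is not $O(r^{d+2+2\alpha})$ when $\alpha>1/2$.

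The repair takes one line. Since $v_2(t,\cdot)\in H^1_0(B_{2r}(x_1))$ for a.e.\ $t$, the spatial Poincaré inequality $\int_{B_{2r}}|v_2|^2\leqslant Cr^2\int_{B_{2r}}|\nabla v_2|^2$ gives $|\iint_{Q_{2r}} J\cdot\nabla\widetilde v\,v_2|\leqslant \|J\|_\infty (Cr^d)^{1/2}(Cr^2\iint_{Q_{2r}}|\nabla v_2|^2)^{1/2}\leqslant \delta\iint_{Q_{2r}}|\nabla v_2|^2+C_\delta r^{d+2}$, and the $\delta$-term is absorbed into the left-hand side. Two other fixes also work:
\begin{itemize}
\item Freeze $J$ as the paper does. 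Then $\iint J(x_1)\cdot\nabla v_2\,v_2=0$ because $v_2$ vanishes on $\partial B_{2r}$, and $(J-J(x_1))\cdot\nabla\widetilde v$ gains the factor $r$.
\item Replace $\alpha$ by $\min(\alpha,1/2)$. \autoref{lem : tilu morrey} permits this, since it only asserts the existence of some $\alpha$.
\end{itemize}
With this correction the rest of your argument goes through. Your bound $|\widetilde v|\leqslant 2$ (hence $|v_2|\leqslant 4$) is also more accurate than the paper's $0\leqslant\widetilde v\leqslant 1$, since $\widetilde v=2-\overline v\circ\sigma\in[1,2]$ outside $\Omega$.
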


We briefly recall how this estimate leads to the proof of \autoref{prop : uv holder}. 

\begin{proof}[Proof of \autoref{prop : uv holder}]
Suppose that \autoref{lem : campanato v} is satisfied. Fix $(t_1,x_1) \in Q_R(t_0,x_0)$, so that $Q_R(t_1, x_1) \subset Q_{2R}(t_0,x_0)$. 

Using the iteration lemma \cite[Lemma 4.6]{L96} and the fact that $0\leqslant \widetilde{v} \leqslant 1$ allows to conclude that for any $Q_r(t_1, x_1) \subset Q_R(t_1,x_1)$, 

\begin{multline*}
\iint_{Q_r(t_1,x_1)} |\widetilde{v} - \{\widetilde{v}\}_{r,t_1,x_1}|^2\leqslant C_1 r^{d+2+2\alpha} \left(\frac{1}{R^{d+2+2\alpha}}\iint_{Q_{R}(t_1,x_1)} |\widetilde{v} - \{\widetilde{v}\}_{2r,t_1,x_1}|^2 + 1\right)\\
\leqslant C_1 r^{d+2+2\alpha} \left(\frac{1}{R^{d+2+2\alpha}}4R^{d+2} + 1\right).
\end{multline*}
Using the Cauchy-Schwarz inequality, we deduce that 

\begin{multline*}
\iint_{Q_r (t_1,x_1)} |\widetilde{v} - \{v\}_{r,t_1,x_1}| \leqslant |Q_r|^{1/2} \left(\iint_{Q_r(t_1, x_1)} |\widetilde{v} - \{\widetilde{v}\}_{r,t_1,x_1}|^2\right)^{1/2}= C(\alpha, d) r^{d+2+\alpha}.
\end{multline*}
Hence, according to \autoref{lem : Lieberman}, we deduce that $[\widetilde{v}]_{\alpha, Q_R(t_0, x_0)} \leqslant C(\alpha ,d)$. Recalling that $\widetilde{v}$ is bounded, we deduce that $\widetilde{v} \in C_{par}^{\alpha/2, \alpha} (Q_R(t_0, x_0))$. 
\end{proof}

%There exists a constant $C_1$ such that for any parabolic cylinder $Q_r(t_1,x_1) \subset Q_{R/2}(t_0,x_0)$, 
%
%\[\iint_{Q_r(t_1, x_1)} |\widetilde{v} - \{\widetilde{v}\}_r|^2 \leqslant C_1 r^{d+2+2\alpha}.\] 
%\end{lemma}

The rest of the subsection is devoted to the proof of \autoref{lem : campanato v}. Let us briefly explain the outline of the proof. 
We fix a $(t_1, x_1) \subset Q_R(t_0, x_0)$, so that $Q_R(t_1, x_1) \subset Q_{2R}(t_0,x_0))$ and we fix $r <R$. 
%In order to prove the estimate, we prove the intermediary inequality : there exists a constant $C$ such that for any $\rho < r \leqslant R$, one has 
%\begin{equation}\label{campanato 1}
%\iint_{Q_\rho(t_1,x_1)} |\widetilde{v} - \{\widetilde{v}\}_\rho|^2 \leqslant C\left(\frac{\rho}{r}\right)^{d+4} \iint_{Q_{r}(t_1,x_1)} |\widetilde{v} - \{\widetilde{v}\}_{r, t_1, x_1}|^2 + C|r|^{d+2 +2\alpha}.
%\end{equation}
%In order to prove \eqref{campanato 1}, we fix $r' \in (0,R)$. 
We consider a ``heat''-extension $w$  of $\widetilde{v}$ on $Q_{r}(t_1,x_1)$. The difference $v_2 =\widetilde{v}-w$ satisfies a parabolic PDE  on $Q_{r}(t_1,x_1)$ with homogeneous initial and boundary Dirichlet conditions, so that we can apply $v_2$ as a test function to it and estimate $\iint_{Q_{r}(t_1,x_1)} |\nabla v_2|^2$. 

\smallskip

The source term of the equation in $v_2$ depends on the function $\nabla \widetilde{v}$, so that we need a preliminary estimate of the energy $\iint_{Q_{r}(t_1,x_1)} |\nabla \widetilde{v}|^2$, which we state before getting to the proof of \autoref{lem : campanato v}. 

\begin{lemma}\label{lem : energie tilv}
There exists a constant $C_\star$ depending only on $R$ such that for any $Q_{r}(t_1, x_1) \subset Q_R(t_0, x_0)$, one has 

\[\iint_{Q_{r}(t_1, x_1)} |\nabla \widetilde{v}|^2 \leqslant C_\star r^d.\]
\end{lemma}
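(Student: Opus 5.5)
The estimate is a Caccioppoli inequality for \eqref{boun eqv vrai} with a source term in a Morrey space. I would prove it by testing the equation for $\widetilde{v}$ against a cut-off times the difference between $\widetilde{v}$ and a constant. The uniform bound $0\leqslant \widetilde{v}\leqslant 2$ is what removes the need for any Hölder information on $\widetilde{v}$. It follows from the maximum principle \autoref{Maximum principle vrai} and the definition \eqref{def : u tilde}.

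\textbf{Cut-off and test function.} Given $Q_r(t_1,x_1)\subset Q_R(t_0,x_0)$, take $\zeta\in\mathcal{C}_c^\infty$ with $\zeta\equiv1$ on $Q_r(t_1,x_1)$ and support in $(t_1-4r^2,t_1]\times B_{2r}(x_1)$. It should satisfy $|\nabla\zeta|\leqslant C/r$ and $|\partial_t\zeta|\leqslant C/r^2$, and vanish at the bottom time $t_1-4r^2$. If $Q_{2r}(t_1,x_1)$ is not contained in $Q_{2R}(t_0,x_0)$, I would reduce to $r$ comparable to $R$ and use the global bound $\iint_{Q_{2R}}|\nabla\widetilde v|^2\leqslant C$; this bound comes from the $L^2_tH^1_x$ regularity in $\mathcal{F}$ and the energy bound \eqref{energy bound thm 1}. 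I would then test \eqref{boun eqv vrai}, in its weak form, with $\varphi=\zeta^2(\widetilde{v}-k)$ for a constant $k$, for instance $k=\{\widetilde v\}_{2r,t_1,x_1}$ or simply $k=0$. This $\varphi$ is admissible because $\widetilde v\in\mathcal{F}$ and is bounded.

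\textbf{Estimating the terms.} The time derivative term becomes $\frac12\iint\zeta^2\partial_t|\widetilde v-k|^2$. It integrates by parts to a nonnegative term at the top time minus $\iint|\widetilde v-k|^2\zeta\partial_t\zeta$. Since $|\widetilde v-k|\leqslant 2$, the latter is at most $C r^{-2}|Q_{2r}|\leqslant Cr^d$. For the elliptic term, the continuity and ellipticity of $H\geqslant\lambda I_d$ give $\lambda\iint\zeta^2|\nabla\widetilde v|^2$, minus a cross term. By Young's inequality that cross term is absorbed up to $C\iint|\widetilde v-k|^2|\nabla\zeta|^2\leqslant Cr^d$. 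The drift term $J\cdot\nabla\widetilde v$, with $J$ bounded, is handled the same way and gives $Cr^{d+2}$. Finally, the source term is bounded by $2\iint_{Q_{2r}}|\mu|\leqslant 2\|\mu\|_{\mathcal{L}^{1,d+2\alpha}}(2r)^{d+2\alpha}$, using \autoref{lem : tilu morrey}. Since $r\leqslant R$, this is at most $C_R r^d$.

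Collecting these bounds gives $\iint_{Q_r}|\nabla\widetilde v|^2\leqslant C_\star r^d$, with $C_\star$ depending on $R$, on $\lambda$, on the bounds for $H$ and $J$, and on the Morrey norm of $\mu$.

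\textbf{Main obstacle.} The delicate point is justifying the test function across $\partial\Omega$. There $\mu$ is only in $L^1$ in a Morrey sense, and $\nabla\widetilde v$ may jump in its normal component. Both issues are handled by the distributional formulation \eqref{boundary : eqv}, which holds on all of $\widetilde\Omega$. A second, minor point is the term $\widetilde V_0$ near $t\leqslant1$: it is bounded by assumption (ii), so it is absorbed into $\mu$.
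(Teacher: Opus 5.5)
Your proposal is correct and follows essentially the paper's own argument: the paper tests the equation for $\widetilde{v}$ with $\widetilde{v}\zeta^2$ (your choice $k=0$). It then bounds the time-derivative, cross, drift and source terms by $Cr^d$, $Cr^d$, $Cr^{d+2}$ and $\|\mu\|_{\mathcal{L}^{1,d+2\alpha}}r^{d+2\alpha}$, exactly as you do. Your bound $0\leqslant\widetilde{v}\leqslant 2$ and your treatment of the nonnegative top-time term are the accurate versions of those steps; the paper states $0\leqslant\widetilde{v}\leqslant 1$, which fails on the reflected side, but the error is harmless. Your caveat about $Q_{2r}(t_1,x_1)\not\subset Q_{2R}(t_0,x_0)$ is unnecessary, since $Q_r(t_1,x_1)\subset Q_R(t_0,x_0)$ with $r\leqslant R$ always gives that inclusion.
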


\begin{proof}
Let $\zeta\in \mathcal{C}_c^\infty (\mathbb{R}\times \Omega)$ be a test function such that $0\leqslant \zeta \leqslant 1$, $\text{Supp}(\zeta) \subset (t_1-(2r)^2, t_1 +(2r)^2) \times B_{2r}(x_1)$ and $\zeta =1$ on $Q_{r}(t_1, x_1)$, with 

\[|\nabla \zeta| \leqslant \frac{2}{r}, \quad |\partial_t \zeta | \leqslant \frac{C}{r^2}.\]
Applying $\widetilde{v} \zeta^2 \in L_{loc}^\infty((0,+\infty) \times \widetilde{\Omega})\cap H^1((0,+\infty); L^2(\widetilde{\Omega})) \cap L^2((0,+\infty); H_{loc}^1(\widetilde{\Omega}))$ as a test function to \eqref{boun eqv vrai} yields 

\begin{multline}\label{energie tilv 1}
\iint_{Q_{2r}} \zeta^2 \partial_t \widetilde{v} \widetilde{v} + \iint_{Q_{2r}} H\nabla \widetilde{v}\cdot \nabla \widetilde{v} \zeta^2 + 2 \iint_{Q_{2r}} H \nabla \widetilde{v} \cdot \nabla \zeta \, \widetilde{v} \zeta \\
= \iint_{Q_{2r}} \mu \widetilde{v} \zeta^2 - \iint_{Q_{2r}} J\cdot \nabla \widetilde{v} \widetilde{v} \zeta^2.
\end{multline} 
Set 

\[I= \iint_{Q_{2r}} \zeta^2 \partial_t \widetilde{v} \widetilde{v}, \quad II = 2 \iint_{Q_{2r}} H \nabla \widetilde{v} \cdot \nabla \zeta \widetilde{v} \zeta,\]

\[III = \iint_{Q_{2r}} \mu \widetilde{v} \zeta^2, \quad IV = \iint_{Q_{2r}} J\cdot \nabla \widetilde{v} \widetilde{v} \zeta^2.\]
We  estimate each term separately. Using that $0\leqslant \widetilde{v}\leqslant 1$, 

\begin{equation}\label{energie tilv I}
I = \iint_{Q_{2r}} \zeta^2 \partial_t \widetilde{v} \widetilde{v} =\iint_{Q_{2r}} \zeta^2 \partial_t \frac{1}{2}|\widetilde{v}|^2 = \int_{Q_{2r}} |\widetilde{v}|^2 \zeta \partial_t \zeta \leqslant \int_{Q_{2r}} |\partial_t \zeta| \leqslant Cr^{d}.
\end{equation} 
Using Young's inequality,

\begin{multline}\label{energie tilv II}
II = 2 \iint_{Q_{2r}} H \nabla \widetilde{v} \cdot \nabla \zeta \widetilde{v} \zeta \leqslant \delta \iint_{Q_{2r}} |\nabla \widetilde{v}|^2 \zeta^2 + C_\delta \|H\|_{\infty,\widetilde{\Omega}}^2 \iint_{Q_{r}} |\widetilde{v}|^2 |\nabla \zeta|^2\\
\leqslant \delta \iint_{Q_{2r}} |\nabla \widetilde{v}|^2 \zeta^2  + C_\delta \|H\|_{\infty,\widetilde{\Omega}}^2 r^{d}.
\end{multline}
and 

\begin{equation}\label{energie tilv IV}
\iint_{Q_{2r}} J\cdot \nabla \widetilde{v} \widetilde{v} \zeta^2 \leqslant \delta \iint_{Q_{2r}} |\nabla \widetilde{v}|^2 \zeta^2 + C_\delta \|J\|_{\infty,\widetilde{\Omega}}^2 \iint_{Q_{2r}} |\widetilde{v}|^2 \leqslant  \delta \iint_{Q_{2r}} |\nabla \widetilde{v}|^2 \zeta^2  + C_\delta \|J\|_{\infty,\widetilde{\Omega}}^2 r^{d+2}.
\end{equation}
Finally, recalling $\mu\in \mathcal{L}^{1,d+2\alpha}$, 

\begin{equation}\label{energie tilv III}
IV = \iint_{Q_{2r}} \mu \widetilde{v} \zeta^2 \leqslant \iint_{Q_{2r}} |\mu| \leqslant \|\mu\|_{\mathcal{L}^{1,d+2\alpha}} r^{d+2\alpha}.
\end{equation}

Recalling that $H(x) \geqslant \lambda I_d$ for all $x\in \widetilde{\Omega}$, with $\lambda >0$, we can choose $\delta$ small enough such that, combining \eqref{energie tilv I}, \eqref{energie tilv II}, \eqref{energie tilv III} and \eqref{energie tilv IV} into \eqref{energie tilv 1}, 

\[\iint_{Q_r(t_1,x_1)} |\nabla \widetilde{v}|^2 \leqslant \iint_{Q_{2r}} |\nabla \widetilde{v}|^2 \zeta^2 \leqslant Cr^{d} + Cr^{d+2} + Cr^{d+2\alpha} \leqslant C_\star r^d.\]
This concludes the lemma.
\end{proof}

We now turn to the proof of \autoref{lem : campanato v}. 

\begin{proof}[Proof of \autoref{lem : campanato v}]
Fix $r< R$ such that $Q_{r}(t_1, x_1) \subset Q_R (t_0,x_0)$. 
Denote by $H_1 = H(x_1)$ and $J_1 = J(x_1)$. 
Let $w$ be the (unique weak) solution in $H^1((t_1 -(2r)^2) \times B_{2r}(x_1))$ of the following equation 

\[\left\{\begin{array}{rll}
\partial_t w - \text{div}(H_1 \nabla w) + J_1 \nabla w &= 0, &\text{on } Q_{2r}(t_1, x_1)\\
w &= \widetilde{v} & \text{on } \partial_{\sqcup} Q_{2r}(t_1,x_1)\\
\end{array}\right.\]
Notice that $\hat{w} : (t,x) \mapsto w(t, x +J_1t)$ is the solution of $\partial_t \hat{w} - \text{div}(H_1 \nabla \hat{w}) =0$, so $w$ will have the same properties as the classic heat equation (maximum principle and Campanato estimates). In particular, $|w| \leqslant \widetilde{v}\leqslant 1$.  

\smallskip

Let $v_2= \widetilde{v} -w$. The function $v_2$ is a weak solution of the following equation

\begin{equation}\label{camp v eq v2}
\left\{
\begin{array}{l@{\qquad}l}
\begin{split}
\partial_t v_2 - \operatorname{div}(H_1 \nabla v_2) - J_1 \nabla v_2
&= \mu + \operatorname{div}((H-H_1)\nabla \widetilde{v}) \\
&\quad + (J-J_1)\cdot \nabla \widetilde{v},
\end{split}
&
\text{on } Q_{2r}(t_1,x_1), \\[0.5ex]
v_2 =0
&
\text{on } \partial_{\sqcup}Q_{2r}(t_1,x_1).
\end{array}
\right.
\end{equation}

%Indeed, check that 
%
%\begin{align*}
%\partial_t v_2 &= \partial_t \widetilde{v} - \partial_t w \\
%&= \text{div}(H \nabla \widetilde{v}) -J\cdot \nabla\widetilde{v} + \mu - \text{div}(H_1 \nabla w) + J_1 \nabla w\\
%&= \text{div}(H \nabla \widetilde{v}) -J\cdot \nabla\widetilde{v} + \mu - \text{div}(H_1 (\nabla \widetilde{v} - \nabla v_2)) + J_1 (\nabla \widetilde{v} -\nabla v_2)\\
%&= \text{div}(H_1 \nabla v_2) - J_1\cdot \nabla v_2 + \mu + \text{div}((H-H_1) \nabla \widetilde{v}) - (J-J_1) \nabla \widetilde{v}.
%\end{align*}

The rest of the proof is divided into two steps : 

\begin{itemize}
\item[$\bullet$] \textbf{Step 1 :} We prove that 

\[\iint_{Q_{2r}(t_1, x_1)} |\nabla v_2|^2 \leqslant C_2\cdot  2r^{d+2+2\alpha}.\]
This is done by multiplying \eqref{camp v eq v2} by $v_2$ which vanishes at the boundary. 

\item[$\bullet$] \textbf{Step 2 :} We conclude using Poincare's inequality 

\[\iint_{Q_{r}(t_1, x_1)} |v_2 -\{v_2\}_{r, t_1,x_1}|^2 \leqslant C r^2 \iint_{Q_{2r}} |\nabla v_2|^2.\]
\end{itemize}

\medskip

\textbf{Step 1 :} Taking $v_2$ as a test function in \eqref{camp v eq v2}. We obtain that 

\begin{multline}
\iint_{Q_{2r}} \partial_t v_2 v_2 + \iint_{Q_{2r}} H_1\nabla v_2\cdot \nabla v_2 +\iint_{Q_{2r}}  J_1\cdot \nabla v_2 v_2 \\
= \iint_{Q_{2r}} \mu v_2 - \iint_{Q_{2r}} (H-H_1) \nabla \widetilde{v} \cdot \nabla v_2 - \iint_{Q_{2r}} (J-J_1)\cdot \nabla \widetilde{v} v_2.
\end{multline} 
Let us denote this equation by $I +II +III = IV +V +VI$. We estimate each term separately. 

\begin{equation}\label{camp I}
I =\iint_{Q_{2r}} \partial_t v_2 v_2 = \int_{B_{2r}} \frac{1}{2} |v_2 (t_1, x)|^2 \geqslant 0. 
\end{equation}
Recall that $H \geqslant c >0$ on $\widetilde{\Omega}$. In particular, 

\begin{equation}\label{camp II}
II = \iint_{Q_{2r}} H_1 |\nabla v_2|^2 \geqslant c\iint_{Q_{2r}} |\nabla v_2|^2. 
\end{equation}
Using Young's inequality and the fact that $|v_2| = |\widetilde{v} -w| \leqslant 2$, 

\begin{multline}\label{camp III}
III= \iint_{Q_{2r}}  J_1 \nabla v_2 v_2 \leqslant \delta \iint_{Q_{2r}} |\nabla v_2|^2 + C_\delta \|J\|_{\infty,\widetilde{\Omega}} \iint_{Q_{2r}} |v_2|^2\\
 \leqslant \delta \iint_{Q_{2r}} |\nabla v_2|^2 + C_\delta \|J\|_{\infty,\widetilde{\Omega}} (2r)^{d+2}.
\end{multline}

\begin{equation}\label{camp IV}
IV = \iint_{Q_{2r}} \mu v_2 \leqslant 2  \iint_{Q_{2r}} |\mu| \leqslant 2 \|\mu \|_{\mathcal{L}^{1,d+2\alpha}} (2r)^{d+2\alpha}.
\end{equation}
Using Young's inequality, \autoref{lem : energie tilv} and the fact that $H \in \mathcal{C}^{1,1}(\widetilde{\Omega})$, 
\begin{multline}\label{camp V}
V=\iint_{Q_{2r}} (H-H_1) \nabla \widetilde{v} \cdot \nabla v_2 \leqslant \delta \iint_{Q_{2r}} |\nabla v_2|^2 + C_\delta \iint_{Q_{2r}} |H-H_1|^2 |\nabla \widetilde{v}|^2 \\
 \leqslant \delta \iint_{Q_{2r}} |\nabla v_2|^2 + C_\delta \|\nabla H\|_{\infty, \widetilde{\Omega}}^2 r^{d+2}. 
\end{multline}
%\leqslant \delta \iint_{Q_{2r}} |\nabla v_2|^2 + C_\delta \|\nabla H\|_\infty^2 |2r|^2 \iint_{Q_{2r}} |\nabla \widetilde{v}|^2 
Using that $J \in \mathcal{C}^{1,1}(\widetilde{\Omega})$,

\begin{multline}\label{camp VI}
VI =\iint_{Q_{2r}} (J-J_1)\cdot \nabla \widetilde{v} v_2 \leqslant \iint_{Q_{2r}} |v_2|^2 +\iint_{Q_{2r}} |J-J_1|^2 |\nabla \widetilde{v}|^2\\
\leqslant 4(2r)^{d+2} + \|\nabla J\|_{\infty, \widetilde{\Omega}}^2 (2r)^2 \iint_{Q_{2r}} |\nabla \widetilde{v}|^2 \leqslant C r^{d+2}.
\end{multline}
Combining \eqref{camp I}, \eqref{camp II}, \eqref{camp III}, \eqref{camp IV}, \eqref{camp V} and \eqref{camp VI} with $\delta >0$ small enough yields Step 1. 

\medskip

\textbf{Step 2 :} We now conclude. Let $\rho < r$. Observe that equation \eqref{camp v eq v2} satisfies the requirements of \autoref{prop : Poincare}. On the other hand, the function $w$ satisfies the following Campanato estimate : there exists a constant $C_{cam}$ such that 

\[\iint_{Q_\rho} |w- \{w\}_\rho|^2 \leqslant C_{cam}\left(\frac{\rho}{r}\right)^{d+4}\iint_{Q_{r}} |w -\{w\}_{r}|^2.\]

Using twice that $|a+b|^2 \leqslant 2a^2 +2b^2$, twice Poincare's inequality and Step 1, there exists a constant $C_P>0$ such that  

\begin{align*}
\iint_{Q_\rho} |\widetilde{v}- \{\widetilde{v}\}_{\rho,t_1,x_1}|^2&\leqslant 2 \iint_{Q_\rho} |w- \{w\}_{\rho,t_1,x_1}|^2 + 2\iint_{Q_\rho} |v_2- \{v_2\}_{\rho,t_1,x_1}|^2\\
&\leqslant 2 C_{Cam}\left(\frac{\rho}{r}\right)^{d+4}\iint_{Q_{r}} |w -\{w\}_{r,t_1,x_1}|^2 + C_P \rho^2 \iint_{Q_{2\rho}} |\nabla v_2|^2\\
&\leqslant 4C_{Cam}\left(\frac{\rho}{r}\right)^{d+4}\iint_{Q_{r}} |\widetilde{v} -\{\widetilde{v}\}_{r,t_1,x_1}|^2\\
&\quad\quad\quad \quad + 4C_{Cam}\left(\frac{\rho}{r}\right)^{d+4}\iint_{Q_{r}} |v_2 -\{v_2\}_{r}|^2 + C_P r^2 \iint_{Q_{2r}} |\nabla v_2|^2 \\
&\leqslant 4C_{Cam}\left(\frac{\rho}{r}\right)^{d+4}\iint_{Q_{r}} |\widetilde{v} -\{\widetilde{v}\}_{r,t_1,x_1}|^2\\
&\quad \quad \quad \quad  + 4C_{Cam}\cdot 1 \cdot C_P r^2 \iint_{Q_{2r}} |\nabla v_2|^2 +  C_P r^2 \iint_{Q_{2r}} |\nabla v_2|^2\\
&\leqslant C_1 \left(\frac{\rho}{r}\right)^{d+4}\iint_{Q_r} |\widetilde{v} -\{\widetilde{v}\}_{r,t_1,x_1}|^2  + C_2 r^{d+2+2\alpha}. 
\end{align*} 

This concludes the proof of \autoref{lem : campanato v} and the proof of \autoref{prop : uv holder}.
\end{proof}

\textbf{Acknowledgments.} The author would like to thank Jean-François Babadjian and Rémy Rodiac for their guidance during the writing of this paper and their careful feedbacks on the manuscript.

\bibliographystyle{alpha}
\bibliography{bibliography2}

\end{document}